\documentclass[11pt]{amsart}
\usepackage{graphicx}
\usepackage{latexsym}
\usepackage{amsfonts,amsmath,amssymb}
\newtheorem{theorem}{Theorem}[section]
\newtheorem{proposition}[theorem]{Proposition}
\newtheorem{lemma}[theorem]{Lemma}

\newtheorem{corollary}[theorem]{Corollary}



\def\si{\par\smallskip\noindent}
\def\bi{\par\bigskip\noindent}

\def\ex{\textup{E\/}}

\def\eps{\varepsilon}

\def\a{\alpha}
\def\be{\beta}

\def\ga{\gamma}
\def\part{\partial}
\def\Cal{\mathcal}

\usepackage{xcolor}

\newcommand{\beq}{\begin{equation}}
\newcommand{\eeq}{\end{equation}}

\theoremstyle{remark}

\numberwithin{equation}{section}
\linespread{1.1}
\date{\today}
\begin{document}

\title[Stable permutations]{One-sided version of Gale-Shapley proposal algorithm and its likely behavior under random
preferences.}

\author{Boris Pittel}
\address{Department of Mathematics, The Ohio State University, Columbus, Ohio 43210, USA}
\email{bgp@math.ohio-state.edu}
\keywords
{stable permutations, random preferences, asymptotics}

\subjclass[2010] {05C30, 05C80, 05C05, 34E05, 60C05}
\maketitle

\begin{abstract}  
For a two-sided ($n$ men/$n$ women) stable matching problem) Gale and Shapley studied a proposal algorithm (men propose/women
select, or the other way around), that determines a matching, not blocked by any unmatched pair. Irving used this algorithm as
a first phase of his algorithm for one-sided (stable roommates) matching problem with $n$ agents. We analyze a fully extended version of Irving's
proposal algorithm that runs all the way until either each agent holds a proposal or an agent gets rejected by everybody on the agent's
preference list.  It is shown that the terminal, directed, partnerships form a stable permutation with
matched pairs remaining matched  in any other stable permutation. A likely behavior of the proposal algorithm is studied under
assumption that all $n$ rankings are independently uniform. It is proved that with high probability (w.h.p.) every agent has a partner, and
that both the number of agents in cycles of length $\ge 3$ and the total number of stable
matchings are bounded in probability. 
W.h.p. the total number of proposals is asymptotic to $0.5 n^{3/2}$.
\end{abstract}

\section{Introduction and main results.} In 1962 Gale and Shapley \cite{GalSha} introduced and analyzed a game-theoretical model; 
it transcends the issues of college admissions and stable marriages they used to illuminate the ideas. There are two 
equinumerous sets of agents, $n$ ``men'' and $n$ ``women'', each agent ranking strictly the agents on the other side as potential 
``marriage partners''. The problem is to find a stable matching $M$ of two sides, i.e. a matching that cannot be destabilized by any unmatched
pair: formally, no unmatched pair $(m,w)$ is such that $m$ prefers $w$ to his partner $M(m)$ in $M$, and $w$ prefers $m$ to her partner
$M(w)$ in $M$. They discovered a remarkable ``proposal'' algorithm that delivers such a matching. In round $1$ men  select  (``propose'' to) 
their first choice women, and each selected woman provisionally puts on hold the best among the proposers, rejecting the others, if there
are any.  Recursively, at each round the rejected men propose to their next best choices, and selected women reject all men except the
currently best one. The process terminates when all women hold a proposal. Gale and Shapley proved that the terminal matching is stable,
and is man-optimal, meaning that each man gets the best stable partner. McVitie and Wilson \cite{McVWil} proved later that  this matching is woman-pessimal, i.e. each woman gets her worst stable husband. The situation is completely reversed when women propose, and each man rejects all but the currently best woman proposer. To quote from Gusfield and Irving \cite{GusIrv}, The analysis in \cite{McVWil} was based on an ``alternative formulation of algorithm, in which the men initiate their proposal sequences in a fixed order, and every rejection causes the rejected
man to make his next proposal immediately''.

These numbers-free combinatorial algorithms definitely called for an average case analysis, and it was Wilson \cite{Wil} who demonstrated
that the expected number of proposals for a uniformly random problem instance is bounded above by $nH_n$, $H_n=
1/1+\cdots+1/n\sim \log n$. He accomplished this feat by using an ingenious reduction of the proposal process for a classic coupon-collector
problem. 

Later Knuth \cite{Knu0} (\cite{Knu}) undertook a systematic study of the problem. In particular, he found a better upper bound $(n-1)H_n$ and a matching lower bound $nH_n-O(\log^4 n)$ for the expected number of proposals. Knuth posed a problem of estimating
$\Bbb E[S_n]$, $S_n$ being the total number of stable matchings, suggesting that this might be done by means of his integral formula
for the probability that a given matching is stable. This was done in \cite{Pit1} where we proved that $\Bbb E[S_n]\sim e^{-1} n\log n$. 
Subsequently Lennon and Pittel \cite{LenPit} extended the techniques in \cite{Pit1} to
show that $\ex[S_n^2]\sim (e^{-2}+0.5e^{-3})(n\log n)^2$. These two estimates together implied that $S_n$ is of order $n\log n$ with probability $0.84$, at least.

Extending the technique of integral formulas, we proved in \cite{Pit2} a ``law of hyperbola'': it states that with high probability (w.h.p.) (i.e. with probability $\ge 1-o(1)$)  the product of the total men's rank and the total women's rank in every stable matching is asymptotic to $n^3$.
In particular, in the men-optimal matching the women's rank is asymptotic to $n\log n$, while the men's rank is asymptotic to $n^2/\log n$,
with the bounds swapped for the women-optimal matching. Using this ``conservation law'' we showed that for the ``minimum-regret'' 
stable matching--the one that minimizes the largest rank of a partner--w.h.p. the men's rank and the women's rank are each asymptotic to $n^{3/2}$, or equivalently the average spouse rank is asymptotic to $n^{1/2}$. In fact, the worst spouse's rank
in this stable matching w.h.p. is of order $n^{1/2}\log n$, an evidence of how well balanced this matching is.

In a recent breakthrough paper Ashlagi, Kanoria and Leshno \cite{Ash} analyzed a random stable matching problem with {\it unequal\/}
numbers $n_1$ and $n_2$ of men and women, $n_2>n_1$ say. They discovered that the likely rank
of men and the likely rank of women do not depend, asymptotically, on which side proposes: e. g. they are $\sim n^2/\log n$ and $\sim n\log n$ respectively, for $n_1=n$ and $n_2=n+1$. This abrupt change of asymptotic behavior
is in contrast to a more moderate, but still sizable, change for the expected number of stable matchings: relative to the balanced case $n_1=n_2=n$, it falls down by a factor $\log^2 n$.

Viewed more broadly, the stable marriage problem is a special case of the one-sided stable matching problem (``roommates problem'') in which every agent from a group
of even cardinality $n$ ranks some of the other agents as potential partners. A matching is a partition of the set $[n]$ into $n/2$ pairs, and it is stable if no two unmatched agents prefer each other to their partners. Since
\cite{GalSha} it was known that not every problem instance has a solution, and Knuth \cite{Knu} asked whether there is a polynomial
time algorithm that finds a stable matching whenever it exists. Irving \cite{Irv0} answered Knuth's question positively by constructing such
an algorithm: its running time is of order $O(m)$ where $m$ is the total number of entries that identify acceptable partners in the
agents' preference lists; see also \cite{GusIrv}. We proved \cite{Pit2.4} that (for the uniformly random problem instance with complete preference
lists) the likely number of basic steps in Irving's algorithm, i.e. proposals in phase I and ``rotations'' in phase II, is of order $n\log n$. In a closely related paper \cite{Pit2.5} we showed that the expected number of stable matchings converges to $e^{1/2}$, in a sharp contrast to $e^{-1}n\log n$, the expected number of stable marriages for the two-sided matching problem.

To quote from Manlove \cite{Man}, ``considering the progress that has been made on SR [stable roommates] after $1989$, a key landmark is the work of Tan (and Hsueh)'' \cite{Tan1}, \cite{Tan2}, \cite{Tan3} ``on stable partitions. This structure...is present in every SR instance, and its existence is strong compensation for the fact that a stable matching need not exist''. Tan's  partition is
a partition of $[n]$, with $n$ not necessarily even, into a union of disjoint directed cycles, i.e. a permutation of $[n]$. 
Each agent's preference list is enlarged by adding one more entry at the end for the least favored option ``no partner''.  A permutation $\Pi$ is stable if {\bf (1)\/} each agent $i$ prefers $\Pi(i)$ (``successor'' of $i$) to $\Pi^{-1}(i)$ (``predecessor''
of $i$); {\bf (2)\/} for all $i,\,j\in [n]$, if $i$ prefers $j$ to $i$'s predecessor, then $j$ prefers $j$'s predecessor to $i$. Tan and Hsueh
\cite{Tan3} described a recursive algorithm that allows to find a stable partition of $[n]$ from a ``reduced'' stable partition of $[n-1]$
by application of a proposal sequence reminiscent of Gale and Shapley's algorithm for stable marriages and Irving's algorithm for the roommates problem. 

Irving and Pittel \cite{IrvPit} used this algorithm to prove that, for $n$ even, the probability that
there is a stable matching (i.e. a stable partition with all $n/2$ cycles of length $2$) is at most $0.5 e^{1/2}$. Extensive numerical experiments compelled Mertens \cite{Mer} to conjecture that this probability approaches zero as $n^{-1/4}$. The rigorous estimates in
\cite{Pit2.5} showed that this probability cannot approach zero faster than $n^{-1/2}$. In \cite{Pit5} we continued analysis of likely behavior of Tan's stable partitions. We showed that the expected number of reduced  stable partitions -- i.e. those consisting only
of matched pairs and odd cycles, the latter necessarily common to all stable partitions -- and the expected number of odd cycles
are $\sim \frac{\Gamma(1/4)}{\sqrt{\pi e}\, 2^{1/4}} n^{1/4}$ and $\lesssim   \frac{\Gamma(1/4)}{4\sqrt{\pi e}\, 2^{1/4}} n^{1/4}$
respectively. The $\log n$ factor aside, the estimates grow polynomially with $n$. 

We combined these estimates, and 
analysis of the variance of the number of reduced stable partitions with Tan and Hsueh's results to prove: {\bf (1)\/} the likely largest size of an
{\it internally\/} stable matching is at least  $n - n^{1/4-o(1)}$; {\bf (2)\/} for $n$ even, w.h.p. there  exists a complete matching
blocked by, at most, $n^{3/4}+o(1)$ unmatched pairs. The fractional powers of $n$ in {\bf (1,2)\/} can possibly be decreased, but it seems doubtful that $n^{1/4}$ and/or $n^{3/4}$ can be replaced by, say, $\log^a n$.

In this paper we study a one-sided proposal algorithm, which is the exact counterpart of the sequential McVitie-Wilson algorithm
for stable marriages.

\begin{theorem}\label{thmA} {\bf (1)\/} The terminal directed partnerships form a permutation
$\Pi$ which satisfies the condition
\begin{equation*} 
i\text{ prefers }\Pi(j)\text{ to }\Pi(i) \Longrightarrow \Pi(j)\text{ prefers }j\text{ to } i;\\
\end{equation*}
{\bf (2)\/} Calling such permutations stable, we have: $\Pi$ is stable only if $\Pi^{-1}$ is.

\noindent {\bf (3)\/} For each agent $i$, the terminal successor $\Pi(i)$ is the best stable successor, and the terminal
predecessor $\Pi^{-1}(i)$ is
the worst stable predecessor. Consequently all possible executions of the proposal algorithm yield the same stable permutation $\Pi_0$,
such that each $i$ prefers $\Pi_0(i)$ to $\Pi_0^{-1}(i)$. All pairs of agents matched in $\Pi_0$ remain matched in all other stable
permutations. Either the stable $\Pi$'s are all fixed-point-free, or they share a single fixed point.
\end{theorem}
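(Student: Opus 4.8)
The plan is to establish the five assertions in the order (1), then the successor-optimality half of (3), then (3c), then the predecessor half of (3), then the self-duality (2), and finally (3d), (3e), (3f); the later parts will drop out cheaply once the optimality statement is in hand. Beginning with (1), I would first note termination: each agent proposes strictly down its list (whose last entry is ``no partner''), so only finitely many proposals occur and at the end every agent has a well-defined \emph{target} (the agent currently holding its proposal). A degree count --- each agent points to exactly one target and holds at most one proposer, so all in- and out-degrees must equal one --- forces the terminal targets to form a permutation $\Pi$, with at most one agent left pointing at ``no partner'' and appearing as a fixed point (uniqueness of such a fixed point I defer to (3f)). For stability, suppose $i$ prefers $\Pi(j)$ to $\Pi(i)$. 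Since $i$ proposes in decreasing order and $\Pi(i)$ is $i$'s terminal target, $i$ must have proposed to the higher-ranked $\Pi(j)$ earlier and been rejected; but an agent rejects only while holding a strictly better proposer, and its held proposer only improves with time, so $\Pi(j)$ finally holds its predecessor $\Pi^{-1}(\Pi(j))=j$, whom it ranks at least as high as the proposer it held when it turned $i$ away --- and that one it already preferred to $i$. Hence $\Pi(j)$ prefers $j$ to $i$, which is exactly the stability condition.

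The engine for (3) is the one-sided analogue of man-optimality: \emph{no agent is ever rejected by a stable successor}, a stable successor of $x$ meaning an agent $\Sigma(x)$ for some stable $\Sigma$. I would prove this by induction on the order of rejections. Suppose the first offending rejection is of $i$ by $a=\Sigma(i)$; it occurs because $a$ holds a proposer $i'$ it strictly prefers to $i$. As $i'$ currently targets $a$, every agent $i'$ ranks above $a$ has already rejected $i'$, and by the inductive hypothesis none of those was a stable successor, so $\Sigma(i')\preceq_{i'}a$, strictly unless $i'=i$ (impossible, since $a$ prefers $i'$ to $i$). Feeding ``$i'$ prefers $\Sigma(i)=a$ to $\Sigma(i')$'' into the stability of $\Sigma$ (with $j=i$) yields ``$a$ prefers $i$ to $i'$'', contradicting the choice of $i'$. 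Granting the claim, every agent that $i$ ranks above $\Pi(i)$ has rejected $i$ and so is not a stable successor; thus $\Pi(i)$ is the \emph{best} stable successor. This characterization makes no reference to the execution, which gives (3c) and the common output $\Pi_0$.

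Predecessor-pessimality (3b) is the exact dual and uses (3a) together with stability: if some stable $\Sigma$ gave an agent $a$ a predecessor $q=\Sigma^{-1}(a)$ that $a$ likes less than its terminal holder $p=\Pi_0^{-1}(a)$, then $p$'s best stable successor is $a$, so $p$ weakly prefers $a=\Sigma(q)$ to $\Sigma(p)$; stability of $\Sigma$ then forces $a$ to prefer $q$ to $p$, a contradiction. For the self-duality (2) I would merely rewrite the condition: substituting a free variable for $\Pi(j)$ and taking the contrapositive turns ``$i$ prefers $\Pi(j)$ to $\Pi(i)\Rightarrow\Pi(j)$ prefers $j$ to $i$'' into the \emph{verbatim} stability condition for $\Pi^{-1}$, with successor and predecessor trading roles; hence $\Pi$ is stable iff $\Pi^{-1}$ is.

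The remaining statements follow by a uniform squeezing trick powered by (2). For (3d): since $\Pi_0^{-1}$ is stable, $\Pi_0^{-1}(i)$ is a stable \emph{successor} of $i$, hence weakly dominated by the best one $\Pi_0(i)$; so $i$ prefers $\Pi_0(i)$ to $\Pi_0^{-1}(i)$, strictly off the $2$-cycles. For (3e): if $\{i,j\}$ is a $2$-cycle of $\Pi_0$, then $j$ is at once $i$'s best stable successor and worst stable predecessor, so for any stable $\Sigma$, best-successor applied to the stable $\Sigma^{-1}$ gives $\Sigma^{-1}(i)\preceq_i j$ while worst-predecessor applied to $\Sigma$ gives $\Sigma^{-1}(i)\succeq_i j$; the squeeze forces $\Sigma^{-1}(i)=j$, i.e. $\Sigma(j)=i$, and symmetrically $\Sigma(i)=j$, so the pair stays matched. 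For (3f): a fixed point $i$ of $\Pi_0$ has ``no partner'' as its best stable successor, and since that is everyone's least option, $\Sigma(i)=i$ in every stable $\Sigma$; while two fixed points $i,i'$ would each prefer the other to being alone, violating stability, so there is at most one --- giving the stated dichotomy. I expect the genuine obstacle to be the first-violation induction of (3a): one must argue that ``the first rejection by a stable successor'' is well defined across all interleavings of proposals and that the inductive hypothesis yields precisely the inequality $\Sigma(i')\preceq_{i'}a$ needed to invoke stability. Every later assertion is then a short deduction from (3a), (3b) and the self-duality (2).
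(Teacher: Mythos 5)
Your overall route is the paper's route: the stability of the terminal permutation via monotonicity of the held proposer (the paper's Lemma \ref{lem2}), the ``first rejection by a stable successor'' induction for successor-optimality (this is verbatim the paper's Lemma \ref{lem3}, and your explicit treatment of the sub-case $\Sigma(i')=a$, which forces $i'=i$, is if anything more careful than the paper's), predecessor-pessimality derived from successor-optimality plus stability of $\Sigma$ (the paper's Lemma \ref{lem4}, reached by the contrapositive but logically the same), the substitution argument for $\Pi\mapsto\Pi^{-1}$ (the paper's Note (c)), and the squeeze arguments for $\Pi_0$-preference and for fixed pairs (the paper's Corollary \ref{1}). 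Two steps, however, are genuinely incomplete as written.

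First, in part (1) the degree count does not by itself force a permutation. The dangerous configuration is an agent $z$ rejected by everybody (so $z$ points at ``no partner'': out-degree $0$) that nevertheless still \emph{holds} a proposer (in-degree $1$), compensated by some agent $u$ that has a target but holds nobody; the in- and out-degree sums still agree, yet the terminal digraph is then a path plus cycles, not a permutation, and $z$ cannot be adjoined as a fixed point. Excluding this is precisely the content of the paper's Lemma \ref{lem1} ($S_t=P_t$), and it needs an argument beyond counting: if $z$ has been rejected by all others, then every other agent received a proposal from $z$, and an agent that ever holds a proposal holds one forever; hence all $n-1$ other agents are terminal successors, so together with $z$ we would get $|S_t|=n=|P_t|$, contradicting $z\notin P_t$. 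The same argument (not your deferral to (3f)) is also what shows at most one agent can end up unmatched; the deferral is circular, since the stability-based uniqueness argument in (3f) presupposes that the terminal structure already is a permutation.

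Second, (3f) proves only half of the stated dichotomy. You show that a fixed point of $\Pi_0$ is inherited by every stable $\Sigma$, and that a single stable permutation has at most one fixed point; but you never exclude the case where $\Pi_0$ is fixed-point-free while some \emph{other} stable $\Sigma$ has a fixed point. Successor-optimality is vacuous for this direction, because every option is weakly preferred to ``no partner'', so the tool you invoke cannot close it. The missing step is the paper's Corollary \ref{1.5}, and it uses the worst-predecessor property you did establish: if $\Sigma(i)=i$ for a stable $\Sigma$, then $\Sigma^{-1}(i)=i$ is a stable predecessor of $i$ equal to the bottom option, and since $\Pi_0^{-1}(i)$ is the \emph{worst} stable predecessor it must also be the bottom option, i.e.\ $\Pi_0^{-1}(i)=i$, so $i$ is a fixed point of $\Pi_0$ as well. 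With these two insertions your argument is complete and coincides with the paper's proof.
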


We study a likely behavior of the proposal algorithm and the stable permutations under the assumption that all $n$ rankings (i.e. preference lists) are independently
uniform. Here is a summary of our probabilistic claims.

{\bf Definition.\/} A permutation $\Pi$ is called $\Pi_0$-like if it is stable, fixed-point-free and such that each $i$ prefers $\Pi(i)$ to
$\Pi^{-1}(i)$.

\begin{theorem}\label{thmB}  For a stable $\Pi$, let $M(\Pi)$ stand for the total number of matched pairs, and let $R_s(\Pi)$ and $R_p(\Pi)$ stand for the total rank of successors and the total rank of predecessors. {\bf (a)\/}
\begin{align*}
&\Bbb P\bigl(\Pi\!\text{'s have no fixed point}\bigr)\ge 1-\exp\bigl(-n^{1/2+o(1)}\bigr);\\
&\Bbb E\Bigl[\min_{\Pi}M(\Pi)\Bigr]=n/2-O(1);\\
&\Bbb E\bigl[\#\!\text{ of }\Pi_0\!\text{-like permutations}\bigr]=O(1).
\end{align*}
{\bf (b)\/} W.h.p.  $R_s(\Pi),\,R_p(\Pi)= 0.5 n^{3/2}(1+O(n^{-1/2}\log n))$ for all $\Pi\!$'s.  
\end{theorem}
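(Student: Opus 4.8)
The plan is to combine the structural dichotomy supplied by Theorem~\ref{thmA} with first- and second-moment estimates based on Knuth-type integral formulas for the probability that a prescribed fixed-point-free permutation, carrying prescribed local rank data, is stable. Throughout I write $r_i(j)$ for the rank of $j$ on $i$'s list, and I record the elementary but decisive observation that for a $2$-cycle $(i,j)$ both $R_s$ and $R_p$ receive the \emph{same} contribution $r_i(j)+r_j(i)$; consequently $R_s(\Pi)$ and $R_p(\Pi)$ can differ only through cycles of length $\ge 3$. The abstract's assertion that the total number of proposals is $\sim 0.5\,n^{3/2}$ is, up to lower-order terms, the statement $R_s(\Pi_0)=\sum_i r_i(\Pi_0(i))\sim 0.5\,n^{3/2}$, since in the one-sided proposal algorithm agent $i$ proposes exactly down to position $r_i(\Pi_0(i))$. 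This quantity, together with Theorem~\ref{thmA}(3), will drive both parts of Theorem~\ref{thmB}.

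For part (a) I would argue as follows. Because Theorem~\ref{thmA}(3) guarantees at most one fixed point common to all stable permutations, the indicator of a fixed point has expectation equal to its probability, so a first-moment bound suffices. If $i$ is a fixed point then the stability condition, applied with the trivial preference of $i$ over ``no partner'', forces every other agent $k$ to prefer its predecessor $\Pi^{-1}(k)$ to $i$; since predecessor ranks are w.h.p. $O(n^{1/2}\log n)$, this requires $i$ to sit below position $\sim n^{1/2}$ on all $n-1$ remaining lists, an event of probability $(1-\Theta(n^{-1/2}))^{\,n}=\exp(-\Theta(n^{1/2}))$. Summing over the $n$ candidates and absorbing the polylogarithmic slack gives the bound $1-\exp(-n^{1/2+o(1)})$. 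For $\Bbb E[\min_\Pi M(\Pi)]=n/2-O(1)$ I would show that the expected number of agents lying in a cycle of length $\ge 3$ in \emph{some} stable permutation is $O(1)$: expanding over cycle length $\ell\ge 3$ and cyclic position, the integral formula for the stability probability of an $\ell$-cycle glued to a stable configuration on the remaining agents sums to $O(1)$, exactly as the number of rotations was controlled in \cite{Pit2.4},\cite{Pit5}. The same integral-formula first moment, now taken over oriented fixed-point-free permutations, yields $\Bbb E[\#\,\Pi_0$-like permutations$]=O(1)$, paralleling the convergence to $e^{1/2}$ in \cite{Pit2.5}.

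For part (b), the rigidity extracted in part (a) reduces the problem to the extremal permutation. Since $\Pi_0(i)$ is the best and $\Pi_0^{-1}(i)$ the worst stable successor/predecessor, every stable $\Pi$ obeys $R_s(\Pi)\ge R_s(\Pi_0)$ and $R_p(\Pi)\le R_p(\Pi_0)$, while the symmetry $\Pi\mapsto\Pi^{-1}$ of Theorem~\ref{thmA}(2) identifies the worst-predecessor extreme with a best-successor one and gives $R_p(\Pi)=R_s(\Pi^{-1})$. The central quantitative step is then to prove $R_s(\Pi_0)=0.5\,n^{3/2}\bigl(1+O(n^{-1/2}\log n)\bigr)$ w.h.p., which I would establish by analyzing the proposal process: the number of successful proposals is exactly $n$, acceptances occur at a rate proportional to the current best rank over $n$, and the resulting balance pins the typical terminal position at $\sim 0.5\,n^{1/2}$; a second-moment (or martingale) estimate supplies the relative error $O(n^{-1/2}\log n)$. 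Finally, because only $O(1)$ agents are non-rigid, reshuffling them among stable permutations alters $R_s$ and $R_p$ by $o(n\log n)$, which lies within the claimed absolute error $O(n^{3/2}\cdot n^{-1/2}\log n)=O(n\log n)$; this transfers the concentration from $\Pi_0$ to all $\Pi$ simultaneously and also forces $|R_s(\Pi)-R_p(\Pi)|$ to be of lower order.

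The main obstacle is the sharp constant-and-error statement $R_s(\Pi_0)=0.5\,n^{3/2}(1+O(n^{-1/2}\log n))$: the heuristic balance giving the exponent $3/2$ and the position scale $n^{1/2}$ is transparent, but extracting the precise leading constant $0.5$ together with a concentration window as tight as $O(n^{-1/2}\log n)$ will require either a delicate differential-equation/martingale tracking of the proposal front or a careful asymptotic evaluation of the integral formula for the law of $R_s(\Pi_0)$, including control of the lower-order corrections from the $O(1)$ agents in long cycles. A secondary difficulty is making the no-fixed-point estimate rigorous and uniform, namely handling the dependence of the predecessor ranks $\Pi^{-1}(k)$ on the whole configuration when passing from the heuristic $(1-\Theta(n^{-1/2}))^{\,n}$ to an unconditional bound.
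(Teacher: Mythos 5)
Your outline correctly identifies the skeleton (rigidity from Theorem~\ref{thmA}, first moments via integral formulas for the middle two claims of part (a), transfer of concentration from $\Pi_0$ to all stable $\Pi$ via the $O_P(1)$ non-fixed agents), but the two estimates that actually carry the theorem are left as acknowledged holes, and for one of them your proposed route cannot work even in principle. For the no-fixed-point bound, your plan is to condition on the event that predecessor ranks are $O(n^{1/2}\log n)$ and then multiply $(1-\Theta(n^{-1/2}))^{n}$ over the other agents. This is circular (the rank concentration is part (b), proved only for stable permutations of the unconditioned instance, not for the configuration conditioned on $i$ being isolated), and, more decisively, it destroys the claimed rate: the rank-concentration event holds only with probability $1-o(1)$, where the $o(1)$ decays far slower than $\exp(-n^{1/2+o(1)})$ (in the paper it comes from bounds like $(0.78)^{\omega(n)}$ with $\omega(n)\to\infty$ arbitrarily slowly). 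Any argument that first discards an event of such failure probability can never yield a bound of the form $1-\exp(-n^{1/2+o(1)})$. The paper's proof is an entirely separate, self-contained algorithmic argument: deferred decisions plus Wilson's ``amnesiac proposer'' device, a coupon-collector lower bound $D_{n-1}\gtrsim (1-\eps) n\log n$ on the number of proposals (using negative association of occupancy numbers), and log-concavity of $f(z)=1-(z+1)^{-1}$ to bound $\prod_i\bigl(1-\tfrac{1}{D_{\nu,i}+1}\bigr)$ by a function of $\nu$ alone, after which a split at $\nu\approx n^{3/2}$ gives $\exp(-n^{1/2-o(1)})$. Nothing in your sketch supplies a substitute for this mechanism.

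The second gap is the central quantitative step of part (b), $R_s=0.5\,n^{3/2}(1+O(n^{-1/2}\log n))$, which you defer to ``a delicate differential-equation/martingale tracking of the proposal front'' and explicitly flag as the main obstacle. The paper does not analyze the process at all here; it uses the two-variable generating-function identity
$\Bbb E\bigl[\Bbb I(\Pi)\xi^{R_s(\Pi)}\eta^{R_p(\Pi)}\boldsymbol|\bold x,\bold y\bigr]
=\xi^n\eta^n\prod(\overline x_i\overline z_j+\xi x_i\overline z_j+\eta\overline x_i z_j)$
from Lemma~\ref{lem6}, takes Chernoff-type bounds in $\xi$, and -- crucially -- restricts to the region already certified in part (a) (namely $n-l\le\omega(n)$, $|s-(l-1)^{1/2}|\le(2\log n)^{1/2}$, $\max_i x_i\le 2.03\,n^{-1/2}\log n$), which makes it legitimate to use a suboptimal tilt $\xi_k(s,s_1)\sim 2k/n^{3/2}$ depending only on $s,s_1$; expanding $h_k$ around $\xi_k$ then yields a gain $\exp(-\tfrac16 n^{1/2}\log^2 n)$ for $k$ outside $0.5n^{3/2}\pm n\log n$. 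Also note the paper's statement is proved for \emph{all} $\Pi_0$-like $\Pi$ directly (using $R_s(\Pi)\le R_p(\Pi)$ pointwise for such $\Pi$), not by first isolating $\Pi_0$; your transfer argument via $|R_s(\Pi)-R_p(\Pi)|\le n(n-\mathcal L(\Pi_0))$ matches the paper's step (II) and is fine, but it does not relieve you of producing the concentration estimate itself. As it stands, both load-bearing estimates of the theorem are missing.
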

\noindent Thus the newly defined stable permutations are likely to consist of about $n/2$ matched pairs and possibly some cycles of length $3$
or more, that have a bounded expected total length. W.h.p.  the total number of proposals in the algorithm is $0.5 n^{3/2}(1+O(n^{-1/2}\log n))$. To compare, for Tan's stable permutations the bounds are (\cite{Pit5})
\[
\Bbb E\Bigl[\min\limits_{\Pi} M(\Pi)\Bigr]\le n/2 - c_1 n^{1/4},\quad \Bbb E\bigl[\#\!\text{ of }\Pi\text{'s}\bigr]\sim c_2 n^{1/4}.
\]

\section{Proposal algorithm and stable permutations.} We have a set of $n$ agents, each {\it strictly\/} ranking the remaining $n-1$ agents, and we interpret
an agent's ranking of other agents as a preference list for a social partner. Assume  that each agent, free to propose, may propose to
one  agent only. By a common convention, we increase the length of each agent's  preference list to $n$, reserving the last slot
for the least favored option ``agent has no partner''. Here is a sequential proposal algorithm for determination of a system of pairwise, directed partnerships,
naturally aligned with individual agents' preferences. This algorithm is a counterpart of the well-known bipartite proposal algorithm
(McVitie, Wilson) that finds a stable matching between two sides (``men'' and ``women'').

Order agents arbitrarily. At every step we have a set $P$ of agents (``predecessors''), each being put on hold by (attached to) a single agent  from  a equinumerous set $S$ of agents (``successors'').  There is also a set $U$ of unattached agents, 
each not yet rejected by all other agents. If $U\neq \emptyset$,  an unattached agent $u$ proposes to the $u$'s best choice agent $v$ among those who haven't rejected $u$ earlier.  $v$ accepts the proposal if either $v\notin S$ or if $v\in S$, but $v$ prefers $u$ to its current predecessor $w$. $w$ joins the set of unattached agents, provided that there remain
agents who haven't rejected $w$ already. Otherwise $v$ rejects $u$ and $u$ rejoins the set of unattached agents, if there are still agents $u$ hasn't proposed to. The process terminates as soon as $U=\emptyset$.

Let $P_t$ and $S_t$ denote the terminal predecessor set and the terminal successor set, respectively.
\begin{lemma}\label{lem1} $S_t=P_t$ and $|S_t|\in \{n-1,n\}$.
\end{lemma}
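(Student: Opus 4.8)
The plan is to track two structural invariants of the held partnerships and then run a short counting argument at the terminal state. At every step the accepted proposals form a system of directed edges $u\to v$ (with $u\in P$ a predecessor and $v\in S$ its successor) in which each $v\in S$ holds exactly one predecessor and each $u\in P$ points to exactly one successor; thus the partnerships define a bijection between $P$ and $S$, and in particular $|P|=|S|$ throughout, whence $|P_t|=|S_t|$. I would record this first, verifying that it is preserved by each of the three possible moves: acceptance by a free $v$ increases both $|P|$ and $|S|$ by one; a trade, in which $v$ swaps its predecessor $w$ for a preferred proposer $u$, deletes $w$ from and inserts $u$ into $P$ while leaving both sizes fixed; and an outright rejection changes nothing.

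Next I would establish the monotonicity of $S$: once an agent enters $S$ it never leaves, since the only way a successor can lose its current predecessor is a trade, which simultaneously installs a strictly preferred new one. Hence $S_t$ is exactly the set of agents that ever receive an accepted proposal. I would also observe that, because the process halts only when $U=\emptyset$, at termination every agent either lies in $P_t$ or has been rejected by all $n-1$ other agents; denoting this (possibly empty) set of ``fully rejected'' agents by $X$, we have $[n]=P_t\sqcup X$ and $|P_t|=n-|X|$.

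The crux is the claim $|X|\le 1$ together with the identification of $X$ with $[n]\setminus S_t$. The key elementary fact is that a proposal of $a$ to $c$ is rejected only if $c$ already belongs to $S$ at that moment, since a free $c$ always accepts; by monotonicity such a $c$ then lies in $S_t$. Consequently, if $a\in X$ then every $c\ne a$ rejected $a$ at some stage, so $[n]\setminus\{a\}\subseteq S_t$. Were two distinct agents $a,b$ both in $X$, this would force $S_t=[n]$, hence $|P_t|=|S_t|=n$ and $P_t=[n]$, contradicting $a\notin P_t$; thus $|X|\le 1$ and $|S_t|=|P_t|\in\{n-1,n\}$. If $X=\emptyset$ then $P_t=[n]=S_t$; if $X=\{a\}$ then the inclusion $[n]\setminus\{a\}\subseteq S_t$ combined with $|S_t|=n-1$ forces $S_t=[n]\setminus\{a\}=P_t$. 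Either way $S_t=P_t$. The one delicate point I anticipate is the bookkeeping behind ``rejected by everyone'': I must make sure that when a predecessor $w$ is displaced by a trade this is correctly counted as $w$ having been rejected by that successor, so that an agent enters $X$ precisely when its entire list is exhausted. With monotonicity of $S$ in hand, this is exactly the step that makes the rejection-implies-successor argument airtight.
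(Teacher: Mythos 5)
Your proof is correct and uses essentially the same ingredients as the paper's: the bijection between $P$ and $S$ giving $|P_t|=|S_t|$, monotonicity of $S$, the observation that a rejection can only be issued by an agent currently in $S$ (hence in $S_t$), and the resulting counting contradiction when two agents are fully rejected. The only difference is packaging --- you bound the fully-rejected set $X$ first and then identify $S_t=P_t$ by cases, whereas the paper first shows $S_t\setminus P_t=\emptyset$ and then bounds the complement --- which is immaterial.
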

\begin{proof} First of all, $|S_t|=|P_t|$, since the algorithm determines not only $S_t$ and $P_t$, but also a bijective
mapping from $P_t$ to $S_t$.
Also, once an agent $v$ receives a proposal, it holds a proposal afterward, so that $v\in S_t$. Suppose $s\in S_t\setminus P_t$, so that 
$s$ has been rejected
by all other $n-1$ agents.  Then those agents are all terminal successors, i.e.  $|S_t|=1+(n-1)=n$, implying that $|S_t|=|P_t|=n$; contradiction. Therefore $S_t\setminus P_t=\emptyset$, meaning that every terminal successor is a terminal predecessor. So, using $|S_t|=|P_t|$, we obtain $S_t=P_t$.

Further, $[n]\setminus S_t=n\setminus P_t$ is the set of agents $s$ such that $s$  has been rejected by all $v\neq s$. Such a set may contain at most one agent.
\end{proof}

The bijection is a fixed-point free permutation of $S_t=P_t$. If $|S_t|=n-1$, we add the single outsider as a cycle of length $1$, and obtain a permutation, $\Pi_0$, of $[n]$ with a single fixed point.

{\bf Definition.\/}  We call a permutation $\Pi$ of $[n]$ stable if there is no pair $i,\,j$, $(i\neq j,\,\Pi(j))$,  
such that $i$ prefers $\Pi(j)$  to $\Pi(i)$ and $\Pi(j)$ prefers $i$ to $j$. 

{\bf Notes.\/} {\bf (a)\/} If $i$ is a fixed point of a stable $\Pi$ then for each $j\neq i$, $\Pi(j)$ prefers $j$ to $i$, whence $\Pi(j)\neq j$
and therefore  $i$ is unique. {\bf (b)\/} If $n$ is even and $\Pi$ is a permutation with $n/2$ cycles of length $2$, then $\Pi$ is stable if and only if the $n/2$ pairs of agents form a stable matching. {\bf (c)\/} Let $Q=\Pi^{-1}$. Observe that $j\neq i, Q(i)$ is equivalent to 
$\Pi(Q(j))\neq Q(i), \Pi(Q(i))$. Substituting $Q(i):=\Pi^{-1}(i)$ and $Q(j):=\Pi^{-1}(j)$ instead of $i$ and $j$ into the stability condition for $\Pi$, we obtain: there is no pair $i,\,j$ , $(j\neq i,\,Q(i))$, such that $Q(i)=\Pi^{-1}(i)$ prefers $\Pi(Q(j))=j$ to $\Pi(Q(i))=i$ and $j$ prefers $Q(i)=\Pi^{-1}(i)$ to $Q(j)=\Pi^{-1}(j)$.  And this is the stability condition for $Q=\Pi^{-1}$.
So $\Pi$ is stable if and only if $\Pi^{-1}$ is.

\begin{lemma}\label{lem2} Both $\Pi_0$ and $\Pi_0^{-1}$ are stable.
\end{lemma}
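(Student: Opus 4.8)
The plan is to prove that $\Pi_0$ is stable and then obtain the stability of $\Pi_0^{-1}$ for free from Note (c), which has already shown that a permutation is stable precisely when its inverse is. Thus the whole lemma reduces to verifying the stability condition for $\Pi_0$ itself, i.e.\ to ruling out a ``blocking pair'': a pair $i,j$ with $i\neq j,\Pi_0(j)$ for which $i$ prefers $\Pi_0(j)$ to $\Pi_0(i)$ and $\Pi_0(j)$ prefers $i$ to $j$.

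The first step is to isolate two monotonicity features implicit in the run of the algorithm. (i) Each agent proposes strictly down its own preference list, since a reactivated unattached agent always proposes to its best remaining choice among those who have not yet rejected it. Hence at termination, an agent $i$ with terminal successor $\Pi_0(i)$ has proposed to every agent it ranks above $\Pi_0(i)$, and has subsequently been released (rejected or bumped) by each of them, because it ends up held by the less preferred $\Pi_0(i)$. (ii) A successor's held predecessor only improves in that successor's own ranking: a proposal is accepted only when the proposer is preferred to the current predecessor, and once an agent receives a proposal it remains a successor forever (the very fact used in the proof of Lemma~\ref{lem1}). Consequently, for any successor $v$ its terminal predecessor $\Pi_0^{-1}(v)$ is at least as good, in $v$'s ranking, as every predecessor $v$ ever held.

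With these in hand, suppose toward a contradiction that $\Pi_0$ admits a blocking pair $(i,j)$, and write $v:=\Pi_0(j)$, so that $\Pi_0^{-1}(v)=j$; thus $i$ prefers $v$ to $\Pi_0(i)$ while $v$ prefers $i$ to $j$. By (i), since $i$ ranks $v$ above its terminal successor, agent $i$ proposed to $v$ at some step and was later released by $v$; by the acceptance/rejection rule this happens only when $v$ holds a predecessor $w$ that $v$ prefers to $i$. By (ii), $v$'s terminal predecessor $j$ is preferred by $v$ to that $w$, so $v$ prefers $j$ to $i$, contradicting the blocking condition. Hence no blocking pair exists and $\Pi_0$ is stable; applying Note (c) then finishes the proof.

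This single chain also disposes of the one awkward case, the lone fixed point that occurs when $|S_t|=n-1$: if $v=\Pi_0(j)=j$, step (i) still forces $i$ to have proposed to $v$, whence $v$ would have received a proposal and so would lie in $S_t$, contradicting $v\in[n]\setminus S_t$. I expect the part demanding the most care to be the rigorous extraction of the monotonicity properties (i) and (ii) from the informal description of the algorithm---in particular tracking that ``released'' covers both immediate rejection and later bumping---together with the bookkeeping identity $\Pi_0^{-1}(\Pi_0(j))=j$ that lets one read off a successor's terminal predecessor directly from the permutation. Once these are pinned down, the contradiction is a one-line comparison of preferences.
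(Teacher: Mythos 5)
Your proof is correct and follows essentially the same route as the paper's: you show that if $i$ prefers $\Pi_0(j)$ to its terminal successor then $i$ must have proposed to and been released by $\Pi_0(j)$, and since the terminal predecessor $j$ is $\Pi_0(j)$'s favorite among all its proposers, $\Pi_0(j)$ prefers $j$ to $i$; the stability of $\Pi_0^{-1}$ then follows from Note (c), exactly as the paper intends. Your explicit treatment of the fixed-point case (where $\Pi_0(j)=j$ has received no proposals, contradicting that $i$ proposed to it) is a small but welcome addition that the paper's terser argument leaves implicit.
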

\begin{proof} Suppose $i$ prefers $\Pi_0(j)$ to $\Pi_0(i)$. Then agent $i$, before proposing to $\Pi_0(i)$, proposed to $\Pi_0(j)$ and subsequently was rejected by $\Pi_0(j)$. $j$ is the terminal  predecessor of $\Pi_0(j)$, whence $j$ is the $\Pi_0(j)$'s favorite among all
agents who proposed to $\Pi_0(j)$. So $\Pi_0(j)$ prefers $j$ to $i$.
\end{proof}

{\bf Note.\/} The stability condition arose from contemplating which changes of two agents' preferences
might have resulted
in a  different terminal permutation. The definition of a stable permutation differs from the one suggested by Tan: $\Pi$ is stable if {\bf (a)\/} there is no pair $(i\neq j)$ such that  $i$ prefers 
$j$ to $\Pi^{-1}(i)$ and $j$ prefers $i$ to $\Pi^{-1}(j)$, and {\bf (b)\/} each $i$ prefers $\Pi(i)$ to $\Pi^{-1}(i)$; see \cite {Tan1}, \cite{Tan2} and \cite{Tan3}. 

To compare: applying our stability condition to $j=\Pi^{-2}(i)\neq i$, we obtain that for each
cycle $C$, $|C|\ge 3$, of a stable $\Pi$, there are only two options: either $i$ prefers $\Pi(i)$ to $\Pi^{-1}(i)$
for all $i\in C$, or $i$ prefers $\Pi^{-1}(i)$ to $\Pi(i)$ for all $i\in C$. 


\begin{lemma}\label{lem3} All possible executions of the proposal algorithm yield the same stable permutation $\Pi_0$, and  for each agent $i$, $\Pi_0(i)$ is the best of the successor-partners $\Pi(i)$ the agent $i$ can have in any stable permutation $\Pi$.
\end{lemma}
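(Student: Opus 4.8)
The plan is to mimic the classical proof of man-optimality for the Gale--Shapley algorithm, adapted to the one-sided, directed setting. Call a successor $v$ \emph{feasible} for an agent $i$ if there is a stable permutation $\Pi$ with $\Pi(i)=v$. Since in every execution each agent proposes to successors strictly in the order of its preference list, moving down only when rejected, the terminal successor $\Pi_0(i)$ is exactly $i$'s most preferred successor among those that never reject $i$. By Lemma~\ref{lem2} the permutation $\Pi_0$ is itself stable, so $\Pi_0(i)$ is feasible for $i$; hence, to prove that $\Pi_0(i)$ is the best feasible successor of $i$ (and, a fortiori, that it is independent of the execution), it suffices to establish the following.

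\smallskip
\noindent\emph{Key claim.} Throughout any execution of the proposal algorithm, no agent is ever rejected by a successor that is feasible for it.

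\smallskip
Granting the claim, every successor that $i$ prefers to $\Pi_0(i)$ and that rejected $i$ is infeasible, so $\Pi_0(i)$ is the best feasible successor of $i$; since the set of feasible successors depends only on the preference lists, all executions terminate with the same permutation $\Pi_0$, as asserted. I would prove the key claim by induction on the rejections, ordered as they occur in the execution. Suppose some agent is rejected by a feasible successor, and consider the \emph{first} such event: agent $i$ is rejected by $v$, where $v$ is feasible for $i$, witnessed by a stable $\Pi$ with $\Pi(i)=v$. A rejection of $i$ by $v$ means that at that moment $v$ holds a predecessor $i'$ that it strictly prefers to $i$; in particular $i'\neq i$ and $i'\neq v$. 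The agent $i'$ reached $v$ by proposing to it, so $v$ is $i'$'s best choice among successors that had not yet rejected $i'$, which forces every successor $i'$ prefers to $v$ to have already rejected $i'$, all of these rejections occurring strictly before the current event. Because the current event is the \emph{first} rejection by a feasible successor, the induction hypothesis applies to those earlier rejections, so each such successor is infeasible for $i'$. Consequently the feasible successor $v':=\Pi(i')$ is not one that $i'$ prefers to $v$; moreover $v'\neq v$, since $\Pi(i')=\Pi(i)$ would force $i'=i$. Hence $i'$ prefers $v=\Pi(i)$ to $v'=\Pi(i')$.

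Now I apply the stability condition of $\Pi$ with $i'$ in the role of the ``proposer'': since $i'$ prefers $\Pi(i)$ to $\Pi(i')$, stability forces $\Pi(i)=v$ to prefer $i$ to $i'$. This contradicts the fact that $v$ prefers $i'$ to $i$, which is precisely why it rejected $i$. The contradiction proves the key claim and hence the lemma. I expect the delicate points to be, first, the bookkeeping that guarantees the induction hypothesis covers exactly the rejections $i'$ suffered before its proposal to $v$ (so that all successors $i'$ prefers to $v$ are certified infeasible), and, second, the correct matching of variables when invoking the stability condition. The fixed-point ``no partner'' slot requires only the trivial remark that an agent cannot be rejected by it, so it plays no role in the induction.
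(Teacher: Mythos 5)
Your proof is correct and is essentially the paper's own argument: both locate the \emph{first} occasion in an execution at which an agent is rejected by a stable (in your terms, feasible) successor, show that the preferred rival $i'$ must prefer that successor to its own stable successor $\Pi(i')$ (since otherwise $i'$ would have suffered an earlier such rejection), and then invoke the stability condition of $\Pi$ to contradict the rejection. The only difference is cosmetic---your ``rejected by a feasible successor'' phrasing versus the paper's ``rejects its stable predecessor'' viewpoint---so nothing further is needed.
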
 

\begin{proof} Let $E$ be an arbitrary execution of the proposal algorithm, and let $\Pi$ be the corresponding stable permutation. 



We need to show that during $E$ no agent rejects its stable predecessor in any stable permutation $\Pi'$. Suppose this is not true, and consider
the first occasion when an agent  $j'$ rejects its stable predecessor, i.e. an agent $i=(\Pi')^{-1}(j')$ for some stable $\Pi'$.

There are two ways this can happen.  

(1)  $i$ proposed to $j'$ when $j'$ already held  proposal from an agent $i'$, preferred by $j'$ to $i$. 

(2) $j'$ already held proposal from $i$ when a better $i'$ proposed to $j'$.
\si
\noindent In either case, if $i'$ prefers $\Pi'(i')$ to $j'=\Pi'(i)$ then, prior to proposing to $j'$, $i'$ must have proposed to, and then must have been rejected by, its stable successor  $\Pi'(i')$.  Contradiction of ``the first occasion'' assumption. Therefore $i'$ prefers $j'=\Pi'(i)$ to $\Pi'(i')$. Since $\Pi'$ is
stable, we obtain that $j'$ prefers $i$ to $i'$. Contradiction of ``$j'$ rejects $i$ in favor of $i'$'' supposition.

So {\it independently\/} of $E$, $\Pi(i)$ is the best stable successor for each $i\in [n]$.
\end{proof}

Combining Lemma \ref{lem2} and Lemma \ref{lem3} we get
\begin{corollary}\label{1} Each $i\in [n]$ prefers $\Pi_0(i)$ to $\Pi_0^{-1}(i)$. Consequently: {\bf (a)\/}  
a fixed point of $\Pi_0$, if there is one, is a fixed point of every other stable permutation $\Pi$; {\bf (b)\/} if $i$ and $j$ form a match
in $\Pi_0$, i.e. $j=\Pi_0(i),\,i=\Pi_0(j)$, then $i$ and $j$ are the only stable partners of each other, so $i$ and $j$ form a match
in any other stable permutation $\Pi$. That is, the set of fixed pairs is the set of matches in $\Pi_0$.
\end{corollary}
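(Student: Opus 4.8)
The plan is to derive the opening assertion and parts (a), (b) entirely from the best-successor property (Lemma~\ref{lem3}), the stability of both $\Pi_0$ and $\Pi_0^{-1}$ (Lemma~\ref{lem2}), and the equivalence ``$\Pi$ stable $\iff\Pi^{-1}$ stable'' recorded in Note~(c).

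First I would settle the opening statement. By Lemma~\ref{lem3}, $\Pi_0(i)$ is the best successor $i$ can have over all stable permutations; by Lemma~\ref{lem2}, $\Pi_0^{-1}$ is itself stable, so $\Pi_0^{-1}(i)$ is the successor of $i$ in the stable permutation $\Pi_0^{-1}$. Being a stable successor, $\Pi_0^{-1}(i)$ cannot beat the best one, so $i$ (weakly) prefers $\Pi_0(i)$ to $\Pi_0^{-1}(i)$. For part~(a), suppose $\Pi_0(i^\ast)=i^\ast$. This says the best stable successor of $i^\ast$ is the option ``no partner'', which by the convention of Section~2 is last on every preference list. If some stable $\Pi$ had $\Pi(i^\ast)\neq i^\ast$, then $i^\ast$ would strictly prefer the genuine partner $\Pi(i^\ast)$ to ``no partner'', contradicting that ``no partner'' is already $i^\ast$'s best stable successor; hence $\Pi(i^\ast)=i^\ast$ for every stable $\Pi$, and Note~(a) guarantees this is the only fixed point.

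The substance is part~(b), and the decisive preliminary step is to observe that the family $\mathcal S$ of stable permutations is closed under inversion (Note~(c)), so $\Pi\mapsto\Pi^{-1}$ is a bijection of $\mathcal S$ onto itself. Consequently, for each agent $a$ the set of its stable predecessors $\{\Pi^{-1}(a):\Pi\in\mathcal S\}$ coincides with the set of its stable successors $\{\Pi(a):\Pi\in\mathcal S\}$; in particular the \emph{best} stable predecessor of $a$ equals the best stable successor, namely $\Pi_0(a)$ by Lemma~\ref{lem3}. I would isolate this as the key lemma feeding part~(b).

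Now take a matched pair $\Pi_0(i)=j,\ \Pi_0(j)=i$ and an arbitrary stable $\Pi$, and suppose toward a contradiction that $\Pi(i)\neq j$. Since $j$ is $i$'s best stable successor and $\Pi(i)\neq j$, agent $i$ strictly prefers $j$ to $\Pi(i)$. Writing $k=\Pi^{-1}(j)$, we have $\Pi(k)=j$ and $k\neq i$, and the pair $(i,k)$ is admissible for the stability condition since $i\neq k$ and $i\neq\Pi(k)=j$. As $i$ prefers $\Pi(k)=j$ to $\Pi(i)$, stability of $\Pi$ forces $\Pi(k)=j$ to prefer $k$ to $i$. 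But $k$ is a stable predecessor of $j$, while $i=\Pi_0(j)$ is $j$'s best stable predecessor by the preceding paragraph, so $j$ must (weakly) prefer $i$ to $k$ — a contradiction. Hence $\Pi(i)=j$ for every stable $\Pi$; applying this to the stable permutation $\Pi^{-1}$ gives $\Pi^{-1}(i)=j$, i.e.\ $\Pi(j)=i$, so $i,j$ remain a matched pair throughout $\mathcal S$, and the ``$\Longleftarrow$'' direction (a fixed pair is matched in $\Pi_0$) is immediate. The main obstacle, and the step I would verify most carefully, is the inversion-closure identification of best predecessor with best successor: it is what upgrades the one-sided optimality of Lemma~\ref{lem3} into the two-sided rigidity needed to pin down $\Pi(j)$ as well as $\Pi(i)$, and it must be paired with a check that the pair $(i,k)$ is genuinely admissible before the stability condition is invoked.
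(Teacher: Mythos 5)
Your proof is correct and follows essentially the same route as the paper, which states the corollary as an immediate consequence of ``combining Lemma \ref{lem2} and Lemma \ref{lem3}'': you use Lemma \ref{lem3} for the best-successor property and the closure of stable permutations under inversion (Lemma \ref{lem2} / Note (c)) to transfer that optimality to predecessors, exactly the combination the paper intends. Your explicit verification of the admissibility of the pair $(i,k)$ before invoking the stability condition, and the identification of best stable predecessor with best stable successor, are faithful elaborations of the details the paper leaves unstated.
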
  




\begin{lemma}\label{lem4} For each agent $i\in [n]$, $\Pi_0^{-1}(i)$ is the worst predecessor that $i$ can have in any 
stable permutation $\Pi$.
\end{lemma}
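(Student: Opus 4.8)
The plan is to mirror the classical stable-marriage argument that the proposing side's optimal assignment is the other side's pessimal one, converting a hypothetical failure of the worst-predecessor property into a blocking pair. I fix an agent $i$ and an arbitrary stable permutation $\Pi$, and write $j_0=\Pi_0^{-1}(i)$ and $j=\Pi^{-1}(i)$. That $\Pi_0^{-1}(i)$ is the worst predecessor means precisely that for every stable $\Pi$, the agent $i$ weakly prefers $j$ to $j_0$. So I would argue by contradiction, assuming $j_0\neq j$ and that $i$ prefers $j_0$ to $j$, and aim to violate the stability of $\Pi$.

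First I would dispose of the fixed-point case. If $j_0=i$, then $i$ is a fixed point of $\Pi_0$, and by Corollary \ref{1}(a) it is a fixed point of every stable permutation, forcing $j=\Pi^{-1}(i)=i=j_0$ and contradicting $j_0\neq j$. Hence I may assume $j_0\neq i$. Next I would extract a preference of $j_0$ from the best-successor property: since $\Pi_0(j_0)=i$, Lemma \ref{lem3} says that $i$ is the best successor $j_0$ can have in any stable permutation, so $j_0$ weakly prefers $i$ to its actual successor $\Pi(j_0)$; and because $j_0\neq j=\Pi^{-1}(i)$ we have $\Pi(j_0)\neq i$, making the preference strict, i.e. $j_0$ prefers $i$ to $\Pi(j_0)$.

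Finally I would assemble the blocking pair. Instantiating the stability condition for $\Pi$ with the roles $(i,j)\mapsto(j_0,j)$, I note that $\Pi(j)=i$ and that the side constraints $j_0\neq j$ and $j_0\neq\Pi(j)=i$ both hold. The two clauses of the blocking condition are then exactly the facts already in hand: $j_0$ prefers $\Pi(j)=i$ to $\Pi(j_0)$, and $\Pi(j)=i$ prefers $j_0$ to $j$. This contradicts the stability of $\Pi$, so no stable $\Pi$ can give $i$ a predecessor less preferred than $j_0$, establishing that $\Pi_0^{-1}(i)$ is $i$'s worst stable predecessor.

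The step I expect to require the most care is the bookkeeping rather than the logic: matching the direction of every preference to the precise asymmetric form of the stability definition (distinguishing successor from predecessor, and checking the constraints $i\neq j,\Pi(j)$), and correctly handling the degenerate fixed-point / ``no partner'' slot so that the strict-versus-weak distinctions coming out of Lemma \ref{lem3} are invoked legitimately.
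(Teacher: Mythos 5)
Your proof is correct and is essentially the paper's own argument: both assume some agent strictly prefers its $\Pi_0$-predecessor $j_0$ to its predecessor $j$ in a stable $\Pi$, and both play the best-successor property (Lemma \ref{lem3}) against the stability condition of $\Pi$ applied to exactly the pair $(j_0,j)$. The only difference is the direction of the final contradiction (you use Lemma \ref{lem3} to exhibit a pair blocking $\Pi$, while the paper uses stability of $\Pi$ to contradict Lemma \ref{lem3}), plus your more explicit handling of the fixed-point and strictness side conditions, which the paper leaves implicit.
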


\begin{proof} Suppose that there is an agent $\a$ and a stable permutation $\Pi_1$ such that $\a$ (strictly) prefers $\Pi_0^{-1}(\a)$
to $\Pi_1^{-1}(\a)$, or setting $i=\Pi_0^{-1}(\a)$, $j=\Pi_1^{-1}(\a)$, that $\Pi_1(j)$ prefers $i$ to $j$. Since $\Pi_1$
is stable, we then have that $i$ prefers $\Pi_1(i)$ to $\Pi_1(j)$, or equivalently that $\Pi_0^{-1}(\a)$ prefers $\Pi_1(\Pi_0^{-1}(\a))$ to $\a$, or setting $\be=\Pi_0^{-1}(\a)$, that $\be$ prefers $\Pi_1(\beta)$ to $\Pi_0(\be)$. This is in contradiction
of Lemma \ref{lem3}.
\end{proof} 

\begin{corollary}\label{1.5} If $\Pi_0$ is fixed point--free, then so is every other stable $\Pi$.
\end{corollary}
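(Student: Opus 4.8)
The plan is to argue by contradiction, leveraging the predecessor-optimality already recorded in Lemma \ref{lem4}. Suppose $\Pi_0$ is fixed-point-free but some stable permutation $\Pi$ does possess a fixed point, say $\Pi(a)=a$. Then $a$ is simultaneously its own successor and its own predecessor, i.e. $\Pi^{-1}(a)=a$ as well. The crucial observation is that, under the standing convention that each preference list ends with the ``no partner'' option, an agent being its own predecessor means having the strictly least-preferred predecessor that agent $a$ could conceivably have.

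Next I would invoke Lemma \ref{lem4}, which asserts that $\Pi_0^{-1}(a)$ is the worst predecessor agent $a$ can have across all stable permutations. Applied to the stable permutation $\Pi$, this says that $a$ ranks $\Pi_0^{-1}(a)$ no higher than $\Pi^{-1}(a)=a$. But $a$ (the terminal ``no partner'' slot) is by construction the unique least-preferred entry on $a$'s list, so the only element that $a$ ranks no higher than $a$ is $a$ itself. Hence $\Pi_0^{-1}(a)=a$, i.e. $a$ is a fixed point of $\Pi_0$, contradicting the hypothesis that $\Pi_0$ is fixed-point-free. This contradiction shows that no stable $\Pi$ can have a fixed point.

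The only point requiring care — and what I expect to be the main, if modest, obstacle — is getting the direction of preference exactly right when translating ``worst stable predecessor'' into an inequality: one must be sure that the fixed point, read as $a$ being matched to its terminal ``no partner'' option, is genuinely the bottom of the list, so that worst-predecessor optimality forces the equality $\Pi_0^{-1}(a)=a$ rather than merely a weak inequality. One could alternatively phrase the argument through Lemma \ref{lem3} using best successors, but the predecessor route via Lemma \ref{lem4} is cleaner, since $a=\Pi^{-1}(a)$ is automatically extremal. It is worth noting that this statement is precisely the complement of Corollary \ref{1}(a): there a fixed point of $\Pi_0$ was shown to propagate to every stable $\Pi$, whereas here the \emph{absence} of a fixed point in $\Pi_0$ is shown to propagate as well, so that together they establish that all stable permutations share the same fixed-point status claimed in Theorem \ref{thmA}(3).
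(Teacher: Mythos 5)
Your proof is correct and is essentially the paper's own argument: both rest on Lemma \ref{lem4} (worst stable predecessor) together with the observation that a fixed point $a=\Pi^{-1}(a)$ is the bottom ``no partner'' entry of $a$'s list, forcing $\Pi_0^{-1}(a)=a$. The only difference is presentational — you argue by contradiction, while the paper states the contrapositive directly.
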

\begin{proof} If a stable $\Pi$ has a fixed point $i$, then $\Pi^{-1}(i)=i$ is the worst stable predecessor of $i$, whence $i=\Pi_0^{-1}(i)$ as well,
meaning that $i$ is a fixed point of $\Pi_0$ also.
\end{proof}

{\bf Note.\/}  Lemma \ref{lem3} and Lemma \ref{lem4} are non-bipartite counterparts  of Theorem 1.2.2 and Theorem 1.2.3 (Gusfield and Irving \cite{GusIrv}) for the bipartite
stable matchings: ``Men'' propose to ``women'', and at each step a woman selects a favorite from at most two suitors, the current proponent and a man, if there is one, that she put on hold earlier.

\section{Random stable partitions.} To generate a uniformly random instance $I_n$ of the
$n$ agents' preference lists we introduce an array of the independent random variables $X_{i,j}\,(1\le i\neq j\le n)$, each
distributed uniformly on $[0,1]$. We assume that each agent $i$ ranks the remaining agents $j\neq i$ in increasing order of the variables $X_{i,j}$. Such an ordering is uniform for every $i$, and the orderings by $n$ agents are all independent.
\subsection{A fixed point is very unlikely.} 
\begin{theorem}\label{thm1} $\Bbb P_n:=\Bbb P(\Pi_0\text{ has a fixed point})\le \exp\bigl(-n^{1/2-o(1)}\bigr)$.
\end{theorem}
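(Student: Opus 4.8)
The plan is to bound the probability that $\Pi_0$ has a fixed point by a first-moment (union bound) argument over which agent is the fixed point. By Lemma~\ref{lem1}, $\Pi_0$ has a fixed point precisely when exactly one agent $s$ is rejected by all other $n-1$ agents during the algorithm, so that $|S_t|=n-1$. Since the terminal permutation $\Pi_0$ is execution-independent (Lemma~\ref{lem3}), I would fix a candidate agent $s$ and estimate $\Bbb P(s\text{ is rejected by everybody})$, then multiply by $n$ and sum. The key probabilistic content is that for $s$ to be a fixed point, every other agent $v\neq s$ must, at termination, hold a proposal from someone it strictly prefers to $s$; in particular each such $v$ must have received and retained a proposer ranked above $s$ on $v$'s list.

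The main idea I would use to get a stretched-exponential bound is a principal-hypotheses/deferred-decision coupling of the proposal process with independent uniform ranks $X_{i,j}$. First I would run the algorithm in the McVitie--Wilson sequential form described in Section~2, revealing ranks only as proposals are made, so that the ranks not yet queried remain fresh uniforms. To force $s$ to be rejected by all, I would track, for each agent $v$, the rank $X_{v,s}$ that $s$ must beat; the crucial observation is that $s$ being universally rejected requires that at termination every $v$ holds a partner it prefers to $s$, i.e. with smaller $X_{v,\cdot}$ value than $X_{v,s}$. I would then argue that the number of proposals received by the agents is typically of order $n^{3/2}$ (consistent with Theorem~\ref{thmB}(b)), but to rule out a fixed point one needs a large deviation: the event that \emph{all} $n-1$ agents simultaneously end up matched above $s$ is extremely costly. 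Quantitatively, I expect the bound to come from showing that the conditional probability that a given ``threshold'' $X_{v,s}$ is beaten behaves like a product over $v$ whose logarithm is of order $-n^{1/2}$, yielding $\Bbb P(s\text{ fixed})\le \exp(-n^{1/2-o(1)})$ and then $\Bbb P_n\le n\exp(-n^{1/2-o(1)})=\exp(-n^{1/2-o(1)})$.

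Concretely, I would set up a truncation at a level $a=a_n$ (a slowly growing parameter, something like $a\asymp n^{1/2}$ up to logarithmic factors) and split into two contributions. On one hand, if $s$'s threshold ranks $X_{v,s}$ are not too small for too many $v$, then for $s$ to be rejected each such $v$ must acquire a partner beating a non-negligible threshold; the probability that a Poisson/binomial-type count of successful high-quality proposals is large enough for all $v$ decays like a stretched exponential. On the other hand, the atypical event that $s$ sits near the top of many agents' lists (so the thresholds $X_{v,s}$ are tiny, making rejection ``easy'') is itself rare, again with a stretched-exponential probability because it requires $\Omega(n^{1/2})$ of the independent uniforms $X_{v,s}$ to be simultaneously small. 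Balancing the truncation level against these two decaying probabilities is what produces the exponent $n^{1/2}$.

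The hard part will be making the deferred-decision coupling rigorous while retaining genuine independence: the proposal process correlates the ranks that have been examined, so one must carefully argue that the relevant threshold comparisons $X_{v,s}$ versus $v$'s retained partner rank can be dominated by, or coupled to, independent uniform random variables, and that conditioning on the history of the algorithm does not destroy the product structure needed for the large-deviation estimate. A secondary difficulty is controlling the $o(1)$ in the exponent uniformly in the choice of truncation level $a_n$, so that optimizing over $a_n$ genuinely yields $n^{1/2-o(1)}$ rather than a weaker power; this will require quantifying the two competing error terms (too many low thresholds versus failure of universal rejection) sharply enough that their crossover sits at the $n^{1/2}$ scale.
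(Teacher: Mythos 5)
Your skeleton matches the paper's: union bound over the identity of the fixed point, deferred decisions inside the sequential algorithm, and the requirement that every agent $v\neq s$ terminally holds a proposal it prefers to $s$. But the quantitative core has a fatal gap: you never use the second, equally essential, consequence of ``$s$ is a fixed point,'' namely that \emph{no proposal is ever addressed to $s$ during the entire run} (once an agent receives a proposal it holds one forever --- see the proof of Lemma \ref{lem1} --- so a single proposal to $s$ would make $s$ a terminal successor). In the paper this requirement produces the factor $(1-n^{-1})^{\nu}$ in \eqref{1.1}, where $\nu$ is the total number of amnesiac proposals, and it is indispensable: conditional on $\nu$ proposals allocated as $(D_{\nu,i})$, the probability that every $i$ prefers the best of its $D_{\nu,i}$ proposers to $s$ is $\prod_i\bigl(1-\tfrac{1}{D_{\nu,i}+1}\bigr)\le \exp\bigl(-\tfrac{(n-1)^2}{\nu+n-1}\bigr)$, which is $e^{-\Omega(\sqrt n)}$ only when $\nu=O(n^{3/2})$; when $\nu\asymp n^2$ it is $\approx e^{-1}$, so ``universal rejection of $s$'' is \emph{not} extremely costly in that regime. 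The exponent $1/2$ comes exactly from balancing the two costs, $e^{-\nu/n}$ against $e^{-(n-1)^2/(\nu+n-1)}$, whose product is at most $e\,e^{-2(n-1)/\sqrt n}$ uniformly in $\nu$. Your substitute for this --- invoking Theorem \ref{thmB}(b) to say $\nu$ is typically $\approx 0.5\,n^{3/2}$ --- cannot close the hole: that statement carries only a polynomially small exceptional probability, so after the union bound over the $n$ candidates $s$ you would be left with a trivial $O(1)$ bound rather than $e^{-n^{1/2-o(1)}}$, and a stretched-exponential tail bound for $\nu$ is not available (the avoidance factor is precisely what makes one unnecessary).

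Second, your truncation on the thresholds $X_{v,s}$ is oriented backwards, and is in any case not needed. Since agents rank in \emph{increasing} order of the $X$'s, a tiny $X_{v,s}$ puts $s$ near the top of $v$'s list and makes it \emph{hard}, not easy, for $v$ to reject $s$; moreover, at the relevant scale, having $\Theta(n^{1/2})$ of the $X_{v,s}$ below $n^{-1/2}$ is the \emph{typical} situation (the mean count is about $n^{1/2}$), not a stretched-exponentially rare one. The $e^{-\Theta(\sqrt n)}$ decay actually arises because universal rejection forbids \emph{every} $v$ from having $X_{v,s}$ below the best of its received ranks, an event of probability $\prod_v\bigl(1-\Theta(1/D_{\nu,v})\bigr)$. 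The paper gets this with no case analysis at all: integrating out $X_{v,s}$ yields exactly $1-\tfrac{1}{D_{\nu,v}+1}$ per agent, and log-concavity of $z\mapsto 1-\tfrac{1}{z+1}$ bounds the product by its value at the average load $\nu/(n-1)$, giving the exponential bound above regardless of the allocation. Finally, the conditional-independence worry you flag at the end is resolved in the paper by Wilson's amnesiac device combined with the choice that $s$ is the last agent to make its first proposal, so that on the event in question the whole process evolves inside $[n-1]$; without some such device your ``thresholds versus retained partners'' comparison does not obviously retain a product structure.
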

\begin{proof} Our argument is based on probabilistic analysis of the proposal algorithm, rather than the $\{X_{i,j}\}$-based model, which will be heavily relied upon later. We  use a so-called principle of deferred decisions (\cite{Knu0}, \cite{Knu},\cite{KnuMotPit}).
It is postulated that the random preference system is not given in advance,
but rather unfolds step by step, to the extent necessary for a full run of the proposal algorithm. At each step an agent $i$, whose turn it is to propose, proposes
to a $j\neq i$ agent chosen uniformly at random among the agents $i$ has not proposed to
so far. If it is the $k$-th proposal to $j$  then $i$'s rank (relative to the group of $k$ suitors) is distributed uniformly at random on the set $\{1,\dots k\}$.
In particular, $i$'s proposal is the best so far with probability $1/k$, in which case $j$ puts $i$ on hold and, if $k>1$, rejects the current suitor.
On the event $A_i:=\{i\text{ is a fixed point of }\Pi_0\}$ the process stops when $i$ gets rejected by the last of the other $(n-1)$ agents, who have formed a fixed-point-free permutation on $[n]\setminus\{i\}$ without making a single proposal to $i$. By the
union bound and symmetry, we have $\Bbb P_n\le n\Bbb P(A_n)$, so that we need to show that  $\Bbb P(A_n)\le 
\exp\bigl(-n^{1/2-o(1)}\bigr)$.

For the proof we adapt Wilson's idea \cite{Wil} and assume that as proposers the agents are ``amnesiacs'', while
as a proposee each agent $j$ keeps a continuously updated record of all $j$'s {\it distinct\/} proposers so far. That is, a current proposer $i$
selects an agent $j\neq i$ uniformly at random among {\it all\/} $(n-1)$ agents, and $i$ is put on hold -- while the current suitor,  if there is
one, is rejected -- if and only if $i$ is the best potential partner so far for $j$. Assuming that proposal protocol is ``last rejected/first to propose'',
we see that deleting redundant proposals we get back to a genuine proposal algorithm with the uniformly random selections among agents
still accessible to the current proposer. Notice that we are still free to pick an order in which the agents make their first proposals.  To
bound $\Bbb P(A_n)$ we may and will assume that agent $n$ is the last to make the first proposal. In that case, on the event $A_n$, 
Wilson's process  evolves exclusively on $[n-1]$, with each of the agents in $[n-1]$ eventually receiving at least one proposal, and preferring their terminal proposers to agent $n$. 

Wilson used this device to show that the total number of proposals in the random instance of the $n\times n$ stable marriage problem is below $D_n$, the number of draws in a classic coupon-collector problem, thus showing  that the expected number of proposals is below $\Bbb E[D_n]=nH_n\sim n\log n$, $(H_n=1/1+\cdots+1/n)$. For our proof we need to know that with high probability (w.h.p.) $D_n\ge N_n:=\lfloor(1-\eps)n\log n\rfloor$. Observe that $D_n<d$ means that in the 
$d$ balls/ $n$ boxes uniform allocation scheme all $n$ occupancy numbers are positive. Since the occupancy numbers are negatively
associated (Dubhashi and Ranjan \cite{DubRan}) we see that
\begin{align*}
&\qquad\quad\Bbb P(D_n<N_n)\le \Bigl[1-(1-n^{-1})^N\Bigr]^n\le \exp\Bigl[-n(1-n^{-1})^N\Bigr]\\
&=\exp\Bigl[-n\exp\bigl(N\log(1-n^{-1})\bigr)\Bigr]=\exp\Bigl[-\exp\bigl(\log n - N/n +O(Nn^{-2})\bigr)\Bigr]\\
&\qquad\quad\le \exp\Bigl[-\exp\bigl(\eps\log n+O(n^{-1}\log n)\bigr)\Bigr]=\exp\bigl(-n^{\eps+O(n^{-1})}\bigr).
\end{align*}

By the discussion above, and the union bound, we have
\begin{equation}\label{1.1}
\begin{aligned}
\Bbb P(A_n)&\le \exp\bigl(-(n-1)^{\eps+O(n^{-1})}\bigr) \\
&+\sum_{\nu\ge N_{n-1}}\!\!(1-n^{-1})^{\nu}\,\Bbb E\Biggl[\,\prod_{i\in [n-1]}\Bbb I\bigl(D_{\nu,i}\ge 1\bigr)\Bigl(1-\frac{1}{D_{\nu,i}+1}\Bigr)\Biggr].
\end{aligned}
\end{equation}
{\it Explanation.\/} {\bf (a)\/} $(1-n^{-1})^{\nu}$ is the probability that each of $\nu$ consecutively chosen agents is from $[n-1]$.
$D_{\nu,i}$ is the  number of {\it all\/} proposals received by agent $i$ in the $\nu$-long sequence of proposals, so that
$\sum_{i\in [n-1]}D_{\nu,i}=\nu$, and $1-(D_{\nu,i}+1)^{-1}$
is an upper bound for the conditional probability that agent $i$  prefers the best of the $D_{\nu,i}$ proposers to agent $n$. It is indeed only an upper bound since some of the $D_{\nu,i}$ agents may well be ``repeat-proposers''.

Now, the function $f(z)=1-(z+1)^{-1}$ is log-concave on $(0,\infty)$. Therefore
\begin{multline*}
\prod_{i\in [n-1]}\Bbb I\bigl(D_{\nu,i}\ge 1\bigr)\Bigl(1-\frac{1}{D_{\nu,i}+1}\Bigr)
\le \Biggl(1-\frac{1}{\frac{1}{n-1}\sum_{i\in [n-1]}\bigl(D_{\nu,i}+1\bigr)}\Biggr)^{n-1}\\
=\Biggl(1-\frac{n-1}{\nu+n-1}\Biggr)^{n-1}\le \exp\Biggl(-\frac{(n-1)^2}{\nu+n-1}\Biggr).
\end{multline*}
So the bound \eqref{1.1} becomes
\[
\Bbb P(A_n)\le \exp\bigl(-(n-1)^{\eps+O(n^{-1})}\bigr) +\sum_{\nu\ge 0}\,\, \exp\Biggl(-\frac{\nu}{n}-\frac{(n-1)^2}{\nu+n-1}\Biggr).
\]
Splitting the sum into two sub-sums, for $\nu\le \lfloor n^{3/2}\rfloor$ and $\nu> \lfloor n^{3/2}\rfloor$ respectively, we obtain
that both sub-sums are of order $O\bigl( e^{-n^{1/2-o(1)}}\bigr)$. So, picking $\eps=1/2$, we obtain that $\Bbb P(A_n)\le e^{-n^{1/2-o(1)}}$.
\end{proof}

\subsection{Integral formulas for probabilities/expectations.} Here we will use the $\{X_{i,j}\}$-induced preference system to derive
the integral formulas for the leading probabilities.

Given a fixed-point-free permutation $\Pi$, let $\mathcal C_2=\mathcal C_2(\Pi)$ and $\mathcal C_3=\mathcal C_3(\Pi)$ denote the set of agents from the cycles of $\Pi$ with length $2$, and with length $3$ or more, respectively. Introduce:  
{\bf (a)\/} $E_1=E_1(\Pi)$
the set of all ordered pairs $\{i,j\}$ with $i,\,j\in \mathcal C_2 $, ($i\neq j,\,\Pi(j))$, and $E_1^*=\bigl\{\{i,j\}\in E_1: i<\Pi(j)\bigr\}$; 
$E_1^*$ is a maximal subset of $E_1$ which--for every $\{i,j\}\in E_1$--contains exactly one of $\{i,j\}$ and $\{\Pi(j),\Pi(i)\}(\in E_1$ as well).
 {\bf (b)\/} $E_2=E_2(\Pi)$ the set of
all ordered pairs $\{i,j\}$, ($i\neq j,\,\Pi(j))$, with at least one of  $i$, $j$ from $\mathcal C_3$.
\begin{lemma}\label{lem5}
Let $\Pi$ have no fixed point.  Denoting $\bold x=\{x_i\}_{i\in [n]},\, \bold y=\{y_j\}_{j\in\mathcal C_3}$, and $D=\{(\bold x, \bold y): \bold x\in [0,1]^{n}, \bold y\in [0,1]^{|\mathcal C_3|}\}$, 
we have
\begin{equation}\label{1}
\begin{aligned}
&\qquad\Bbb P(\Pi):=\Bbb P(\Pi\text{ is stable})=\int\limits_{(\bold x, \bold y)\in D}\!\!\!\Bbb P(\Pi|\bold x,\bold y)\,d\bold x\,d\bold y,\\
&\Bbb P(\Pi|\bold x,\bold y):=\!\!\!\!\prod_{\{i,j\}\in E_1^*\cup E_2}\!\!\!(1-x_iz_j),
\,\,\,z_j=\left\{\begin{aligned}&y_j,&&j\in \mathcal C_3,\\
&x_{\Pi(j)},&&j\in \mathcal C_2.\end{aligned}\right.
\end{aligned}
\end{equation}
In addition, the admissible $\bold x,\bold y$ satisfy the condition: for each cycle $C$ with $|C|\ge 3$, either $y_j<x_{\Pi(j)}$ $(\forall\,j\in C)$, or $y_j> x_{\Pi(j)}$ $(\forall\,j\in C)$. 
\end{lemma}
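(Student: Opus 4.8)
The plan is to derive the integral formula for $\Bbb P(\Pi)$ by translating the combinatorial stability condition into a statement about the underlying uniform random variables $X_{i,j}$, and then integrating out the irrelevant coordinates. The stability condition (from the Definition) says there is no pair $i,j$ with $i\neq j,\Pi(j)$ such that $i$ prefers $\Pi(j)$ to $\Pi(i)$ and $\Pi(j)$ prefers $i$ to $j$. In terms of the $X$'s, ``$i$ prefers $\Pi(j)$ to $\Pi(i)$'' means $X_{i,\Pi(j)}<X_{i,\Pi(i)}$, and ``$\Pi(j)$ prefers $i$ to $j$'' means $X_{\Pi(j),i}<X_{\Pi(j),j}$. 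So $\Pi$ is stable exactly when, for every admissible ordered pair $\{i,j\}$, the \emph{bad} event $\{X_{i,\Pi(j)}<X_{i,\Pi(i)}\}\cap\{X_{\Pi(j),i}<X_{\Pi(j),j}\}$ fails.

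First I would condition on the ``diagonal'' variables that record how good each agent's assigned partner is, namely $x_i:=X_{i,\Pi(i)}$ for every $i\in[n]$, together with the auxiliary variables $y_j$ for $j\in\mathcal C_3$ that are needed to handle the asymmetry inside long cycles (for a $2$-cycle the relevant threshold is already captured by $x_{\Pi(j)}$, which is why $z_j=x_{\Pi(j)}$ there, whereas for $j\in\mathcal C_3$ a genuinely independent variable $y_j$ is required). Conditioned on these values, each forbidden event involves an \emph{off-diagonal} variable $X_{i,\Pi(j)}$ and another off-diagonal variable $X_{\Pi(j),i}$. The key independence observation is that distinct forbidden pairs $\{i,j\}$ and $\{\Pi(j),\Pi(i)\}$ reference the same pair of off-diagonal entries, which is precisely why one keeps only the representative set $E_1^*$ (to avoid double counting) on the $\mathcal C_2$ part, while all of $E_2$ survives because long cycles have no such symmetry collapsing.

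Next I would compute, for a single admissible pair, the conditional probability that the bad event \emph{fails}. Given $x_i$ and $z_j$, the two off-diagonal uniforms $X_{i,\Pi(j)}$ and $X_{\Pi(j),i}$ are independent and uniform on $[0,1]$, so the probability that $X_{i,\Pi(j)}<x_i$ \emph{and} $X_{\Pi(j),i}<z_j$ is exactly $x_i z_j$, and hence the probability the pair is \emph{not} blocking is $1-x_iz_j$. Because across the full collection $E_1^*\cup E_2$ each off-diagonal variable is used in at most one factor (this is what the careful bookkeeping of $E_1^*$ guarantees), these events are conditionally independent and the conditional survival probability factorizes as $\prod_{\{i,j\}\in E_1^*\cup E_2}(1-x_iz_j)$, giving $\Bbb P(\Pi\mid\bold x,\bold y)$. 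Integrating over $(\bold x,\bold y)\in D$ with respect to Lebesgue measure (the density of the conditioned uniforms) yields the claimed formula, and the constraint that the diagonal variables respect the monotone ordering inside each long cycle produces the stated side condition $y_j<x_{\Pi(j)}$ for all $j\in C$ or $y_j>x_{\Pi(j)}$ for all $j\in C$, which is the $X$-translation of the ``either all prefer successor or all prefer predecessor'' dichotomy noted after Lemma~\ref{lem2}.

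The main obstacle I expect is the combinatorial bookkeeping that justifies the \emph{conditional independence} of the factors: one must verify that no off-diagonal variable $X_{a,b}$ appears in two different factors of the product, so that after conditioning on the diagonal the events truly decouple. This requires checking that the map $\{i,j\}\mapsto(\{i,\Pi(j)\},\{\Pi(j),i\})$ sending a forbidden pair to the two entries it uses is injective on $E_1^*\cup E_2$, and that the two $2$-cycle pairs $\{i,j\}$ and $\{\Pi(j),\Pi(i)\}$ genuinely coincide as unordered sets of underlying variables (so retaining only one representative is correct), while for $E_2$ no such identification occurs. Getting the definitions of $E_1^*$ and the substitution $z_j$ to line up exactly with the forbidden events — and confirming that the side condition on long cycles is compatible with (rather than in conflict with) the integrand — is the delicate step; the probability computation for a single pair is routine once the independence is in place.
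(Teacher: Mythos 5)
Your proposal follows the paper's own route: condition on the event $A=\{X_{i,\Pi(i)}=x_i,\ i\in[n]\}\cap\{X_{\Pi(j),j}=y_j,\ j\in\mathcal C_3\}$, interpret each stability constraint as an event about the two ``cross'' variables $X_{i,\Pi(j)},X_{\Pi(j),i}$, and multiply the probabilities $1-x_iz_j$. But the verification you explicitly defer as ``combinatorial bookkeeping'' --- that no cross variable occurs in two factors and that every cross variable is an unconditioned uniform --- is not merely delicate: it fails, in two ways. First, for $j\in\mathcal C_3$ the pair $\{i,j\}$ with $i=\Pi^2(j)$ lies in $E_2$ (in a cycle of length $\ge 3$ one has $\Pi^2(j)\neq j,\Pi(j)$), yet \emph{both} of its cross variables are conditioned: $X_{i,\Pi(j)}=X_{\Pi^2(j),\Pi(j)}=y_{\Pi(j)}$ and $X_{\Pi(j),i}=X_{\Pi(j),\Pi^2(j)}=x_{\Pi(j)}$. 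Given $A$ this constraint is deterministic; the conjunction of these constraints around a cycle is precisely the ``all prefer successor or all prefer predecessor'' dichotomy, i.e.\ the side condition of the lemma, so its contribution is an indicator, not a factor $1-x_iz_j$. Second, your claim that ``all of $E_2$ survives because long cycles have no such symmetry collapsing'' is wrong: for a generic pair $\{i,j\}\in E_2$ (i.e.\ $i\neq\Pi^2(j)$) the mirror pair $\{i',j'\}:=\{\Pi(j),\Pi^{-1}(i)\}$ is also in $E_2$, is distinct from $\{i,j\}$, and uses the \emph{same} two cross variables, since $X_{i',\Pi(j')}=X_{\Pi(j),i}$ and $X_{\Pi(j'),i'}=X_{i,\Pi(j)}$, only with different thresholds. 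Unlike the $E_1$ case these two events do not coincide, so they are genuinely dependent; their joint conditional probability is $1-x_iz_j-x_{i'}z_{j'}+\min(x_i,z_{j'})\min(z_j,x_{i'})$, not $(1-x_iz_j)(1-x_{i'}z_{j'})$, and the map you wanted to be injective is two-to-one on this part of $E_2$.

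That no bookkeeping can rescue the product formula is shown by the smallest case: $n=3$, $\Pi$ the $3$-cycle. Then $E_1^*=\emptyset$, $E_2=\{\{1,2\},\{2,3\},\{3,1\}\}$, all three pairs are of the ``special'' type above, and conditional on $A$ stability is the deterministic event that the three agents all prefer their successors or all prefer their predecessors; hence $\Bbb P(\Pi)=2\cdot(1/2)^3=1/4$, the measure of the admissible region. The proposed integrand $(1-x_1y_2)(1-x_2y_3)(1-x_3y_1)$ is strictly below $1$ on that region, so the formula yields a value strictly less than $1/4$ (and integrating it over all of $D$ gives $(3/4)^3\neq 1/4$). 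In fairness, the paper's own proof asserts in one sentence that the $E_2$ events are distinct and conditionally independent, without checking variable-disjointness, so you have reproduced its argument faithfully, gap included; but a correct derivation must remove the special pairs $\{\Pi^2(j),j\}$, $j\in\mathcal C_3$, from the product (their role is exactly the admissibility constraint on $\bold y$) and must treat each mirror pair of generic $E_2$ constraints jointly, which changes the integrand whenever $\mathcal C_3\neq\emptyset$. Since you singled out exactly this verification as the crux and left it undone, the proposal has a genuine gap at its decisive step.
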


\begin{proof} 
By definition, $\Pi$ is stable if and only if:
\si
 For all  pairs  $i\neq j, \Pi(j)$, if $i$ prefers $\Pi(j)$ to $\Pi(i)$ then $\Pi(j)$ prefers $j$ to $i$. In terms of the matrix $\{X_{u,v}\}$, it is equivalent to 
 the condition ``$X_{i,\Pi(j)}<X_{i, \Pi(i)}\Longrightarrow X_{\Pi(j),j}<X_{\Pi(j),i}$''.  Let us call this the $\star$-condition.

Using the $\star$-condition  for an agent $i$ in a cycle $C$ of length
$3$ or more, and $j=\Pi^{-2}(i)$, we see that if $i$, for instance, prefers their predecessor $\Pi^{-1}(i)$ to their successor $\Pi(i)$, then the predecessor $\Pi^{-1}(i)$ also
prefers their own own predecessor to their own successor. Thus in each such cycle $C$ every agent  has exactly the same relative
preference for their predecessor and their successor.

Given $\bold x=\{x_i\}_{i\in [n]}$, $\bold y=\{y_j\}_{j\in \mathcal C_3}$,
introduce the event
\[
A:=\{X_{i,\Pi(i)}=x_i,\,i\in [n]\}\cap\{X_{\Pi(j),j}=y_j,\,j\in\mathcal C_3\}.
\]
From the $\star$-condition and its discussion, it follows that $\Bbb P(\Pi\text{ is stable}\boldsymbol|A)$\linebreak $=0$ if $\Pi$ has a cycle $C$ of length $3$ or more, such that for two agents $j_1,\,j_2\in  C$ we have $y_{j_1}<x_{\Pi(j_1)}$ and $y_{j_2}>x_{\Pi(j_2)}$. Suppose alternatively that for each cycle $C$, 
$(|C|\ge 3)$, either $y_j<x_{\Pi(j)}$, $(\forall\,j\in C)$, or $y_j> x_{\Pi(j)}$, $(\forall\,j\in C)$. Conditioned on the event $A$, the {\it distinct\/} events meeting the $\star$-condition 
are independent, with probabilities $1-x_i z_j$. The events for $\{i,j\}\in E_2$ are all distinct. However, the seemingly different events for $\{i,j\}\in E_1$ and for $\{i',j'\}=\{\Pi(j), \Pi(i)\}$ ($\in E_1$, as well) 
are one and the same, because $\Pi^2(\ell)=\ell$ for every $\ell\in \mathcal C_2$. We get the distinct events for those $\{i,j\}\in E_1$ by restricting them to $E_1^*$, i.e. keeping only the pairs $\{i,j\}$ with $i<\Pi(j)$. The product of all the probabilities is equal to
\[
\prod_{\{i,j\}\in E_1^*}\!\!\!(1-x_iz_j)\cdot\!\!\!\!\prod_{\{i,j\}\in E_2}\!\!\!(1- x_i z_j),
\]
which proves the formula for $\Bbb P(\Pi|\bold x,\bold y)$ in \eqref{1}.
\end{proof}

Next, we define a rank of  successor $\Pi(i)$ (predecessor $\Pi^{-1}(i)$, resp.) as $1$ plus the total number of agents
$j\neq \Pi(i)$ ($j\neq \Pi^{-1}(i)$ resp.) such that agent $i$ prefers $j$ to $\Pi(i)$ (prefers $j$ to $\Pi^{-1}(i)$, resp.). 
We introduce $R_s(\Pi)$ and $R_p(\Pi)$, the total rank of all $n$ successors and the total rank of all $n$ predecessors,
respectively.  Assuming that a permutation $\Pi$ does not have a fixed point, we have
\begin{equation}\label{3}
\begin{aligned}
R_s(\Pi)&=n+\sum_{i\in [n]} \boldsymbol|\{j: X_{i,\Pi(j)}<X_{i,\Pi(i)}\}\boldsymbol|,\\
R_p(\Pi)&=n+\sum_{j\in [n]} \boldsymbol|\{i: X_{\Pi(j),i}<X_{\Pi(j),j}\}\boldsymbol|.
\end{aligned}
\end{equation}
Let us explain the second formula, for instance. As $j$ runs through $[n]$, $\Pi(j)$ runs through $[n]$ as well. Given $j$, $i$
contributes $1$ to $R_p(\Pi)$ whenever $X_{\Pi(j),i}$ falls below $X_{\Pi(j),j}$, since $j$ is the predecessor of $\Pi(j)$.


\begin{lemma}\label{lem6} Given a permutation $\Pi$ of $[n]$ without a fixed point,  and $k,\ell\ge n$, let $P(k,\ell;\Pi)$ denote the probability that
$\Pi$ is stable, $R_s(\Pi)=k$ and $R_p(\Pi)=\ell)$. Then, denoting $1-u=\overline u$, we have 
\begin{equation}\label{4}
\begin{aligned}
P(k,\ell;\Pi)&=\!\!\!\!\!\int\limits_{(\bold x,\bold y)\in D }\!\!\!\bigl[\xi^{k-n}\eta^{\ell-n}\bigr]
\prod_{\{i,j\}\in E_1^*\cup E_2}\!\!\!\!\!(\overline x_i\overline z_j+ \xi x_i\overline z_j+\eta\overline x_iz_j)\, d\bold xd\bold y;
\end{aligned}
\end{equation}
here $\bold x, \bold y$ also meet the condition from Lemma \ref{lem5}, and the integrand equals the coefficient by $\xi^{k-n}\eta^{\ell-n}$ in the product of the $(\xi,\eta)$-linear polynomials.
\end{lemma}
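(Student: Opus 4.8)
The plan is to rerun the conditioning of Lemma~\ref{lem5} but to upgrade each scalar factor $1-x_iz_j$ — the conditional probability that the $\star$-condition holds at an ordered pair $\{i,j\}$ — into a two-variable generating polynomial in $\xi,\eta$ that records which of the two rank counters the pair increments. First I would recast the two sums in \eqref{3} over one common index set: for every ordered pair $\{i,j\}$ with $i\neq j,\Pi(j)$, the inequality $X_{i,\Pi(j)}<x_i$ (``$i$ prefers $\Pi(j)$ to $\Pi(i)$'') adds a unit to $R_s-n$, while $X_{\Pi(j),i}<z_j$ (``$\Pi(j)$ prefers $i$ to $j$'') adds a unit to $R_p-n$, and the $\star$-condition forbids precisely their simultaneous occurrence. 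Conditioned on the event $A$ of Lemma~\ref{lem5}, I would then mark an $R_s$-increment by $\xi$ and an $R_p$-increment by $\eta$, so that the coefficient of $\xi^{k-n}\eta^{\ell-n}$ in the resulting product isolates the event $\{\Pi\text{ stable},\,R_s=k,\,R_p=\ell\}$.

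For the pairs indexed by $E_2$ this refinement is immediate. Each such ordered pair is a distinct event governed by its own comparison variables $X_{i,\Pi(j)}$ and $X_{\Pi(j),i}$, so by the conditional independence already used in Lemma~\ref{lem5} its contribution to the joint generating function is $\overline x_i\overline z_j+\xi\,x_i\overline z_j+\eta\,\overline x_iz_j$: the three surviving outcomes are ``neither prefers'', ``only $i$ prefers $\Pi(j)$'' (weight $\xi$), and ``only $\Pi(j)$ prefers $i$'' (weight $\eta$), the excluded fourth outcome $x_iz_j$ being the blocking one. Setting $\xi=\eta=1$ returns $1-x_iz_j$, so the refinement is consistent with Lemma~\ref{lem5}.

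The delicate step, which I expect to be the main obstacle, is the matched-pair factors indexed by $E_1^*$. Here the representative $\{i,j\}$ also stands for the identified pair $\{\Pi(j),\Pi(i)\}$, which, since $\Pi=\Pi^{-1}$ on $\mathcal C_2$, is the \emph{same} event — same two comparison variables, same thresholds — so a single factor must serve for both ordered pairs. Yet \eqref{3} sums over \emph{both} of them, and the point to check is how their rank-increments combine. Because in a $2$-cycle the successor and the predecessor of each agent coincide, one finds that the joint contribution of $\{i,j\}$ and $\{\Pi(j),\Pi(i)\}$ to $R_s-n$ equals their joint contribution to $R_p-n$: whenever either comparison $X_{i,\Pi(j)}<x_i$ or $X_{\Pi(j),i}<z_j$ fires, \emph{both} counters advance by one. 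The matched-pair factor therefore couples the two markers as $\overline x_i\overline z_j+\xi\eta\,(x_i\overline z_j+\overline x_iz_j)$; it too reduces to $1-x_iz_j$ at $\xi=\eta=1$, but the coupling is exactly what enforces $R_s=R_p$ on a pure matching, and it is the place where simply transcribing the uncoupled $E_2$ factor would be incorrect.

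With the per-event factors in hand, the rest is assembly. Conditional independence makes the joint generating function factor over $E_1^*\cup E_2$; extracting $[\xi^{k-n}\eta^{\ell-n}]$ gives the conditional probability of $\{\Pi\text{ stable},\,R_s=k,\,R_p=\ell\}$ given $A$, and integrating over the admissible region $D$ of Lemma~\ref{lem5} — whose orientation constraint on each long cycle is inherited verbatim, since the rank refinement never touches the successor/predecessor comparisons on which that constraint depends — yields \eqref{4}. Verifying the two index-set identities recasting \eqref{3} and confirming the inheritance of the orientation constraint are then routine once the matched-pair bookkeeping above is pinned down.
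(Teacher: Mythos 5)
Your overall strategy — refine the conditional factors of Lemma \ref{lem5} into $(\xi,\eta)$-polynomials via independent marking/coloring and extract the coefficient $[\xi^{k-n}\eta^{\ell-n}]$ — is exactly the paper's own. The place where you deviate, the $E_1^*$ factors, is the interesting one: your analysis there is correct, and it does not prove the stated formula \eqref{4}, it refutes it; so your proposal is really a correction of the lemma, and your closing claim that the assembly ``yields \eqref{4}'' contradicts your own middle paragraph. The decisive check is the one your coupling enforces: if $\Pi$ is a fixed-point-free involution then $\Pi=\Pi^{-1}$, hence $R_s(\Pi)\equiv R_p(\Pi)$ and $P(k,\ell;\Pi)=0$ whenever $k\neq\ell$; but for $n\ge 4$ the right-hand side of \eqref{4} has a strictly positive $[\xi^{1}\eta^{0}]$ coefficient for a.e. $\bold x$, so \eqref{4} cannot hold. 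The error in the paper's own proof sits exactly where you point: in \eqref{3} both ordered pairs $(i,j)$ and $(\Pi(j),\Pi(i))$ of a matched-pair class contribute increments, and one and the same comparison event (say $X_{i,\Pi(j)}<x_i$) is an $R_s$-increment for one pair and an $R_p$-increment for its conjugate. The paper defines marking and coloring over all ordered pairs but then evaluates $\Bbb P(B|\bold x,\bold y)$ as a product over $E_1^*\cup E_2$ only, i.e. it identifies $\mathcal B_{i,j}$ with $\mathcal B_{\Pi(j),\Pi(i)}$; that identification is harmless in Lemma \ref{lem5}, where the two constraint events genuinely coincide, but here the conjugate pair imposes its own marking/coloring demand with $\xi$ and $\eta$ interchanged, so the paper's factor loses one factor of $\eta$ (resp. $\xi$) per firing comparison — precisely what your coupled factor $\overline x_i\overline z_j+\xi\eta\,(x_i\overline z_j+\overline x_i z_j)$ restores.

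However, your $E_2$ step — which you call immediate and which coincides with the paper's — is not sound either, for a failure of the same origin. The ordered pair $(i,j)$ and its conjugate $(\Pi(j),\Pi^{-1}(i))$ both lie in $E_2$ once an index is in $\mathcal C_3$, and they are governed by the \emph{same} two variables $X_{i,\Pi(j)}$ and $X_{\Pi(j),i}$, merely compared against different thresholds (successor value versus predecessor value); ``distinct'' does not mean independent. Concretely, for $i\in\mathcal C_2$, $j\in\mathcal C_3$ the two blocking events are $\{X_{i,\Pi(j)}<x_i,\,X_{\Pi(j),i}<y_j\}$ and $\{X_{i,\Pi(j)}<x_i,\,X_{\Pi(j),i}<x_{\Pi(j)}\}$: the conditional probability that neither occurs is $1-x_i\max(y_j,x_{\Pi(j)})$, not $(1-x_iy_j)(1-x_ix_{\Pi(j)})$, and in the generating function the two $\xi$-increments of this class can never fire together (that would create a blocking pair), whereas the product of two three-term factors gives that event positive weight. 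A related defect: the pairs $(i,\Pi^{-2}(i))$, $i\in\mathcal C_3$, involve only conditioned variables — their constraints \emph{are} the orientation condition defining $D$ — so they must enter as indicators, not as probabilistic factors. A correct statement therefore has to be organized by conjugate classes (equivalently, by unordered agent pairs $\{i,\Pi(j)\}$), with one joint two-variable factor per class; your $E_1^*$ computation is the special case where the two thresholds on each variable coincide, and the ``routine assembly'' you defer is exactly where the analogous $E_2$ computation — missing from your write-up and from the paper alike — still has to be carried out.
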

\begin{proof}
First, introducing $\Bbb I(\Pi):=\Bbb I(\Pi\text{ is stable})$, we have: for $k\ge n$, $\ell\ge n$,
\begin{align*}
\Bbb P(\Pi\text{ is stable}, R_s(\Pi)=k,\,R_p(\Pi)&=\ell\boldsymbol|\bold x,\,\bold y)\\
&=
\bigl[\xi^k\eta^{\ell}\bigr]\,\Bbb E\bigl[\Bbb I(\Pi)\,\xi^{R_s(\Pi)}\,\eta^{R_p(\pi)}\boldsymbol|\bold x,\,\bold y\bigr].
\end{align*}
To evaluate the conditional expectation, it is convenient to treat the attendant generating function probabilistically.  Let
$\xi,\eta\in [0,1]$ be chosen. We sift through the ordered pairs of distinct $i,j$ in any order. Whenever $X_{i,\Pi(j)}<X_{i,\Pi(i)} (=x_i)$,
we ``mark'' $\{i,j\}$ with probability $\xi$; whenever $X_{\Pi(j),i}<X_{\Pi(j),j}(=z_j)$, we ``color'' $\{i,j\}$ with probability $\eta$. Assume that the coloring-marking operations for distinct pairs $\{i,j\}$ are done independently. For completeness only, assume also that if $X_{i,\Pi(j)}<X_{i,\Pi(i)}$ {\it and\/} $X_{\Pi(j),i}<X_{\Pi(j),j}$ then marking and coloring of $\{i,j\}$ are done independently. [No such pair exists on
event ``$\Pi$ is stable''.] Then 
\[
\Bbb E\bigl[\Bbb I(\Pi)\,\xi^{R_s(\Pi)}\,\eta^{R_p(\Pi)}\boldsymbol|\bold x,\,\bold y\bigr]=\xi^n \eta^n \Bbb P(B|\bold x,\bold y),
\]
where $B$ is the event ``$\Pi$ is stable, and all pairs $\{i,j\}$ which are eligible for marking or coloring are marked or colored''. Therefore
$B=\cap_{\{i,j\}}\mathcal B_{i,j}$, where 
\begin{multline*}
\mathcal B_{i,j}=\bigl\{(X_{i,\Pi(i)}<X_{i,\Pi(j)},\,X_{\Pi(j),j}< X_{\Pi(j),i})\\
\text{ or }(X_{i,\Pi(i)}>X_{i,\Pi(j)},\,X_{\Pi(j),j}< X_{\Pi(j),i}\text{ and } \{i,j\}\text{ is 
marked})\\
\text{ or }(X_{i,\Pi(i)}<X_{i,\Pi(j)},\,X_{\Pi(j),j}> X_{\Pi(j),i}\text{ and } \{i,j\}\text{ is colored})\bigr\};
\end{multline*}
notice that the event ``$X_{i,\Pi(j)}<X_{i,\Pi(i)}$ {\it and\/} $X_{\Pi(j),i}<X_{\Pi(j),j}$'' is excluded, as it should be. For $\{i,j\}\in E_1^*\cup E_2$,
the events $\mathcal B_{i,j}$ are conditionally independent, and
\[
\Bbb P(\mathcal B_{i,j}|\bold x,\bold y)=
\overline x_i\overline z_j+\xi x_i\overline z_j+\eta\overline x_iz_j.
\]
Therefore, for $\xi,\eta\in [0,1]$ and thus for all $\xi,\eta$, we have
\begin{align*}
\Bbb E\bigl[\Bbb I(\Pi)\,\xi^{R_s(\Pi)}\,\eta^{R_p(\pi)}\boldsymbol|\bold x,\,\bold y\bigr]&=\xi^n \eta^n\!\!\!\!\!\! \prod_{\{i,j\}\in E_1^*\cup E_2}\!\!\!\!(\overline x_i\overline z_j+ \xi x_i\overline z_j+\eta\overline x_iz_j),
\end{align*}
which proves \eqref{4}.
\end{proof}
{\bf Note.\/}  The integral formulas \eqref{1.5} and \eqref{4} resemble their counterparts for the bipartite stable {\it matchings\/}, see Lemma 3.1 in \cite{Pit2}.   \\

The equations \eqref{4} will be used to determine a likely magnitude of the total number of proposals. 
\section{Likely stable permutations.} 
\noindent {\bf Definition.\/} A stable $\Pi$ is $\Pi_0$-like if each $i$ prefers $\Pi(i)$ to $\Pi^{-1}(i)$. 
\subsection{A lower bound for the likely number of fixed pairs.}
A pair of agents $\{i,j\}$ is called fixed if they form a matched pair in every stable permutation $\Pi$. By Corollary \ref{1}, a pair is fixed if and only if it is a match in $\Pi_0$. 
\begin{theorem}\label{thm1} Let $\mathcal L(\Pi)$ stand for the total (even) cardinality of the matched pairs in a stable $\Pi$.
Let $S_{n;l}$ denote the total number of fixed-point-free, $\Pi_0$-like permutations $\Pi$ with $\mathcal L(\Pi)=l$. 
If $\Delta\in (3/4,1)$, then $\sum_{l\le  n-2n^{\Delta}}\Bbb E[S_{n;l}] = O(n^{-1})$.
Consequently, with probability $1-O(n^{-1})$, the double number of matched pairs in $\Pi_0$ exceeds $n-2n^{\Delta}$ .
\end{theorem}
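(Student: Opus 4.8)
The plan is to estimate $\Bbb E[S_{n;l}]$ directly from the integral formula of Lemma~\ref{lem5} specialized to $\Pi_0$-like permutations, and then to sum over $l\le n-2n^\Delta$. For a fixed-point-free $\Pi$, the requirement that each $i$ prefer $\Pi(i)$ to $\Pi^{-1}(i)$ selects, in every cycle $C$ with $|C|\ge 3$, the alternative $y_j>x_{\Pi(j)}$ $(\forall j\in C)$ of Lemma~\ref{lem5}; hence $\Bbb P(\Pi\text{ is }\Pi_0\text{-like})$ equals the integral \eqref{1} over the restricted domain $D'=\{(\bold x,\bold y)\in D: y_j>x_{\Pi(j)}\ \forall j\in\mathcal C_3\}$, and $\Bbb E[S_{n;l}]=\sum_{\Pi}\Bbb P(\Pi\text{ is }\Pi_0\text{-like})$, the sum being over fixed-point-free $\Pi$ with exactly $l$ agents in $2$-cycles. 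The final ``consequently'' clause then follows quickly: $\Pi_0$ is stable (Lemma~\ref{lem2}) and, when fixed-point-free, is $\Pi_0$-like with $\mathcal L(\Pi_0)$ equal to twice its number of matches (Corollary~\ref{1}); since $\Bbb P_n=\Bbb P(\Pi_0\text{ has a fixed point})\le\exp(-n^{1/2-o(1)})=o(n^{-1})$ by the Section~3 estimate, Markov's inequality applied to $\sum_{l\le n-2n^\Delta}S_{n;l}$ (to which a $\Pi_0$-like $\Pi_0$ contributes whenever $\mathcal L(\Pi_0)\le n-2n^\Delta$) will give $\Bbb P(\mathcal L(\Pi_0)\le n-2n^\Delta)=O(n^{-1})$.

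For the central bound write $m:=n-l$ for the number of agents on cycles of length $\ge 3$ and use $1-x_iz_j\le e^{-x_iz_j}$. The factors indexed by pairs $\{i,j\}\in E_2$ with $j\in\mathcal C_3$ carry $z_j=y_j$; grouping them by $j$ gives $\prod_{i\ne j,\Pi(j)}(1-x_iy_j)\le\exp(-y_j(X-x_j-x_{\Pi(j)}))$, where $X:=\sum_{i\in[n]}x_i$, and integrating each $y_j$ over $(x_{\Pi(j)},1)$ contributes $\le(X-x_j-x_{\Pi(j)})^{-1}\exp(-x_{\Pi(j)}(X-x_j-x_{\Pi(j)}))$. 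The $\bold y$-integration thus produces a factor $X^{-m}$ times an exponential which, combined with the surviving factors indexed by $E_1^*$ and by the $E_2$-pairs with $j\in\mathcal C_2$, leaves an $\bold x$-integral whose exponent is, to leading order, $-\tfrac12X_A^2-2X_AX_B-X_B^2$ with $X_A:=\sum_{i\in\mathcal C_2}x_i$, $X_B:=\sum_{i\in\mathcal C_3}x_i$. So
\[
\Bbb P(\Pi\text{ is }\Pi_0\text{-like})\ \lesssim\ \int_{[0,1]^n}X^{-m}\exp\!\Bigl(-\tfrac12X_A^2-2X_AX_B-X_B^2\Bigr)\,d\bold x,
\]
a bound depending on $\Pi$ only through $|\mathcal C_2|=l$ and $|\mathcal C_3|=m$.

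I would then carry out the combinatorial sum. Summing over all admissible $\Pi$ multiplies this integral by $\binom nm(l-1)!!\,c_m$, where $c_m\le m!$ is the number of permutations of the $m$ cycle-agents into cycles of length $\ge 3$; as $\binom nm(l-1)!!\,m!=n!/(2^{l/2}(l/2)!)$, the prefactor is at most $n!/(2^{l/2}(l/2)!)$. Replacing the densities of $X_A$ and $X_B$ by their left-tail monomials $s^{l-1}/(l-1)!$ and $t^{m-1}/(m-1)!$ and evaluating the two Gamma integrals (the $t$-integral concentrating at $t\approx m/(2X_A)$, the $s$-integral at the saddle $X_A\approx\sqrt{n-3m}$), and cancelling the powers of $2$, gives a bound of the form $\Bbb E[S_{n;l}]\lesssim n!\,\Gamma\!\bigl(\tfrac{n-3m}2\bigr)\big/\bigl(2^{2m}(l/2)!\,(l-1)!\bigr)$. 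This equals $1$ at $m=0$ and its ratio between consecutive admissible values of $m$ is $\sim\tfrac14$ throughout the range $m=o(n)$, so $\Bbb E[S_{n;l}]$ decays geometrically, like $2^{-m}$. Consequently $\sum_{m\ge 2n^\Delta}\Bbb E[S_{n;n-m}]$ is super-polynomially small, hence $O(n^{-1})$, the chosen threshold $2n^\Delta$ with $\Delta>3/4$ leaving ample room.

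The hard part will be the rigorous, uniform-in-$m$ control of the integral. Two issues demand care. First, the singular factor $X^{-m}$ near $X=0$: here $m<n/3$ (valid since $m\le 2n^\Delta$ and $\Delta<1$) keeps $X^{\,n-1-2m}$ integrable and renders the small-$X$ region negligible. Second, the replacement of the exact Irwin--Hall densities of $X_A,X_B$ by their leading monomials at the left-tail scales $X_A\approx\sqrt n$ and $X_B\approx m/\sqrt n$ must be justified by a tilting/saddle-point estimate whose multiplicative errors do not accumulate as $m$ grows. Once these estimates are made uniform over the relevant range of $m$, the geometric decay and the tail bound follow, completing the proof.
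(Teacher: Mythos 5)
Your overall plan---the integral formula of Lemma \ref{lem5}, an exponential bound on the integrand depending on $\Pi$ only through $l=|\mathcal C_2|$ and $m=|\mathcal C_3|$, multiplication by the permutation count $\binom nm(l-1)!!\,c_m\le n!/(2^{l/2}(l/2)!)$, then a saddle-point evaluation---is exactly the paper's, and your leading exponent $-\tfrac12X_A^2-2X_AX_B-X_B^2$ coincides with the paper's $-s^2+s_1^2/2$. But there is a fatal error about which range of $m$ the theorem concerns. The sum is over $l\le n-2n^{\Delta}$, i.e.\ over $m=n-l\ge 2n^{\Delta}$, running all the way up to $m=n$: the permutations that must be ruled out are those consisting almost entirely of cycles of length $\ge 3$. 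Your parenthetical ``$m<n/3$ (valid since $m\le 2n^{\Delta}$ and $\Delta<1$)'' inverts the constraint. Every quantitative ingredient of your argument---integrability of the factor $X^{-m}$ (which needs $m<n/2$), the saddle $X_A\approx\sqrt{n-3m}$ and the factor $\Gamma\bigl(\tfrac{n-3m}2\bigr)$ (which need $m<n/3$), and the ratio-$\sim\tfrac14$ claim (stated only for $m=o(n)$)---fails or is undefined on the bulk of the range you must cover, so the claimed uniform geometric decay $2^{-m}$ has no justification there. Nor is this a removable technicality: by extending the $y_j$-integration to $\infty$ you replaced the paper's factor $\bigl(\tfrac{1-e^{-s}}{s}\bigr)^m$ by $s^{-m}$, and for $m\ge n/2$ the resulting $\bold x$-integral $\int_0 s^{n-1-2m}e^{-\cdots}\,ds$ diverges at $s=0$. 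The paper keeps $(1-e^{-s})^m$ precisely to handle small $l$: there it maximizes $H_{m,n}(\eta)=-\eta^2+m\log(1-e^{-\eta})+(n-m)\log\eta$ at $\eta^*\sim\log\frac{m}{2\log m}$ and obtains only a quasi-polynomial bound $e^{-(0.5-o(1))\log^2 n}$---not $2^{-m}$---while the middle range $l\in[0.5\log^2n/\log\log n,\;n-2n^{\Delta}]$ is handled by concentration of the spacings (Lemmas \ref{lem10}, \ref{lem11}), and the final $O(n^{-1})$ actually comes from the exceptional event $\max_iv_i>2.02n^{-1}\log n$.

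A second, smaller, gap: the step ``the $\bold y$-integration thus produces a factor $X^{-m}$'' uses inequalities in the wrong direction. With $A_j=X-x_j-x_{\Pi(j)}$ one has $A_j\le X$, so $A_j^{-1}\ge X^{-1}$ and $e^{-x_{\Pi(j)}A_j}\ge e^{-x_{\Pi(j)}X}$; moreover $A_j$ can be nonpositive, in which case your bound on $\int_{x_{\Pi(j)}}^1e^{-yA_j}\,dy$ is vacuous. The paper's Proposition \ref{7.5} avoids this by first proving $\log\prod_{i\in I,\,j\in J}(1-x_iz_j)\le -x(I)z(J)+5$ via Cauchy--Schwarz (the source of the constant $c=e^{20}$), so that the \emph{full} coefficient $s$ multiplies each $y_j$, and only then integrating using log-concavity of $f(x)=\int_x^1e^{-sy}\,dy$. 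The ``issues demanding care'' you defer are precisely the content of Proposition \ref{7.5} and Lemmas \ref{lem10}--\ref{lem11}; without them, and with no treatment at all of $m\ge n/3$, the proposal does not establish the theorem.
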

\subsubsection{Bounding $\Bbb P(\Pi|\bold x)$.} 

The first step in the proof of Theorem \ref{thm1} is 

\begin{proposition}\label{7.5} Let $s:=\sum_{i\in [n]} x_i$, $s_1:=\sum_{i\in\mathcal C_2} x_i$. For all $\bold x\in [0,1]^n$, we have
\begin{equation}\label{2.01}
\Bbb P(\Pi|\bold x)\le
c\,\exp\Bigl(-s^2 +\frac{s_1^2}{2}\Bigr)\Biggl(\!\frac{1-e^{-s}}{s}\!\Biggr)^{m}, \quad c=e^{20}.
\end{equation}
\end{proposition}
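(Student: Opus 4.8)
The plan is to carry out the $\bold y$-integration of Lemma \ref{lem5} explicitly. Since the permutations counted in Theorem \ref{thm1} are $\Pi_0$-like, for every $j\in\mathcal C_3$ the agent $\Pi(j)$ prefers its successor to $j$, i.e. $x_{\Pi(j)}<y_j$; hence the admissible region of Lemma \ref{lem5} is exactly the single branch $\{\bold y:\,y_j>x_{\Pi(j)}\ \forall j\in\mathcal C_3\}$ and the integrand factorizes. Writing $z_j=x_{\Pi(j)}$ (a constant) for $j\in\mathcal C_2$ and $z_j=y_j$ for $j\in\mathcal C_3$, I would split $\prod_{E_1^*\cup E_2}(1-x_iz_j)$ into its $\bold y$-free part $P_2(\bold x)$, the product of all factors with $j\in\mathcal C_2$, and a $\bold y$-carrying part whose integral is $\prod_{j\in\mathcal C_3}\int_{x_{\Pi(j)}}^1 g_j(y)\,dy$, with $g_j(y):=\prod_{i\neq j,\Pi(j)}(1-x_iy)$.

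For $P_2$ I would use the second-order bound $\log(1-u)\le-u-\tfrac12u^2$ (the crude $1-u\le e^{-u}$ is genuinely insufficient), getting $P_2\le\exp(-S_A-\tfrac12 S_A^{(2)})$ with $S_A=\tfrac12 s_1^2+s_1s_3-\tfrac12 q_2-p$, where $s_3:=s-s_1$, $q_2:=\sum_{\mathcal C_2}x_i^2$, $p:=\sum_{\text{matches}}x_ax_b$, and $S_A^{(2)}:=\sum(x_ix_{\Pi(j)})^2$. For each $\mathcal C_3$-integral I would insert $g_j(y)\le\exp(-\sigma_jy-\tfrac12\tau_jy^2)$, $\sigma_j:=s-x_j-x_{\Pi(j)}$, $\tau_j:=\sum_{i\neq j,\Pi(j)}x_i^2$, and---this is the crux---retain the lower limit $x_{\Pi(j)}$ forced by the $\Pi_0$-like constraint. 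Substituting $y=x_{\Pi(j)}+w$ peels off a boundary factor $\exp(-\sigma_jx_{\Pi(j)}-\tfrac12\tau_jx_{\Pi(j)}^2)$ and leaves $\int_0^{1-x_{\Pi(j)}}e^{-\sigma_jw}\,dw\le\frac{1-e^{-s}}{\sigma_j}=\frac{1-e^{-s}}{s}\cdot\frac{s}{\sigma_j}$ (using $\sigma_j(1-x_{\Pi(j)})\le s$). The boundary exponents, summed over $\mathcal C_3$ as $\sum\sigma_jx_{\Pi(j)}=ss_3-q_3-r_3$ with $q_3:=\sum_{\mathcal C_3}x_j^2$ and $r_3:=\sum_{\mathcal C_3}x_jx_{\Pi(j)}$, are exactly what upgrade the exponent to the full $-s^2$; in the all-$\mathcal C_2$ case that strength comes instead out of $S_A$.

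Multiplying the two parts, the quadratic terms $-\tfrac12 s_1^2-s_1s_3$ (from $S_A$) and $-ss_3$ (from the boundary factors) combine, via $ss_3=(s_1+s_3)s_3$, into precisely $-s^2+\tfrac12 s_1^2$. The $\mathcal C_3$-factors contribute $\bigl(\tfrac{1-e^{-s}}{s}\bigr)^{|\mathcal C_3|}$, identifying $m=|\mathcal C_3|$, while everything else is gathered into
\[
R=\tfrac12 q_2+p+q_3+r_3-\tfrac12 S_A^{(2)}-\tfrac12\sum_{j\in\mathcal C_3}\tau_jx_{\Pi(j)}^2+\log\prod_{j\in\mathcal C_3}\frac{s}{\sigma_j}.
\]
This gives $\Bbb P(\Pi|\bold x)\le e^{R}\exp(-s^2+\tfrac12 s_1^2)\bigl(\tfrac{1-e^{-s}}{s}\bigr)^{m}$, so the Proposition reduces to the uniform estimate $R\le20$ on $[0,1]^n$.

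The uniform bound $R\le20$ is the real work and the main obstacle. Each positive term is $O(1)$ at the typical scale $x_i\asymp n^{-1/2}$, but can grow to $\Theta(n)$ when many $x_i$ are bounded away from $0$; there it must be dominated by the negative second-order terms, which are quadratic in the same variables (for instance $\tfrac12\sum_{\mathcal C_3}\tau_jx_{\Pi(j)}^2\approx\tfrac12\bigl(\sum_i x_i^2\bigr)q_3$ beats $q_3+r_3$ once $\sum_i x_i^2\gtrsim1$). Using $p\le\tfrac12 q_2$, $r_3\le q_3$, $q_3\le s_3$ together with a short case split on the size of $\sum_i x_i^2$, I expect $R$ to stay below an absolute constant comfortably under $20$. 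Two finishing points need care: $\log\prod_{\mathcal C_3}(s/\sigma_j)\le 4s_3/s=O(1)$ holds only when no single agent carries more than half of $s$, which can fail only in the regime $s<4$; but there $\exp(-s^2)$ is far from sharp (the true integral decays only like $e^{-s^2/(1-e^{-s})}$), leaving ample slack that is absorbed by a separate elementary estimate. Balancing all of these positive/negative trade-offs simultaneously, rather than any one inequality, is where the difficulty lies.
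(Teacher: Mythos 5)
Your reduction is sound algebra as far as it goes, but it stops short of a proof precisely at the point you yourself flag as ``the real work'': the uniform bound $R\le 20$ is never established, only argued to be plausible at the typical scale $x_i\asymp n^{-1/2}$ and checked heuristically in extreme regimes. Since the Proposition is claimed for \emph{all} $\bold x\in[0,1]^n$, that uniform constant \emph{is} the entire content of the statement, so what you have is a reformulation of the problem rather than a solution. The paper closes exactly this hole with one short device your plan lacks: after the second-order bound $\log(1-u)\le -u-\tfrac12 u^2$, all the positive ``exclusion'' terms (your $\tfrac12 q_2+p$, $q_3+r_3$, etc.) are bounded via Cauchy--Schwarz by $3Z^{1/2}$, where $Z=\bigl(\sum_{i\in I}x_i^2\bigr)\bigl(\sum_{j\in J}z_j^2\bigr)$ is precisely the negative second-order mass that is available, and then
\[
-\tfrac12 Z+3Z^{1/2}=-\tfrac12\bigl(Z^{1/2}-3\bigr)^2+4.5\le 4.5
\]
uniformly, with no case analysis. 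Applied blockwise to $(I,J)=(\mathcal C_2,\mathcal C_3),(\mathcal C_3,\mathcal C_2),(\mathcal C_3,\mathcal C_3)$, together with the analogous bound for the $E_1^*$ product, this yields the constant $e^{4.5}\cdot e^{15}\le e^{20}$ in a few lines; this completing-the-square step is the missing idea that turns the trade-off you describe (``negative quadratic terms must beat positive linear ones'') into a one-line inequality.

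A second, independent defect is your treatment of the $\bold y$-integrals. You factor out $\frac{1-e^{-s}}{\sigma_j}$ with $\sigma_j=s-x_j-x_{\Pi(j)}$, but $\sigma_j$ can be zero or negative (e.g. $x_j=x_{\Pi(j)}=1$ with the remaining $x_i$ small), in which case the bound $\int_0^{1-x_{\Pi(j)}}e^{-\sigma_j w}\,dw\le\frac{1-e^{-s}}{\sigma_j}$ is false or meaningless; moreover your control $\log\prod_j(s/\sigma_j)\le 4s_3/s$ requires $x_j+x_{\Pi(j)}\le s/2$ for every $j$, and the promised ``separate elementary estimate'' for the regime $s<4$ is never supplied -- it is not routine there, since $m$ can be of order $n$. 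The paper sidesteps this entirely: it first bounds the $\bold y$-integrand by $e^{-sy}$ (keeping the full $s$, not $\sigma_j$), so each integral equals $f(x_j)=s^{-1}(e^{-sx_j}-e^{-s})$ exactly, and then uses log-concavity of $f$ to get $\prod_{j\in\mathcal C_3}f(x_j)\le f\bigl(x_{\mathrm{ave}}(\mathcal C_3)\bigr)^m\le e^{-sx(\mathcal C_3)}\bigl(\frac{1-e^{-s}}{s}\bigr)^m$, with no per-factor denominators and hence no degenerate regime. To salvage your route you would need both (i) a genuine proof of $R\le 20$ (the Cauchy--Schwarz/complete-the-square trick does it) and (ii) a replacement for the $\sigma_j$-based integration that survives $\sigma_j\le 0$.
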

\noindent  We will use the bounds
\begin{equation}\label{basic}
\begin{aligned}
&\log(1-u) \le -u - \frac{u^2}{2},\quad u\in [0,1),\\
&\log(1-u) = -u - \frac{u^2}{2}+O(|u|^3),\quad  u\to 0.
\end{aligned}
\end{equation}
\begin{proof}  By Lemma \ref{lem5}, 
and \eqref{1}, we have 
\begin{equation}\label{2.1}
\begin{aligned}
\Bbb P(\Pi|\bold x,\bold y)&:=\Bbb P(\Pi\text{ is stable}|A)=\!\!\!\prod_{\{i,j\}\in E_1^*\cup E_2}\!\!\!\!\!(1-x_iz_j),\\
\end{aligned}
\end{equation}
where $z_j=x_{\Pi(j)}$ for $j\in \mathcal C_2$, $z_j=y_j>x_{\Pi(j)}$ for $j\in \mathcal C_3$.
\begin{lemma}\label{lem6} Denoting  $a(I):=\sum_{i\in I} a_i$, we have
\begin{equation}\label{2.2}
\begin{aligned}
&\,\,\,\,\,\prod_{(i,j)\in E_1^*}\!(1-x_iz_j)\le e^{4.5}\exp\Bigl(-\frac{1}{2}x^2(\mathcal C_2)\Bigr),\\
&\quad\prod_{\{i,j\}\in E_2}\!(1-x_iz_j)\le e^{15}\exp\Bigl(-x(\mathcal C_2)x(\mathcal C_3) -y(\mathcal C_3)\bigl(x(\mathcal C_2)+x(\mathcal C_3)\bigr)\Bigr).
\end{aligned}
\end{equation}
\end{lemma}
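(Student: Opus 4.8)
Prove Lemma 4.7 — the two product bounds in (2.2).

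The plan is to take logarithms and apply the elementary estimate $\log(1-u)\le -u-u^2/2$ from (4.2), then sum the resulting quadratic forms over the edge sets $E_1^*$ and $E_2$, completing squares to recognize $x(\mathcal C_2)$, $x(\mathcal C_3)$, $y(\mathcal C_3)$, and absorbing boundary/diagonal discrepancies into the numerical constants $e^{4.5}$ and $e^{15}$.

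First, for the $E_1^*$ bound I would recall that $E_1^*$ consists of ordered pairs $\{i,j\}$ with $i,j\in\mathcal C_2$, $i\neq j,\Pi(j)$, and $i<\Pi(j)$, with weight $1-x_ix_{\Pi(j)}$ (since $z_j=x_{\Pi(j)}$ on $\mathcal C_2$). Taking logs and using $\log(1-x_ix_{\Pi(j)})\le -x_ix_{\Pi(j)}$, the sum over $E_1^*$ should be comparable to $-\tfrac14\bigl(x(\mathcal C_2)^2-\sum_{i\in\mathcal C_2}x_i^2\bigr)$ — the factor $\tfrac12$ coming from the unordered selection $i<\Pi(j)$ and another $\tfrac12$ reconstructing the full square $x(\mathcal C_2)^2=\sum_{i,i'}x_ix_{i'}$. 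I need to be careful about exactly which diagonal and ``matched-partner'' terms are missing from $E_1^*$ versus the full double sum defining $x(\mathcal C_2)^2$; each $x_i\in[0,1]$, so the total deficit $\sum_{i}x_i^2+(\text{partner terms})$ is at most $O(|\mathcal C_2|)$ — but that is not obviously $O(1)$. The resolution must be that these omitted terms are themselves bounded once one sums $\log(1-u)$ rather than crudely, or that a more careful pairing shows the missing contribution is genuinely $O(1)$; pinning this down to justify the explicit constant $e^{4.5}$ is the first technical point.

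Next, for the $E_2$ bound, $E_2$ collects ordered pairs $\{i,j\}$ (with $i\neq j,\Pi(j)$) having at least one endpoint in $\mathcal C_3$, weighted by $1-x_iz_j$ where $z_j=y_j$ for $j\in\mathcal C_3$ and $z_j=x_{\Pi(j)}$ for $j\in\mathcal C_2$. Using $\log(1-x_iz_j)\le -x_iz_j$ and splitting the sum according to whether the endpoints lie in $\mathcal C_2$ or $\mathcal C_3$, I expect the dominant linear-in-log terms to assemble into $-x(\mathcal C_2)y(\mathcal C_3)-x(\mathcal C_3)y(\mathcal C_3)-x(\mathcal C_2)x(\mathcal C_3)$ after I use $z_j\ge$ the appropriate coordinate and the structural fact (from the admissibility condition and the discussion following Lemma 4.5) that on each long cycle $y_j>x_{\Pi(j)}$, so the $y$-terms genuinely dominate. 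The target right-hand side is exactly $\exp\bigl(-x(\mathcal C_2)x(\mathcal C_3)-y(\mathcal C_3)(x(\mathcal C_2)+x(\mathcal C_3))\bigr)$, so the bookkeeping must show every cross-term $x(\mathcal C_2)x(\mathcal C_3)$, $x(\mathcal C_2)y(\mathcal C_3)$, $x(\mathcal C_3)y(\mathcal C_3)$ appears with the right coefficient once the ordered/unordered conventions are accounted for.

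The main obstacle, and where I would spend most care, is the constant-tracking: both bounds claim \emph{explicit} multiplicative constants ($e^{4.5}$ and $e^{15}$), which means every edge omitted from the ideal double sum (diagonal terms $i=j$, partner terms $j=\Pi(j)$ or $i=\Pi(j)$, and the factor-of-$2$ mismatches between ordered pairs and completed squares) must be shown to contribute a bounded total, uniformly in $\mathbf x,\mathbf y$ and in the cycle structure. Since each missing term has magnitude $x_ix_{\Pi(j)}\le 1$ and the number of such structurally-forced omissions per cycle is bounded (a constant number of diagonal/adjacent pairs per agent), the aggregate correction is $O(1)$ with an absolute constant; verifying that this constant does not exceed $4.5$ (resp.\ $15$) is a finite, if fiddly, count that I would carry out cycle-type by cycle-type. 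I would close by combining the two displays, which yields exactly (2.2).
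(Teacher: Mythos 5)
Your overall strategy (take logarithms, use $\log(1-u)\le -u-u^2/2$ from \eqref{basic}, and decompose according to the blocks $\mathcal C_2$, $\mathcal C_3$) is the right starting point, but there is a genuine gap exactly at the point you first flag and then wave away: the omitted pairs and the explicit constants. In your closing paragraph you argue that, since each agent forces only a constant number of omissions ($j=i$ and $j=\Pi^{-1}(i)$), ``the aggregate correction is $O(1)$ with an absolute constant.'' This does not follow: the total number of omitted pairs is of order $n$, and each omitted term $x_iz_j$ can be as large as $1$, so the naive aggregate correction is of order $n$ --- precisely the worry you raised earlier (``that is not obviously $O(1)$'') and never resolved. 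No finite ``cycle-type by cycle-type'' count can repair this, because the corrections scale with the cycle sizes, not with the number of cycle types.

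The missing idea is how the paper absorbs these corrections. For a generic block product over $i\in I$, $j\in J$ (with $\Pi(I)=I$, $\Pi(J)=J$, $j\neq i,\Pi^{-1}(i)$), one keeps the second-order term of \eqref{basic}: setting $Z:=\bigl(\sum_{i\in I}x_i^2\bigr)\bigl(\sum_{j\in J}z_j^2\bigr)$, the retained quadratic part contributes $-\tfrac12 Z$ (up to the omitted diagonal/partner quadratic terms), while the omitted linear terms $\sum_{i\in I}x_i\bigl(z_{\Pi^{-1}(i)}+z_i\bigr)\Bbb I(i\in J)$ are bounded via Cauchy--Schwarz --- using that $\Pi$ is a bijection on each block --- so that the full correction is at most $3Z^{1/2}$. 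The decisive step is the completion of the square $-\tfrac12 Z+3Z^{1/2}=-\tfrac12\bigl(Z^{1/2}-3\bigr)^2+4.5\le 4.5<5$, which is what makes the additive constant uniform in $n$, in $\bold x,\bold y$, and in the cycle structure; applying this to the three blocks $(I,J)=(\mathcal C_2,\mathcal C_3)$, $(\mathcal C_3,\mathcal C_2)$, $(\mathcal C_3,\mathcal C_3)$ and adding the exponents gives exactly the second bound with $e^{15}$. Your plan cannot reach this, because for the $E_2$ bound you propose to use only the first-order estimate $\log(1-x_iz_j)\le -x_iz_j$, discarding the quadratic term --- yet that term is precisely what absorbs the unboundedly many corrections. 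Note finally that the paper does not prove the first inequality from scratch either: it is quoted as a close version of an inequality from \cite{Pit5}; a self-contained proof would again use the $-\tfrac12 Z+3Z^{1/2}$ device, not a bounded count.
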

\begin{proof} The first inequality is a close version of an inequality
proved in \cite{Pit5}.  Turn to the second inequality.  Let us bound the generic product $\prod_{i\in I, j\in J} (1-x_iz_j)$; here
$\Pi(I)=I$, $\Pi(J)=J$, and  $j\neq i, \Pi^{-1}(i)$. Using the inequality in \eqref{basic} we have
\begin{multline*}
\log\!\!\prod_{i\in I,\,j\in J}\!\!\!(1-x_ iz_j)\le-\sum_{\i\in I,j\in J}x_ iz_j-\frac{1}{2}\sum_{i\in I, j\in J}x_ i^2z_j^2\\
\le -\sum_{i\in I} x_i \Biggl(\sum_{j\in J}z_j-\Bigl(z_{\Pi^{-1}(i)}+z_i)\Bigr)\Bbb I(i\in J)\Biggr)\\
-\frac{1}{2}\sum_{i\in I} x_i^2 \Biggl(\sum_{j\in J}z_j^2-\Bigl(z_{\Pi^{-1}(i)}^2+z_i^2\Bigr)\Bbb I(i\in J)\Biggr)\\
\le -x(I) z(J)-
\frac{1}{2}\Biggl(\sum_{i\in I} x_i^2\Biggr) \Biggl(\sum_{j\in J}z_j^2\Biggr)\\
+\frac{3}{2}\sum_{i\in I} x_i\Bigl(z_{\Pi^{-1}(i)}+z_i\Bigr)\Bbb I(i\in J).
\end{multline*}
Since $\Pi$ is a bijection on $I$, by Cauchy-Schwartz inequality,  the bottom expression is below $3Z^{1/2}$,
where 
\[
Z:=\Biggl(\sum_{i\in I} x_i^2\Biggr) \Biggl(\sum_{j\in J}z_j^2\Biggr).
\]
Therefore
\[
\log\!\!\prod_{i\in I, j\in J}\!\!\!(1-x_ iz_j)\le- x(I) z(J) -0.5 Z+3 Z^{1/2},
\]
and we notice that $-0.5 Z+3 Z^{1/2}=-0.5(Z^{1/2}-3)^2 +4.5<5$. We conclude that
\begin{equation}\label{!}
\log\!\!\prod_{i\in I,\, j\in J}\!\!\!(1-x_ iz_j)\le -x(I) z(J)+5.
\end{equation}
Applying \eqref{!} to $(I=\mathcal C_2,\,J=\mathcal C_3)$, $(I=\mathcal C_3,\,J=\mathcal C_2)$, and $(I=J=\mathcal C_3)$, adding the bounds,
exponentiating the result, and using the definition of $z_j$, we obtain the bottom bound in \eqref{2.2}.
\end{proof}
{\bf Note.\/} The inequality \eqref{!} also holds for all $I$ and $J$ and $n$ sufficiently large,
if $\max_{i\in [n]} x_i\le \delta_n\to 0$ and $\bold z\in \Bbb R^n$ is such that $\max_{j\in [n]} |z_j|\le a$, $a$ being fixed.
The proof is based on the second estimate in \eqref{basic}, but otherwise it is a minor variation of the argument above.

\begin{corollary}\label{cor2} Denoting  $c=e^{20}$, we have
\begin{align*}
\Bbb P(\Pi|\bold x,\bold y)&\le c\exp\Bigl(-\frac{1}{2}x^2(\mathcal C_2)-x(\mathcal C_2)x(\mathcal C_3)\Bigr)\times e^{-sy(\mathcal C_3)}. 
\end{align*}
\end{corollary}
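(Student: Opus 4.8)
The plan is to read off the corollary directly from the two product bounds in \eqref{2.2}, since these already do all the analytic work. By \eqref{2.1} the conditional probability splits along the disjoint index sets $E_1^*$ and $E_2$,
\[
\Bbb P(\Pi|\bold x,\bold y)=\prod_{(i,j)\in E_1^*}(1-x_iz_j)\cdot\prod_{\{i,j\}\in E_2}(1-x_iz_j),
\]
so I would simply multiply the two estimates of \eqref{2.2} term for term. The prefactors combine to $e^{4.5}\cdot e^{15}=e^{19.5}$, which I would round up to the stated constant $c=e^{20}$; this is deliberately wasteful but keeps the bound tidy and is the only place the explicit numerical constants enter. Adding the two exponents gives
\[
-\tfrac12 x^2(\mathcal C_2)-x(\mathcal C_2)x(\mathcal C_3)-y(\mathcal C_3)\bigl(x(\mathcal C_2)+x(\mathcal C_3)\bigr).
\]

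The one genuine observation is how the coefficient of $-y(\mathcal C_3)$ collapses. Since $\Pi$ is fixed-point-free throughout this section, all its cycles have length $\ge 2$, so $\mathcal C_2$ and $\mathcal C_3$ partition $[n]$; hence $x(\mathcal C_2)+x(\mathcal C_3)=\sum_{i\in[n]}x_i=s$. Substituting this identity into the last term converts $-y(\mathcal C_3)\bigl(x(\mathcal C_2)+x(\mathcal C_3)\bigr)$ into exactly $-sy(\mathcal C_3)$, which I would pull out as the separate factor $e^{-sy(\mathcal C_3)}$, leaving precisely the form asserted in the corollary.

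I do not expect any real obstacle: the corollary is a one-line consequence of the preceding lemma, and the only two things to check are bookkeeping, namely that $e^{4.5+15}\le e^{20}$ and that the partition $\mathcal C_2\sqcup\mathcal C_3=[n]$ lets the coefficient of $y(\mathcal C_3)$ become the full sum $s$. The step most worth flagging is that this final collapse relies on fixed-point-freeness; if a fixed point were present, $x(\mathcal C_2)+x(\mathcal C_3)$ would fall short of $s$ by that point's coordinate and the clean factor $e^{-sy(\mathcal C_3)}$ would require adjustment — but that case is excluded by hypothesis here.
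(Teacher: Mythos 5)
Your proof is correct and is exactly the paper's (implicit) derivation: the corollary follows by multiplying the two bounds of \eqref{2.2} over the disjoint index sets $E_1^*$ and $E_2$, combining the constants $e^{4.5}\cdot e^{15}=e^{19.5}\le e^{20}$, and using fixed-point-freeness to write $x(\mathcal C_2)+x(\mathcal C_3)=s$ so that the last term becomes $e^{-sy(\mathcal C_3)}$. Nothing is missing; your flag about the role of fixed-point-freeness matches the standing hypothesis of Lemma \ref{lem5}.
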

\noindent Hence
\begin{equation}\label{6.69}
\begin{aligned}
\Bbb P(\Pi|\bold x)&=\int\limits_{ y_j> x_{\Pi(j)})}\!\!\!\!\!\!\Bbb P(\Pi|\bold x,\bold y)\,d\bold y\\
&\le 
c\,\exp\Bigl(-\frac{1}{2}x^2(\mathcal C_2)-x(\mathcal C_2)x(\mathcal C_3)\Bigr) \prod_{j\in \mathcal C_3}\int_{x_j}^1 e^{-s y}\,dy.
\end{aligned}
\end{equation}
The function 
\begin{equation}\label{6.7}
f(x):=\int_x^1e^{-sy}\,dy=s^{-1}(e^{-sx}-e^{-s})
\end{equation}
is log-concave since $(\log f(x))^{''}=-f^{-2}(x)e^{-s(x+1)}<0$. So, setting $x_{\text{ave}}(\mathcal C_3)=m^{-1}x(\mathcal C_3)$,
$(m:=|\mathcal C_3|\le n-2)$, we have
\begin{equation}\label{6.71}
\begin{aligned}
\prod_{j\in \mathcal C_3}\int_{x_j}^1 \!\!\!e^{-sy} dy&\le f(x_{\text{ave}}(\mathcal C_3) )^{m}
=e^{-sx(\mathcal C_3)} \!\Biggl(\!\frac{1-e^{-s(1-x_{\text{ave}}(\mathcal C_3))}}{s}\!\Biggr)^{m}\\
&\le e^{-sx(\mathcal C_3)}\!\Biggl(\!\frac{1-e^{-s}}{s}\!\Biggr)^{m}.
\end{aligned}
\end{equation}
Combining \eqref{6.69} and \eqref{6.71} we have
\begin{equation}\label{6.711}
\Bbb P(\Pi|\bold x)\le
c\,\exp\Bigl(-s^2 +\frac{1}{2}x^2(\mathcal C_2)\Bigr)\Biggl(\!\frac{1-e^{-s}}{s}\!\Biggr)^{m}.
\end{equation}
Here $x(\mathcal C_2)=\sum_{i\in \mathcal C_2}x_i=s_1$. The proof of Proposition \ref{7.5} is complete.
\end{proof}
\subsubsection{Preliminaries}To obtain a good bound for $\Bbb P(\Pi)$, we plan to integrate, with sufficient accuracy, the bound \eqref{6.711} of $\Bbb P(\Pi|\bold x)$ against  the uniform probability density on $[0,1]^n$. For this purpose, we will need the following result, cf.  \cite{Pit1}, \cite{Pit2.5} and \cite{Pit5}.
\begin{lemma}\label{lem10} Let $X_1,\dots,X_{n}$ be independent $[0,1]$-Uniforms. Let $S=\sum_{i\in [n]}X_i$, 
and $\bold V=\bigl\{V_i=X_i/S: i\in [n]\bigr\}$. Let $\bold L=\{L_i: i\in [n]\}$ be the set of lengths of the $n$ consecutive subintervals of $[0,1]$ obtained
by selecting, independently and uniformly at random, $n-1$ points in $[0,1]$ Then the joint density $g(s,\bold v)$ of $(S, \bold V)$ is given by
\begin{equation}\label{6.712}
\begin{aligned}
g(s,\bold v)&:=s^{n-1}\,\Bbb I\Bigl(\max_{i\in [n]}v_i\le s^{-1}\Bigr)\,\Bbb I(v_1+\cdots+v_{n-1}\le 1)\\
&\le \frac{s^{n-1}}{(n-1)!}\,g(\bold v);
\end{aligned}
\end{equation}
here $v_{n}:=1-\sum_{i\in [n-1]}v_i$, and $g(\bold v)=(n-1)!\,\Bbb I(v_1+\cdots+v_{n-1}\le 1)$ is the joint density of  $\bold L$.
\end{lemma}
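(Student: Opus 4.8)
The plan is to obtain $g(s,\bold v)$ by a change of variables from the joint density of $(X_1,\dots,X_n)$, which is simply the indicator of the unit cube $[0,1]^n$. First I would set $x_i=s v_i$ for all $i$, with $v_n:=1-\sum_{i<n}v_i$, so that $(s,v_1,\dots,v_{n-1})\mapsto(x_1,\dots,x_n)$ is a bijection onto its image and is the inverse of $(X_1,\dots,X_n)\mapsto(S,V_1,\dots,V_{n-1})$. The first substantive step is the Jacobian of this map. Its matrix of partials has first column $(v_1,\dots,v_{n-1},v_n)^{T}$ (the derivatives with respect to $s$), while the columns indexed by $v_1,\dots,v_{n-1}$ form $sI_{n-1}$ in rows $1,\dots,n-1$ and a row of $-s$'s in the last row. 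I would add rows $1$ through $n-1$ to the last row: because $\sum_i v_i=1$, the first entry becomes $1$ and each $s$ cancels against the corresponding $-s$, so the last row collapses to $(1,0,\dots,0)$. Expanding along it leaves a diagonal block $sI_{n-1}$, giving $|\det J|=s^{n-1}$.

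Next I would translate the constraint $X_i\in[0,1]$ into conditions on $(s,\bold v)$. Since $s>0$ and the $v_i$ are nonnegative on the simplex, $0\le s v_i\le 1$ is equivalent to $v_i\le s^{-1}$ for every $i$, that is $\max_i v_i\le s^{-1}$, while $v_n\ge 0$ is exactly $v_1+\cdots+v_{n-1}\le 1$. Multiplying the constant cube density by the Jacobian then yields
\[
g(s,\bold v)=s^{n-1}\,\Bbb I\Bigl(\max_i v_i\le s^{-1}\Bigr)\,\Bbb I(v_1+\cdots+v_{n-1}\le 1),
\]
which is precisely the stated expression for the joint density.

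It remains to identify $g(\bold v)$ and deduce the bound. The density of the spacings $\bold L$ of $n-1$ i.i.d.\ uniforms is the classical Dirichlet$(1,\dots,1)$ law, i.e.\ the constant $(n-1)!$ on the simplex $\{v_i\ge 0,\ v_1+\cdots+v_{n-1}\le 1\}$, this constant being the reciprocal of that simplex's volume; I would quote this as standard. Hence $\tfrac{s^{n-1}}{(n-1)!}\,g(\bold v)=s^{n-1}\,\Bbb I(v_1+\cdots+v_{n-1}\le 1)$, and the asserted inequality follows at once by dropping the factor $\Bbb I(\max_i v_i\le s^{-1})\le 1$ from $g(s,\bold v)$. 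The only non-routine point is the Jacobian evaluation, but the identity $\sum_i v_i=1$ makes the determinant collapse cleanly, so no real obstacle arises; the inequality itself is immediate once both densities are in hand.
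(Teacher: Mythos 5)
Your proof is correct. The paper itself does not prove Lemma \ref{lem10} at all --- it merely cites \cite{Pit1}, \cite{Pit2.5}, \cite{Pit5} --- and your argument (the change of variables $x_i=sv_i$, the row-reduction trick using $\sum_i v_i=1$ to get $|\det J|=s^{n-1}$, translation of the cube constraints into $\max_i v_i\le s^{-1}$ and $\sum_{i<n}v_i\le 1$, then dropping the indicator $\Bbb I(\max_i v_i\le s^{-1})\le 1$ against the Dirichlet$(1,\dots,1)$ spacings density) is precisely the standard derivation those references rely on, so it supplies the omitted proof correctly.
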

The identity/bound \eqref{6.712} is instrumental in combination with 
\begin{lemma}\label{lem11} {\bf (1)\/} Let $l\le n$, $t\ge 1$ and $\delta<1/2$. There exists $a=a(\delta)>0$ such that %
\begin{equation}\label{6.713}
\Bbb P\Biggl(\Biggr|\frac{n}{l}\sum_{j\in [l]}L_j\Biggr|\ge l^{-\delta}\!\Biggr)\!\le\!e^{-a\,l^{1-2\delta}}. 
\end{equation}
{\bf (2)\/} Introduce $h(\eps)=\log(1-\eps)+\eps;\,\, h(\eps)$ is negative on $(-\infty,1)$. If $m:=n-l=o(n)\to\infty$ and $\eps\in (0,1)$ is fixed, then
\begin{equation}\label{6.714}
\Bbb P\biggl(1-\Bigl(\sum_{i\in [l]}L_i\Bigr)^2\ge \frac{2(1-\eps)}{1+\eps}\bigl(m/n -0.5(m/n)^2\bigr)\biggr)\ge 1- 1.1 e^{mh(\eps)}.
\end{equation}
{\bf (3)\/} 
\begin{equation}\label{6.7145}
\Bbb P\Bigl(\max_{i\in [n]}L_i\ge 2.02\frac{\log n}{n}\Bigr)\le e^{-2.01\log n}.
\end{equation}
\end{lemma}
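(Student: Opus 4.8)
The plan is to base all three parts on the Rényi representation of uniform spacings: if $E_1,\dots,E_n$ are i.i.d.\ $\mathrm{Exp}(1)$ and $T=\sum_{i\in[n]}E_i$, then $(L_1,\dots,L_n)\stackrel{d}{=}(E_1/T,\dots,E_n/T)$. Writing $S_l=\sum_{j\le l}E_j\sim\Gamma(l,1)$ and $S_l'=T-S_l=\sum_{j>l}E_j\sim\Gamma(m,1)$ (independent, $m=n-l$), every partial sum of spacings becomes a ratio of independent Gammas, so each claim reduces to a Chernoff/large--deviation estimate for Gamma variables. Throughout I use the two--sided tail bound $\Bbb P(|S-k|\ge k\eta)\le 2e^{-c k\eta^2}$ for $S\sim\Gamma(k,1)$ and $\eta\le1/2$, with a universal $c>0$.

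\textbf{Part (1).} Since $\tfrac{n}{l}\sum_{j\le l}L_j=\dfrac{S_l/l}{T/n}$ has mean $1$, I control its deviation from $1$. On the event $\{|S_l/l-1|\le\eta\}\cap\{|T/n-1|\le\eta\}$ the ratio lies in $[\tfrac{1-\eta}{1+\eta},\tfrac{1+\eta}{1-\eta}]$, hence within $3\eta$ of $1$ once $\eta\le1/3$. Taking $\eta=\tfrac13 l^{-\delta}$, the event in \eqref{6.713} forces one of the two relative deviations to exceed $\eta$, so by the Gamma tail bound its probability is at most $2e^{-cl\eta^2}+2e^{-cn\eta^2}$. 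Using $n\ge l$ and $l\eta^2\asymp l^{1-2\delta}$ gives $\le e^{-a\,l^{1-2\delta}}$ for a suitable $a=a(\delta)>0$ (bounded $l$ are absorbed into the constant); $\delta<1/2$ is exactly what makes $l^{1-2\delta}\to\infty$.

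\textbf{Part (2).} Put $\mu=m/n$, $W=\sum_{i\le l}L_i=S_l/T$ and $U=1-W=S_l'/T$. The first step is the identity $1-(l/n)^2=2(\mu-\tfrac12\mu^2)$, which rewrites the right--hand target as $\tfrac{1-\eps}{1+\eps}\,\phi(\mu)$ with $\phi(u)=u(2-u)$; and since $1-W^2=(1-W)(1+W)=U(2-U)=\phi(U)$, the claim becomes $\phi(U)\ge \tfrac{1-\eps}{1+\eps}\phi(\mu)$. As $\phi$ is increasing on $[0,1]$ and $c:=\tfrac{1-\eps}{1+\eps}\le1$, the elementary chain $\phi(U)\ge\phi(c\mu)=c\mu(2-c\mu)\ge c\mu(2-\mu)=c\,\phi(\mu)$ shows it suffices to prove $U\ge c\mu$, i.e.\ $\dfrac{S_l'/m}{T/n}\ge\tfrac{1-\eps}{1+\eps}$; this holds whenever $S_l'\ge m(1-\eps)$ and $T\le n(1+\eps)$. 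The lower--tail Chernoff bound for $S_l'\sim\Gamma(m,1)$, optimized at tilt $\lambda=\eps/(1-\eps)$, produces exactly the exponent $\eps+\log(1-\eps)=h(\eps)$, so $\Bbb P(S_l'<m(1-\eps))\le e^{mh(\eps)}$; the upper tail for $T\sim\Gamma(n,1)$ gives $\Bbb P(T>n(1+\eps))\le e^{-n(\eps-\log(1+\eps))}$, and because $m=o(n)$ with $\eps$ fixed this is $o\!\bigl(e^{mh(\eps)}\bigr)$, hence $\le 0.1\,e^{mh(\eps)}$ for large $n$. Summing the two failure probabilities yields the $1.1\,e^{mh(\eps)}$ of \eqref{6.714}.

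\textbf{Part (3) and the main obstacle.} Each spacing is $L_i\sim\mathrm{Beta}(1,n-1)$, so $\Bbb P(L_i\ge t)=(1-t)^{n-1}$; a union bound with $\log(1-t)\le -t$ gives $\Bbb P(\max_{i\in[n]}L_i\ge t)\le n(1-t)^{n-1}\le n\,e^{-(n-1)t}$, and substituting $t=2.02\,\tfrac{\log n}{n}$ produces the polynomial bound \eqref{6.7145}. The genuinely delicate point is Part (2): one must recognize that the lower--deviation rate of a $\Gamma(m,1)$ variable is precisely $-h(\eps)$, so that the estimate emerges with the exact advertised constant, and one must route the whole ``$T$ too large'' contribution into the $0.1$ slack, which is possible only under the hypothesis $m=o(n)$. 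The accompanying algebra---the identity $1-(l/n)^2=2(\mu-\tfrac12\mu^2)$ and the monotonicity reduction through $\phi$---is what converts the awkward quadratic target into a clean one--sided Gamma bound.
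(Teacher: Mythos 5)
Your parts \textbf{(1)} and \textbf{(2)} are correct. Part \textbf{(2)} is, step for step, the paper's own Appendix proof: the same Karlin--Taylor (R\'enyi) representation $L_i\overset{d}{=}w_i/W_n$ with i.i.d.\ exponentials, the same two events $\{\sum_{i>l}w_i\ge(1-\eps)m\}$ and $\{W_n\le(1+\eps)n\}$, the same optimized Chernoff exponents $mh(\eps)$ and $nh(-\eps)$, and the same use of the monotonicity of $u\mapsto u(2-u)$ on $[0,1]$ (the paper applies it directly to the ratio $\sum_{i>l}w_i/W_n$; you route it through $U\ge\tfrac{1-\eps}{1+\eps}\,\tfrac mn$ and the function $\phi$ --- the identical computation). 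Your part \textbf{(1)} actually supplies a proof where the paper only cites \cite{Pit5}: the reduction to two Gamma tail bounds at relative accuracy $\eta=\tfrac13 l^{-\delta}$ is sound, and you correctly read the misprinted event as $\bigl|\tfrac nl\sum_{j\in[l]}L_j-1\bigr|\ge l^{-\delta}$.

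Part \textbf{(3)}, however, contains a genuine arithmetic gap. With $t=2.02\log n/n$,
\[
n\,e^{-(n-1)t}=\exp\Bigl(\log n-2.02\log n+\tfrac{2.02\log n}{n}\Bigr)=e^{-(1.02+o(1))\log n},
\]
which does not imply the asserted bound $e^{-2.01\log n}$: it exceeds it by a factor $n^{0.99+o(1)}$. So your claim that the substitution ``produces the polynomial bound'' is false as written. Moreover, no argument can close this gap, because the inequality with these constants is itself false: $\Bbb P(\max_iL_i\ge t)\ge\Bbb P(L_1\ge t)=(1-t)^{n-1}=e^{-(2.02+o(1))\log n}$, and a two-term Bonferroni estimate, using $\Bbb P(L_i\ge t,\,L_j\ge t)=(1-2t)^{n-1}$, shows that $\Bbb P(\max_iL_i\ge t)=\Theta\bigl(n^{-1.02}\bigr)$, which is asymptotically much larger than $n^{-2.01}$. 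The paper's own one-line justification (each $L_i$ distributed as $\min\{X_1,\dots,X_{n-1}\}$ plus a union bound) suffers from exactly the same mismatch, so the defect originates in the lemma's statement; still, a blind proof should detect that the numbers do not work out rather than assert that they do. The harmless repair is to raise the threshold to $3.02\log n/n$ (equivalently, to weaken the exponent to $1.01\log n$); with the larger threshold one still has $\max_i x_i=O(n^{-1/2}\log n)$ on $D_1$, so the subsequent applications of the lemma go through unchanged.
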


{\bf Notes.\/} The bounds \eqref{6.713}--\eqref{6.7145} are versions of the bounds proved in \cite{Pit5}.  An easy proof of \eqref{6.7145}
is based on the fact that each of $L_1,\dots, L_n$ is distributed as $\min\{X_1,\dots, X_{n-1}\}$. The key
element of the argument for \eqref{6.713} and \eqref{6.714} is a known result, Karlin and Taylor \cite{KarTay}: $\{L_i\}_{i\in [n]}$
has the same distribution as $\{\tfrac{w_j}{W_n}\}_{j\in [n]}$, where $w_1,\dots, w_{n}$ are independent, $\Bbb P(w_j>x)=e^{-x}$,
$W_{n}:=\sum_{j\in [n]}w_j$. For completeness, we put the proof of the bound \eqref{6.714} into Appendix.

\subsubsection{Continuing the proof of Theorem \ref{thm1}.} Recall the notations $l=|\mathcal C_2|$, $m=|\mathcal C_3|$, i.e. $l+m=n$; $s=\sum_{i\in [n]}x_i$, $v_i:=\frac{x_i}{s}$. Given $D\subset [0,1]^n$, we will denote by $S_D=S_D(\{X_{i,j}\})$ ($S_{D;\,l}$ resp.) the total number of $\Pi$'s such that $\{X_{i,\Pi(i)}\}\in D$ ($\{X_{i,\Pi(i)}\}\in D$ and $|\mathcal C_2|=l$, resp.). 
{\bf (1)\/} 
Introduce 
\[
D_1=\Bigl\{\bold x\in [0,1]^n: \max_{i\in [n]} v_i\le 2.02\frac{\log n}{n}\Bigr\}.
\]
Observe that, by \eqref{6.7145}, 
\begin{equation}\label{6.7146}
\Bbb P\Bigl(\max_{i\in [n]}L_i\ge 2.02\frac{\log n}{n}\Bigr)\le e^{-2.01\log n}.
\end{equation}
Using the notation $\Bbb P_D(\Pi)=\int_{\bold x\in D}P(\Pi|\bold x)\,d\bold x$, we have: by \eqref{6.711}, \eqref{6.712} and \eqref{6.7146},
\begin{equation}\label{6.71465}
\Bbb P_{D_1^c}(\Pi)\le ce^{-2.01\log n}\int\limits_0^\infty e^{-s^2/2}\Biggl(\!\frac{1-e^{-s}}{s}\!\Biggr)^{m}\frac{s^{n-1}}{(n-1)!}\,ds.
\end{equation}
For the integral (call it $I_1$) we have
\[
I_1\le \int\limits_0^\infty e^{-s^2/2}\,\frac{s^{\max(l-1,0)}}{(n-1)!}\,ds=O\Bigl(\frac{(l-2)!!}{(n-1)!}\Bigr), 
\]
where
\[
\nu!!=\left\{\begin{aligned}
&\prod_{\mu\in [\nu]}\mu\cdot \Bbb I\bigl(\mu=\nu (\text{mod } 0)\bigr),&& \nu>0,\\
&\,\,\,\,1,&& \nu\le 0.\end{aligned}\right.
\]
Therefore 
\begin{equation}\label{6.7147}
\Bbb P_{D_1^c}(\Pi)\le O\biggl(\frac{e^{-2.01\log n}(l-2)!!}{(n-1)!}\biggr).
\end{equation}
The total number of fixed-point-free permutations with $m=n-l$ agents in cycles of length $3$ or more is at most
$\binom{n}{m} m! (l-1)!!$. So, by \eqref{6.7147}, we have
\begin{equation}\label{6.7148}
\begin{aligned}
\Bbb E[S_{D_1^c;\,,l}]&\!=\!O\biggl(\!e^{-2.01\log n}\,\frac{\binom{n}{m}\, m!\, (l-1)!!\, (l-2)!!}{(n-1)!}\biggr)\\
&=O\Bigl(\frac{n}{l}n\,e^{-2.01\log n}\Bigr), 
\end{aligned}
\end{equation}
implying that 
\[
\Bbb E[S_{D_1^c}]=\sum_l\Bbb E[S_{D_1^c;\,l}] =O\bigl(n(\log n)e^{-2.01\log n}\bigr)=O(n^{-1}).
\]

{\bf (2)\/}  From now we consider only $\Pi$'s counted in $S_{D_1}$.
Suppose that 
\begin{equation}\label{6.71485}
l\le \frac{0.5\log^2 n}{\log\log n}.
\end{equation}
 For $\bold x\in D_1$, we have
\[
s_1:=x(\mathcal C_2)=s\sum_{i\in \mathcal C_2}v_i =O(s\eps),\quad \eps=\frac{ l \log n}{n}.
\]
For $\Pi$ counted in $S_{n;l}$, by \eqref{6.711} and \eqref{6.712}, we bound
\[
\Bbb P_{D_1}(\Pi)\le cI_2,\quad I_2:=\int\limits_{0}^{\infty}\exp\Bigl(-s^2(1-O(\eps^2))\Bigr)\biggl(\!\frac{1-e^{-s}}{s}\!\biggr)^{m}\frac{s^{n-1}}{(n-1)!}\,ds.
\]
Substitute $\eta=s(1-O(\eps^2))^{1/2}$. Since $\frac{1-e^{-z}}{z}$ decreases as $z$ increases, we obtain
\[
I_2\le (1-O(\eps^2))^{-l/2} \hat I_2,\quad \hat I_2:=\int\limits_{0}^{\infty}e^{-\eta^2}\biggl(\!\frac{1-e^{-\eta}}{\eta}\!\biggr)^{m}\frac{\eta^{n-1}}{(n-1)!}\,d\eta.
\]
The factor $(1-O(\eps^2))^{-l/2}$ is 
\[
1+O\bigl(l\eps^2 \bigr)=1+O(n^{-2}\log^8 n)\to 1.
\]
The integrand for $\hat I_2$ is at most $e^{H_{m,n}(\eta)}$, where
\begin{equation}\label{6.7149}
H_{m,n}(\eta):=-\eta^2+m\log(1-e^{-\eta}) +(n-m)\log\eta.
\end{equation}
Since
\[
H^{''}_{m,n}(\eta)=-2-\frac{m e^{\eta}}{(e^{\eta}-1)^2}-\frac{n-m}{\eta^2}<0,
\]
$H_{m,n}(\eta)$ is concave. So a stationary point of $H_{m,n}(\eta)$, if it exists, is the maximum point. A stationary point is the root of 
\[
H'_{m,n}(\eta)=-2\eta+\frac{m}{e^{\eta}-1}+\frac{n-m}{\eta}=0.
\]
Pick $\eta(\be)=\log\frac{\be m}{\log m}$. Then
\[
H_{m,n}'(\eta(\be))=-(2-\be^{-1})\log m +O\biggl(\frac{\log m}{\log\log m}\biggr).
\]
It shows that, for $n$ sufficiently large, $H_{m,n}(\eta)$ does have a stationary (whence maximum) point $\eta^*=\log\frac{\be^* m}{\log m}$, ($\be^*=\be(m,n)\in [0.49, 0.51]$, say). Therefore, by \eqref{6.7149},
\begin{align*}
H_{m,n}(\eta^*)&=-(1-o(1))\log^2m+(n-m)\log\eta^*\\
&\le -(1-o(1))\log^2m+ (1+o(1))0.5 \log^2m\\
&=-(0.5 -o(1))\log^2 n.
\end{align*}
Finally, $H^{''}_{m,n}(\eta)\le -2$, which easily leads to a bound
\[
I_3=O\bigl(\exp(H_{m,n}(\eta^*))\bigr)=O\bigl(\exp(-0.4\log^2n)\bigr).
\]
Analogously to $\Bbb E[S_{D_1^c;\,l}]$ in \eqref{6.7148}, we conclude: under the condition \eqref{6.71485},
\begin{equation}\label{6.7151}
\Bbb E[S_{D_1;\,l}]= O\bigl(\exp(-0.3\log^2 n)\bigr),\quad l\le\frac{0.5\log^2 n}{\log\log n}.
\end{equation}

Consider now a complementary case 
\begin{equation}\label{6.7152}
\ell\in \Bigl[\frac{0.5\log^2n}{\log\log n},\,n- 2 n^{1-\Delta}\Bigr].
\end{equation}
Introduce
\begin{equation}\label{6.7153}
D_2:=\Bigl\{\bold x\in D_1: \sum_{i\in \mathcal C_2} v_i\le \frac{\ell}{n}(1+\ell^{-\Delta})\Bigr\}.
\end{equation}
Observe that by \eqref{6.713}
\[
\Bbb P\Bigl(\sum_{i\in \mathcal C_2}L_i\ge \frac{\ell}{n}(1+\ell^{-\Delta})\Bigr)\le e^{-a\ell^{1-2\Delta}},\quad (\Delta<1/2).
\]
Like $\Bbb E[S_{D_1^c;\,l}]$ in \eqref{6.7148}, we have 
\begin{align*}
\Bbb E[S_{D_1\setminus D_2;\,l}]&=O\Bigl(n e^{-a\ell^{1-2\Delta}}\log n\Bigr)\\
&=O\biggl[\exp\Bigl(\log n-a\Bigl(\frac{\log^2 n}{\log\log n}\Bigr)^{1-2\Delta}\Bigr)\log n\biggr].
\end{align*}
If $\Delta<1/4$, which we assume from now, then this bound yields that 
\begin{equation}\label{6.7154}
\Bbb E[S_{D_1\setminus D_2;\,l}]\le e^{-\log^{1+\sigma}\!n},\quad \forall\,\sigma<1-4\Delta.
\end{equation}
Turn to $\Bbb E[S_{D_2;l}]$. Using \eqref{6.711}, \eqref{6.712} and the definition of $D_2$ in \eqref{6.7153}, we have:
for $\Pi$ counted in $S_{n;l}$, 
\begin{align*}
\Bbb P_{D_2}(\Pi)&\le c  \int\limits_0^{\infty}\exp\biggl(-s^2\Bigl(1-(l/n)^2(1+l^{-\Delta})^2/2\Bigr)\biggr) \frac{s^{l}}{(n-1)!}\,ds\\
&=O\Biggl(\frac{(l-2)!!}{(n-1)! \Bigl(2-(l/n)^2(1+l^{-\Delta})^2\Bigr)^{l/2}}\Biggr).
\end{align*}
To bound the $l$-dependent factor in the denominator, we use an inequality $\frac{l}{n}\le 1-l^{-\Delta}$, true for $l$ meeting the
condition  \eqref{6.7152}. (Indeed, $\phi(x):=\frac{x}{n}+x^{-\Delta}-1$ is convex; so it is negative for admissible $x$ if it is negative at both
ends of the interval in \eqref{6.7152}, which is easy to check.) By this inequality, we have
\[
\Bigl(2-(l/n)^2(1+l^{-\Delta})^2\Bigr)^{l/2}\ge \Bigl(1+\ell^{-2\Delta}\Bigr)^{\ell/2}\ge \exp\bigl( 0.4 \ell^{1-2\Delta}\bigr),
\]
whence, $\forall\,\sigma<1-4\Delta$,
\[
\Bbb P_{D_2}(\Pi)=O\biggl(\frac{(\ell-2)!!}{(n-1)! \exp\bigl( 0.4 \ell^{1-2\Delta}\bigr)}\biggr)\le \frac{(\ell-2)!!}{(n-1)!} e^{-\log^{1+\sigma}n}.
\]
Consequently $\Bbb E[S_{D_2;\,l}]\le e^{-\log^{1+\sigma} n}$, which, combined with \eqref{6.7151}, implies that, for all $l\le n-2n^{1-\Delta}$,
\begin{equation}\label{6.7155}
\Bbb E[S_{D_1;\,l}]=\Bbb E[S_{D_1\setminus D_2;\,l}]+\Bbb E[S_{D_2;l}]\le e^{-\log^{1+\sigma} n},\quad\forall\,\sigma<1-4\Delta.
\end{equation}
Therefore, for every $\Delta<1/4$, we have
\[
\sum_{l\le n-2n^{1-\Delta}}\Bbb E[S_{D_1;\,l}]\le e^{-0.9\log^{1+\sigma} n},\quad \forall\,\sigma<1-4\Delta.
\]
The proof of Theorem \ref{thm1} is complete.\\

\subsection{On the expected number of fixed pairs and the expected number of stable permutations.} 
So we have proved that the total cardinality of cycles with length $3$ or more in any $\Pi_0$-like permutation $\Pi$ is likely to be $n^{3/4+o(1)}$ at most. This bound can be
significantly improved.
\begin{theorem}\label{thm2} Let $S_n$ ($S_n^+$ resp.) denote the total number of $\Pi_0$-like permutations (the total number
of all stable permutations, resp.) and $\mathcal L_n:=\mathcal L(\Pi_0)$.
 Both $\Bbb E[S_n]$ and $\Bbb E[n-\mathcal L_n]$ are bounded as $n\to\infty$. Consequently $S_n^+$ is bounded in probability, i.e.
 $\Bbb P(S_N^+\le \omega(n))\to 0$, if $\omega(n)\to\infty$ however slowly.
\end{theorem}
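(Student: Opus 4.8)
The plan is to prove that both $\Bbb E[S_n]$ and $\Bbb E[n-\mathcal L_n]$ are bounded, and then deduce the final statement about $S_n^+$ being bounded in probability. The bulk of the work is the first two expectation bounds; the last assertion about $S_n^+$ should follow quickly from them, so I would organize the proof accordingly and treat the probabilistic conclusion last.

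For $\Bbb E[S_n]$, the natural strategy is to write $S_n=\sum_{\Pi}\Bbb I(\Pi\text{ is }\Pi_0\text{-like})$ and use $\Bbb E[S_n]=\sum_{\Pi}\Bbb P(\Pi)$, where $\Pi$ ranges over fixed-point-free permutations. Splitting by $l=|\mathcal C_2(\Pi)|$, Theorem \ref{thm1} already shows that the contribution from $l\le n-2n^{1-\Delta}$ is $o(1)$, so the entire expected mass is concentrated on $l$ very close to $n$, i.e. on permutations with only $O(n^{1-\Delta})$ agents outside matched pairs. The plan is therefore to estimate $\Bbb E[S_{D_1;\,l}]$ sharply in this remaining regime $l>n-2n^{1-\Delta}$, where $m=n-l$ is small. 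First I would insert the bound \eqref{6.711} for $\Bbb P(\Pi|\bold x)$, integrate against the density \eqref{6.712}, and combine with the counting factor $\binom{n}{m}m!(l-1)!!$ for permutations with a prescribed cycle type, exactly as in \eqref{6.7148}. The key computation is to show that $\sum_{m}\Bbb E[S_{n;n-m}]$ converges: each term should behave like a constant times a rapidly decaying function of $m$, so that summing over all small $m\ge 0$ yields $O(1)$. I expect the dominant contribution to come from $m=0$ (the genuine stable matchings) together with the small-$m$ cycle configurations, mirroring the $e^{1/2}$-type constants appearing in the earlier papers \cite{Pit2.5}, \cite{Pit5}.

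The bound on $\Bbb E[n-\mathcal L_n]$ follows by a closely related but slightly different argument. Since $n-\mathcal L_n$ counts the agents of $\Pi_0$ lying in cycles of length $\ge 3$, and these agents are common to the cycle structure outside matched pairs, I would write $\Bbb E[n-\mathcal L_n]=\sum_{m\ge 3}m\,\Bbb P(\mathcal C_3(\Pi_0)\text{ has size }m)$ and bound each probability by $\Bbb E[\#\,\Pi_0\text{-like }\Pi\text{ with }|\mathcal C_3|=m]=\sum_{l=n-m}\Bbb E[S_{n;l}]$, since any realized value of $|\mathcal C_3(\Pi_0)|=m$ forces at least one $\Pi_0$-like permutation with exactly that many agents in long cycles. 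The extra factor $m$ is harmless because the tail estimates above decay faster than any polynomial in $m$; multiplying the summand by $m$ and summing still gives $O(1)$. The main obstacle, and the step I expect to be most delicate, is obtaining the estimate on $\Bbb E[S_{n;l}]$ for small $m=n-l$ with enough precision that the sum $\sum_m \Bbb E[S_{n;n-m}]$ (and its $m$-weighted version) converges rather than merely being $o(1)$; this requires a genuine asymptotic evaluation of the integral $I_2$ (equivalently $\hat I_2$) in the small-$m$ regime, rather than the crude concavity bound used in Theorem \ref{thm1}.

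Finally, to deduce that $S_n^+$ is bounded in probability, I would argue as follows. By Corollary \ref{1}, the matched pairs of $\Pi_0$ are fixed across all stable permutations, so every stable $\Pi$ agrees with $\Pi_0$ on the $\mathcal L_n$ agents in matched pairs and can differ only on the $n-\mathcal L_n$ agents lying in cycles of length $\ge 3$. Hence $S_n^+$ is bounded by a function of $n-\mathcal L_n$ alone; since $\Bbb E[n-\mathcal L_n]=O(1)$, Markov's inequality gives $n-\mathcal L_n=O_p(1)$, and therefore $S_n^+$ is bounded in probability. Concretely, for any $\omega(n)\to\infty$ I would choose a slowly growing cutoff and split according to whether $n-\mathcal L_n$ is large: on the rare event that it is large, controlled by Markov, $S_n^+$ could be big, but this event has vanishing probability; on the complementary event, $n-\mathcal L_n$ is bounded, forcing $S_n^+$ to be bounded by a fixed constant and hence below $\omega(n)$ eventually. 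This yields $\Bbb P\bigl(S_n^+\le\omega(n)\bigr)\to 1$, completing the proof.
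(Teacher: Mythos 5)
Your skeleton matches the paper's: discard $l\le n-2n^{1-\Delta}$ via Theorem \ref{thm1}, estimate the per-$l$ expected counts for $l$ near $n$ hoping for rapid decay in $m=n-l$, and then deduce the claim on $S_n^+$ from Corollary \ref{1} (every stable permutation agrees with $\Pi_0$ on the fixed pairs, hence $S_n^+\le (n-\mathcal L_n)!$) together with Markov's inequality. That final deduction, and your Markov-type reduction of $\Bbb E[n-\mathcal L_n]$ to the $m$-weighted sum of the $\Bbb E[S_{n;n-m}]$, are sound and essentially what the paper intends.

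The gap is in the core step, where the decay in $m$ must be produced. Inserting \eqref{6.711}, integrating against \eqref{6.712}, and counting permutations ``exactly as in \eqref{6.7148}'' gives, per value of $l$, the bound $\binom{n}{m}\,m!\,(l-1)!!\cdot\frac{(l-2)!!}{(n-1)!}=\frac{n}{l}$ --- a constant per term, with no decay in $m$ at all: the factor $s^{-m}$ coming from $\bigl(\tfrac{1-e^{-s}}{s}\bigr)^m$ is exactly cancelled by the growth of the number of permutations having $m$ agents in long cycles. Summing over $m\le 2n^{1-\Delta}$ then yields only $O(n^{1-\Delta})$, not $O(1)$. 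Moreover, the fix you propose --- a sharper asymptotic evaluation of $I_2$, equivalently $\hat I_2$ --- cannot supply the decay, because those are integrals in the single variable $s$, and in the relevant regime $s\sim n^{1/2}$ the factor $(1-e^{-s})^m$ is $1-o(1)$: once the worst case $s_1\le s$ has been used in the exponent $-s^2+s_1^2/2$, no refinement of the $s$-integral produces any $m$-dependent gain. In the paper the gain comes from a two-variable mechanism, i.e.\ from the joint behavior of $s$ and $s_1$. Writing $-s^2+\tfrac{1}{2}s_1^2=-s^2\bigl(1-\tfrac12\bigl(\sum_{i\in\mathcal C_2}v_i\bigr)^2\bigr)$, the paper invokes Lemma \ref{lem11}, part (2) (the bound \eqref{6.714}, proved in the Appendix via the exponential representation of $\bold L$ and Chernoff bounds): with probability at least $1-1.1e^{mh(\eps)}$ one has $1-\bigl(\sum_{i\in\mathcal C_2}L_i\bigr)^2\ge \frac{2(1-\eps)}{1+\eps}\bigl(m/n-\tfrac12(m/n)^2\bigr)$, and since $s^2\sim n$ this becomes a factor $e^{-(1-o(1))\frac{1-\eps}{1+\eps}\,m}$; on the complementary event one falls back on the crude bound and pays only $e^{mh(\eps)}$. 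Both contributions are $e^{(1+o(1))mf(\eps)}$ with $f(\eps)=\max\bigl(-\frac{1-\eps}{1+\eps},\,h(\eps)\bigr)<0$, and this geometric decay is what makes $\sum_m \Bbb E[S_{n;n-m}]$ converge (and survive the extra factor of $m$ needed for $\Bbb E[n-\mathcal L_n]$). Without this ingredient, or an equivalent device exploiting that $s-s_1=\sum_{i\in\mathcal C_3}x_i$ is typically of order $m n^{-1/2}$, your argument does not close.
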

\begin{proof} It suffices to consider $\Pi$'s with $l=l(\Pi)\ge n-2n^{1-\Delta}$, $(\Delta<1/4)$. By \eqref{6.711}, we have: with  $s_1=\sum_{i\in [l]} x_i$,
\begin{equation}\label{6.7156}
\begin{aligned}
\Bbb P(\Pi|\bold x)&\le c s^{-m}\exp\Bigl(-s^2+\frac{1}{2}s_1^2\Bigr)\\
&=c s^{-m}\exp\biggl(-s^2\biggl(1-\frac{1}{2}\Bigl(\sum_{i\in [l]} v_i\Bigr)^2\biggr)\biggr),\quad (m=n-l).
\end{aligned}
\end{equation}
And, of course, $\Bbb P(\Pi|\bold x)\le c s^{-m} e^{-s^2/2}$. We need
 
The equations \eqref{6.711} and \eqref{6.7156}, combined with Lemma \ref{lem11},  yield
\begin{align*}
\Bbb P(\Pi)&\le \frac{c}{(n-1)!}\int\limits_{s, \bold v} s^{l-1} \exp\biggl(-s^2\biggl(1-\frac{1}{2}\Bigl(\sum_{i\in [l]} v_i\Bigr)^2\biggr)\biggr) g(\bold v)\,
ds \prod_{i\in [n-1]} dv_i\\
&\le \frac{c}{(n-1)!}\int_0^{\infty}s^{l-1}\exp\biggl[-\frac{s^2}{2}\biggl(1+\frac{2(1-\eps)}{1+\eps}\bigl(m/n -0.5(m/n)^2\bigr)\biggr)\biggr]\,ds\\
&\qquad+\frac{c}{(n-1)!} \int_0^{\infty} s^{l -1}e^{-s^2/2}\,ds\, \cdot\, O\bigl(e^{mh(\eps)}\bigr)\\
&\le\frac{c_1(l-2)!!}{(n-1)!}\cdot\biggl[\biggl(1+\frac{2(1-\eps)}{1+\eps}\bigl(m/n -0.5(m/n)^2\bigr)\biggr)^{-\frac{l}{2}}+
e^{mh(\eps)}\biggr]\\
&\le \frac{c_2 (l-2)!!}{(n-1)!} e^{(1+o(1)) m f(\eps)},\quad f(\eps):=\max\Bigl(-\frac{1-\eps}{1+\eps};\, h(\eps)\Bigr)<0.
\end{align*}
For the last step we used the bound \eqref{basic} and $l\sim n$. ($f(\eps)$ attains its minimum $\approx -0.272$ at $\eps^*\approx 0.573$,
and $e^{f(\eps^*)}\approx 0.763$.)
Let $\omega(n)\to\infty$ however slowly.  Then, using the bound above, we obtain
\begin{multline}\label{6.7157}
\sum_{l= n-2n^{1-\Delta}}^{n -\omega(n)}\,\sum_{\Pi:l(\Pi)=l}\! \Bbb P(\Pi)
\le \sum_{l= n-2n^{1-\Delta}}^{n -\omega(n)}\!\!\frac{\binom{n}{m} m! (l-1)!}{(n-1)!}e^{(f(\eps^*)+o(1)) m }\\
=\sum_{l= n-2n^{1-\Delta}}^{n -\omega(n)}\! \frac{n}{l}\, (0.77)^{n-l} 
 = (0.78)^{\omega(n)}\to 0.
\end{multline}
It follows from Theorem \ref{1} and \eqref{6.7157} that both $\Bbb E\bigl[n - \mathcal L_n\bigr]$ and $\Bbb E[S_n]$ are at most
\[
o(1) +O\biggl(\sum_{l=n-\omega(n)}^n\frac{n}{l}\biggr)=O(\omega(n)),
\]
for $\omega(n)\to\infty$ however slowly. Thus $\Bbb E\bigl[n - \mathcal L_n\bigr]=O(1)$,  $\Bbb E[S_n]=O(1)$.
\end{proof}
\subsubsection{Likely range of $s(\Pi)=\sum_{i\in [n]}X_{i,\Pi(i)}$.} The proof above demonstrated that the contribution of  stable
$\Pi$'s to $\Bbb E[S_n]$ such that $s^{-1}(\Pi)\max_{i\in [n]} X_{i,\Pi(i)}\ge 2.02 n^{-1}\log n$ or $n-\mathcal L(\Pi)\ge \omega(n)$ is vanishingly small as $n\to\infty$. Let us show also that the values of $s(\Pi)$ contributing most to $\Bbb E[S_n]$ are sharply concentrated around $n^{1/2}$. Introduce $D_3=\{\bold x\in [0,1]^n: |s-(l-1)^{1/2}|\ge (2\log n)^{1/2}\}$. Then, like several times earlier, we bound
\[
\Bbb P_{D_3^c}(\Pi)\le c\!\!\!\int\limits_{|s-(l-1)^{1/2}|\ge (2\log n)^{1/2}}\!\!\!\!\!\!\!\!\! e^{-s^2/2}\frac{s^{l-1}}{(n-1)!}\,ds.
\]
The integrand, call it $\Psi(s)$, attains its maximum at $s_{\text{max}}=(l-1)^{1/2}$,
and $(\log \Psi(s))^{''}<-1$ for all $s>0$. Using an inequality
\[
\int_{\eta \ge a} e^{-\eta^2/2}\,d\eta\le a^{-1} e^{-a^2/2},\,\,(a>0),
\]
we obtain
\begin{align*}
\Bbb P_{D_3^c}(\Pi)&\le \frac{c\,\Psi\bigl(s_{\text{max}}\bigr)}{(n-1)!}\int\limits_{|\eta|\ge (2\log n)^{1/2}}\!\!\!\!\!\!\! e^{-\eta^2/2}\,d\eta
\le \frac{c\,\Psi\bigl(s_{\text{max}}\bigr)}{n^2(n-1)!} \\
&\le \frac{c_1}{n^2(n-1)!}\int_{s\ge 0} e^{-s^2} s^{l-1}\,ds=\frac{c_1(l-2)!!}{n(n-1)!}.
\end{align*} 
(For the third inequality we used $(\log\Psi(s))^{''}\ge -5$ for $s\ge 0.5 n^{1/2}$.) So 
\begin{equation}\label{6.7158}
\Bbb E[S_{D_3^c}]=o(1)+\sum_{l\ge n-\omega(n)}\Bbb E[S_{D_3^c;\,l}]=O\bigl(n^{-1}\omega(n)\bigr).
\end{equation}

\subsection{Likely ranks of successors and predecessors.} 

Let $R_s(\Pi)$ and $R_p(\Pi)$ stand for the total rank of successors and the total rank of predecessors in a fixed-point-free stable permutation $\Pi$.
\begin{theorem}\label{thm3} With probability $\ge 1-o(1)$, we have: for all $\Pi_0$-like permutations $\Pi$,
\[
\frac{n^{3/2}}{2}\Bigl(1-2.2 n^{-1/2}\log n\Bigr)\le R_s(\Pi)\le R_p(\Pi) \le \frac{n^{3/2}}{2}\Bigl(1+2.2 n^{-1/2}\log n\Bigr).
\]
In particular, w.h.p. the total number of steps in the proposal algorithm, i.e. $R_s(\Pi_0)$, is asymptotic to $0.5 n^{3/2}$.
\end{theorem}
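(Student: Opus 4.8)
The plan is to establish the chain $\tfrac12 n^{3/2}(1-2.2n^{-1/2}\log n)\le R_s\le R_p\le\tfrac12 n^{3/2}(1+2.2n^{-1/2}\log n)$ for every $\Pi_0$-like $\Pi$ at once, via a first-moment/union-bound argument driven by the joint integral formula \eqref{4} for $P(k,\ell;\Pi)=\Bbb P(\Pi\text{ stable},R_s=k,R_p=\ell)$. The middle inequality $R_s(\Pi)\le R_p(\Pi)$ is free: by Corollary \ref{1} every $\Pi_0$-like $\Pi$ has each $i$ preferring $\Pi(i)$ to $\Pi^{-1}(i)$, so the successor rank of $i$ is at most its predecessor rank, and summing over $i$ gives $R_s\le R_p$. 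Hence it suffices to bound $R_s$ below and $R_p$ above by the stated window, and these two one-sided bounds chain through $R_s\le R_p$.

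First I would discard the negligible configurations exactly as in Sections 4.2--4.3: by \eqref{6.7148}, \eqref{6.7146} and \eqref{6.7158} the contribution to $\Bbb E[S_n]$ of permutations with $n-\mathcal L(\Pi)\ge\omega(n)$, with $\max_i v_i>2.02n^{-1}\log n$, or with $|s-(l-1)^{1/2}|\ge(2\log n)^{1/2}$ is $o(1)$. So w.h.p. every stable $\Pi_0$-like $\Pi$ lies in the good region $G$ where $m=|\mathcal C_3|=O(1)$, $s=\sqrt{l-1}\,(1+O(n^{-1/2}\sqrt{\log n}))$, and all $x_i=O(n^{-1/2}\log n)$. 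On $G$ the smallness of $x_i,z_j$ linearizes the factors in \eqref{4}: $\overline x_i\overline z_j+\xi x_i\overline z_j+\eta\overline x_iz_j=1+(\xi-1)x_i+(\eta-1)z_j+O(x_iz_j)$. Differentiating at $\xi=\eta=1$ gives conditional means $\Bbb E[R_s-n\mid\Pi]=\sum_{E_1^*\cup E_2}x_i\overline z_j/(1-x_iz_j)$ and $\Bbb E[R_p-n\mid\Pi]=\sum_{E_1^*\cup E_2}\overline x_iz_j/(1-x_iz_j)$; integrating the resulting sums against the $s$-density $\propto e^{-s^2/2}s^{l-1}$ concentrated at $s\approx\sqrt n$ yields $\sim\tfrac12(l-2)\,x(\mathcal C_2)\sim\tfrac12 n s\sim\tfrac12 n^{3/2}$, the factor $\tfrac12$ being precisely the passage from $E_1$ to its involution-transversal $E_1^*$. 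The $O(n^{-1/2}\log n)$ in the window then absorbs the spread $\sqrt{\log n}$ of $s$, the $\tfrac12 s_1^2$ correction bounded via $\max_i v_i$, and the $O(1)$ from $\mathcal C_3$.

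For the concentration I would run a Chernoff bound straight off \eqref{4}. For the upper tail of $R_p$, take $\eta=e^{t}>1,\ \xi=1$; then $\Bbb P(\Pi\text{ stable},R_p\ge(1+\delta)\mu)\le\eta^{-(1+\delta)\mu}\int_G\prod_{E_1^*\cup E_2}\!\big(1+(\eta-1)\overline x_iz_j+O(x_iz_j)\big)\,d\bold x\,d\bold y\le\Bbb P(\Pi\text{ stable})\,\exp\!\big((\eta-1)\mu-t(1+\delta)\mu\big)$, and optimizing $t$ gives the Poisson-type bound $\exp(-c\,\delta^2\mu)$; symmetrically for the lower tail of $R_s$ with $\xi=e^{-t}<1$. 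With $\mu\asymp\tfrac12 n^{3/2}$ and $\delta=2.2n^{-1/2}\log n$ this is $\exp(-\Omega(n^{1/2}\log^2 n))$, far below any power of $n$. Finally I would sum over permutations: since $\Bbb E[S_n]=O(1)$ by Theorem \ref{thm2} and the first-moment weight is carried by $G$, one gets $\sum_{\Pi\in G}\Bbb P(\Pi\text{ stable},\ R_s\text{ or }R_p\text{ outside the window})\le O(1)\cdot\exp(-\Omega(n^{1/2}\log^2 n))=o(1)$, which is the theorem.

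The step I expect to be most delicate is making the mean computation uniform and exactly $\tfrac12 n^{3/2}$ over every $\Pi\in G$. Because $E_1^*$ is defined through the labels (the condition $i<\Pi(j)$), the per-$\Pi$ sum $\sum_{E_1^*}x_i$ is label-sensitive, so I cannot replace it by $\tfrac12\sum_{E_1}x_i$ for an individual $\Pi$; the clean $\tfrac12$ surfaces only after integrating against the stability density $\prod(1-x_iz_j)$ (equivalently, after the first-moment sum over $\Pi$), and I must verify that the residual label-dependence is an $O(n^{-1/2}\log n)$ relative error fitting inside the stated window. Controlling the cross terms $E_2$ that involve $\mathcal C_3$ and the $O(x_iz_j)$ remainders in the Chernoff factors uniformly on $G$ is the remaining bookkeeping hurdle, dispatched by $\max_i v_i\le2.02n^{-1}\log n$ together with $|\mathcal C_3|=O(1)$.
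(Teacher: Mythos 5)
Your overall route is the paper's own: reduce to the good region supplied by the earlier sections (at most $\omega(n)$ agents outside fixed pairs, $|s-(l-1)^{1/2}|\le(2\log n)^{1/2}$, $\max_i x_i\le 2.03\,n^{-1/2}\log n$), then run an exponential-tilt (Chernoff) argument off the generating-function identity \eqref{4}, and finally sum the tail bounds against the $O(1)$ first moment. The only structural difference is cosmetic: you tilt $R_s$ downward and $R_p$ upward and chain through $R_s(\Pi)\le R_p(\Pi)$ (correct, immediate from the $\Pi_0$-like property), whereas the paper tilts $R_s$ in both directions and transfers to $R_p$ via $\max_\Pi|R_s(\Pi)-R_p(\Pi)|\le n\,(n-\mathcal L(\Pi_0))\le n\,\omega(n)$, which is what costs it the change of constant from $1$ to $1.1$.

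The genuine gap is in your Chernoff step, and it is exactly the point you flag in your last paragraph and then defer. The inequality $\int_G\prod\bigl(1+(\eta-1)\overline x_iz_j+O(x_iz_j)\bigr)\le \Bbb P(\Pi\text{ stable})\exp\bigl((\eta-1)\mu\bigr)$ with one constant $\mu\asymp\tfrac12 n^{3/2}$ needs the conditional mean $M(\bold x,\bold y)=\sum_{E_1^*\cup E_2}\overline x_iz_j/(1-x_iz_j)$ to be at most $\mu\bigl(1+o(\delta)\bigr)$ \emph{pointwise} on $G$. That is false. Because $E_1^*$ is a transversal cut out by the label condition $i<\Pi(j)$, one has $M\approx\sum_{E_1^*}z_j\approx\sum_k x_k\,\#\{i\in\mathcal C_2: i<k\}$, and none of the constraints defining $G$ (they involve only $s$, $\max_i v_i$, and $m$) sees the correlation between labels and values: on $G$ this sum ranges from $O(n^{3/2}/\log n)$ (mass of $\bold x$ on the largest labels) up to $(1-o(1))n^{3/2}$ (mass on the smallest labels). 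The resulting error in the tilted exponent is as large as $|\eta-1|\cdot\Theta(n^{3/2})\approx n\log n$, which swamps the gain $\Theta(n^{1/2}\log^2 n)$ the whole argument is built to extract; the failure is by a factor $n^{1/2}/\log n$ in the exponent, not by a constant.

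Your proposed repair also cannot work as stated: neither ``integrating against the stability density'' nor ``the first-moment sum over $\Pi$'' restores the factor $\tfrac12$. The stability density $\prod(1-x_iz_j)$ is label-blind at the relevant order — the bilinear sums $\sum_{E_1^*}x_iz_j$ symmetrize \emph{exactly} under $(i,j)\mapsto(\Pi(j),\Pi(i))$, which is precisely why the $\xi=\eta=1$ analysis of the earlier sections never meets this issue — and summing over $\Pi$ merely relabels the same per-$\Pi$ integral. What is actually needed is an additional high-probability constraint adjoined to $G$, e.g. $\bigl|\sum_{E_1^*}x_i-\tfrac12(l-2)s_1\bigr|\le Cn\sqrt{\log n}$ together with its analogue for $\sum_{E_1^*}z_j$: by the exponential ($w_j/W_n$) representation of the Dirichlet vector used throughout Section 4, these sums have fluctuations of order $n$ with sub-Gaussian tails, so the complement has probability $\le n^{-2}$ per permutation and contributes $o(1)$ to the first moment, while on the refined region the residual error $|\eta-1|\cdot O(n\sqrt{\log n})=O(n^{1/2}\log^{3/2}n)$ is absorbable inside the window. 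Without this step the proof does not close. I will add that you have put your finger on the thinnest point of the paper's own write-up: the paper's passage from the tilted integrand to the $(s,s_1,\xi)$-function in \eqref{6.84} likewise replaces $\sum_{E_1^*}x_i$ by $\tfrac12 (l-s_1)s_1$-type label-free expressions for $\xi\neq1$, so the same concentration input is what is required to make either argument airtight.
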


\begin{proof} {\bf (I)\/} Let $\Bbb P(k;\Pi)$ stand for the probability that $\Pi$ is stable and $R_s(\Pi)=k$.
Setting $\eta=1$  in \eqref{3} (Lemma \ref{lem6}), we obtain
\begin{equation}\label{6.8}
\Bbb P_s(k;\Pi)=\!\!\!\!\!\int\limits_{(\bold x,\bold y)\in D }\!\!\!\bigl[\xi^{k-n}\bigr]
\prod_{\{i,j\}\in E_1^*\cup E_2}\!\!\!\!\!(\overline x_i+ \xi x_i\overline z_j)\, d\bold xd\bold y.
\end{equation}
Introduce $\Bbb P_s(\ge k;\Pi)$ ($\Bbb P_s(\le k;\Pi)$, resp.) the probability that $\Pi$ is stable and $R_s(\Pi) \ge k$  ($R_s(\Pi) \le k$ resp.). From \eqref{6.8} we obtain (Chernoff-type) bounds
\begin{equation}\label{6.81}
\begin{aligned}
\Bbb P_s(\ge k;\Pi)&\le \!\!\!\!\!\int\limits_{(\bold x,\bold y)\in D }\!\!\!\inf_{\xi\ge 1} \biggl[\xi^{n-k}\cdot\!\!\!\!\!\!\!
\prod_{\{i,j\}\in E_1^*\cup E_2}\!\!\!\!\!(\overline x_i+ \xi x_i\overline z_j)\biggr]\, d\bold xd\bold y,\\
\Bbb P_s(\le k;\Pi)&\le \!\!\!\!\!\int\limits_{(\bold x,\bold y)\in D }\!\!\!\inf_{\xi\le 1} \biggl[\xi^{n-k}\cdot\!\!\!\!\!\!\!
\prod_{\{i,j\}\in E_1^*\cup E_2}\!\!\!\!\!(\overline x_i+ \xi x_i\overline z_j)\biggr]\, d\bold xd\bold y.
\end{aligned}
\end{equation}
A prospect of finding even a suboptimal $\xi$ as a function of $\bold x$ and $\bold y$ would have been a real turn-off. Fortunately by now we know that a non-zero limiting contribution to the expected number of $\Pi_0$-like stable permutations  comes exclusively from $(\Pi, \{X_{i,j}\})$'s  meeting the constraints
\begin{equation}\label{6.815}
\begin{aligned}
 &\qquad\quad n-l\le \omega(n),\quad l=\big|\{i\in [n]:\,i\in \mathcal C_2(\Pi)\}\big|,\\
 & |s(\Pi)-(l-1)^{1/2}|\le (2\log n)^{1/2},\quad\max_{i\in [n]} X_{i,\Pi(i)}\le 2.03 n^{-1/2}\log n,
\end{aligned}
 \end{equation}
$\omega(n)\to\infty$ however slowly.
We recall also that our argument used
the exponential bounds for the integrand in Lemma \ref{lem5} and its descendants, and it led  us to an integrand dependent
on $s=\sum_{i\in [n]}x_i$ and $s_1=\sum_{i\in [l]}x_i$ only. It turns out that the same reduction works for the integrands in \eqref{6.81}, if we lower our sights and look for a suboptimal
$\xi$ among functions of $s$ and $s_1$.  So we may consider those $(\Pi, \{X_{i,j}\})$'s only, in which case the range $D$ in \eqref{6.81}
gets replaced with 
\[
D^*:=\Bigl\{\bold x\in D: |s-(l-1)^{1/2}|\le (2\log n)^{1/2};\,\,\,\max_{i\in [n]} x_i\le 2.03 n^{-1/2}\log n\Bigr\}, 
\]
and the LHS's  in \eqref{6.81} become $\Bbb P_s^*(\ge k;\Pi)$ ($\Bbb P_s^*(\le k;\Pi)$,
resp.), the probability that $\Pi$ is stable, $(\Pi, \{X_{i,j}\})$ meets the constraint \eqref{6.815}, and $R_s(\Pi)\ge k$ ($R_s(\Pi)\le k$, resp.). \\

Observe that
\[
\overline x_i+ \xi x_i\overline z_j=1-x_i  z_j (\xi), \quad z_j(\xi):=1-\xi\, \overline z_j=\left\{\begin{aligned}
&1-\xi\,\overline x_j,&&i\in \mathcal C_2,\\
&1-\xi\,\overline y_{\Pi(j)},&&j\in \mathcal C_3.\end{aligned}\right.
\]
Here $\max_i x_i\le 2.03 n^{-1/2}\log n\to 0$, and $\max_j |z_j(\xi)|\le 1+\xi$ is bounded as $n\to\infty$, if we impose the condition $\xi\le 2$, say. So (see the note following the proof of Lemma \ref{lem6}) analogously to Corollary \ref{cor2}, we have
\begin{multline}\label{6.82}
\prod_{\{i,j\}\in E_1^*\cup E_2}\!\!\!\!\!(\overline x_i+ \xi x_i\overline z_j)
\le c \exp\biggl[-\frac{1}{2}\biggl(\sum_{i\in \mathcal C_2} x_i\biggr)\biggl(\sum_{j\in \mathcal C_2}(1-\xi\, \overline x_j)\biggr)\\
-\biggl(\sum_{i\in \mathcal C_3}x_i\biggr)\biggl(\sum_{j\in \mathcal C_2}(1-\xi\,\overline x_j)\biggr) -s\sum_{j\in \mathcal C_3}
(1-\xi\,\overline y_j)\biggr],
\end{multline}
where $s=\sum_{i\in [n]}x_i$. Given $\bold x$, we integrate this inequality for $y_j\ge x_{\Pi(j)}$. Leaving
$\bold x$-dependent factors aside, and recalling the notation $m= |\mathcal C_3|$, we compute
\begin{multline}\label{6.82}
\prod_{j\in \mathcal C_3}\int_{x_{\Pi(j)}}^1 e^{-s(1-\xi +\xi y)}\,dy=e^{-ms(1-\xi)}\prod_{j\in \mathcal C_3}\frac{e^{-s \xi x_{\Pi(j)}}
-e^{-s\xi}}{s\xi}\\
\le e^{-ms(1-\xi)}\biggl(\frac{e^{-s \xi x_{\text{ave}}(\mathcal C_3)}-e^{-s\xi}}{s\xi}\biggr)^m\\
\le \exp\biggl(-ms(1-\xi)-s\xi\sum_{j\in \mathcal C_3}x_j\biggr)\biggl(\frac{1-e^{-s\xi}}{s\xi}\biggr)^m.
\end{multline}
Putting together \eqref{6.81} and \eqref{6.82} and doing a simple algebra we get 
\begin{equation}\label{6.83}
\int\limits_{y_j\ge x_{\Pi(j)}} \prod_{\{i,j\}\in E_1^*\cup E_2}\!\!\!\!\!(\overline x_i+ \xi x_i\overline z_j)\,d\bold y \le c\exp\bigl(H_k(\bold x,\xi)\bigr).
\end{equation}
Here
\begin{multline}\label{6.84}
H_k(\bold x,\xi)=\frac{1}{2}\biggl(\sum_{i\in \mathcal C_2} x_i\biggr)\!\biggl(\sum_{j\in \mathcal C_2}(1-\xi\, \overline x_j)\!\biggr)\\
-s\sum_{i\in [n]}(1-\xi\,\overline x_i)-m\log(s\xi)-(k-n)\log\xi\\
=\frac{1}{2} s_1\bigl(l-\xi(l-s_1)\bigr) -s\bigl(n-\xi(n-s)\bigr)\\
-m\log(s\xi)-(k-n)\log\xi,
\end{multline}
the bottom expression being a function of $s$, $s_1$ and $\xi$ only, call it $h_k(s,s_1,\xi)$. As a function of $\xi$, $h_k$ has a single
stationary point $\xi(s,s_1)$, the root of $(h_k)'_{\xi}(s,s_1,\xi)=0$, given by
\begin{equation*}
\xi_k(s,s_1)=\frac{k+m-n}{s(n-s)-\frac{1}{2} s_1(l-s_1)}.
\end{equation*}
Since $(h_k)^{''}_{\xi}=(k+m-n)/\xi^2>0$, $\xi_k(s,s_1)$ is a unique minimum point of $h_k(s,s_1,\xi)$, given $s$ and $s_1$. Since $n-l$, $s-s_1$ are of order $O(\omega)$, and $|s -( l-1)^{1/2}|\le (2\log n)^{1/2}$, we see that $\xi_k(s,s_1)$ is asymptotic to $2(k+m-n)/ n^{3/2}$. This is a strong indication that the likely values $k$
of $R_s(\Pi)$ are relatively close to $0.5n^{3/2}$, as they make Chernoff-type parameter $\xi_k$ asymptotic to $1$.

Let us make this rigorous. Consider $k\in [k_1,k_2]$, $k_1=\lfloor 0.5 n^{3/2}-n\log n\rfloor$, $k_2=\lfloor 0.5 n^{3/2}+n\log n\rfloor$. It is
easy to check that
\[
\xi_k(s,s_1)=\frac{2k}{n^{3/2}}\bigl(1+O(n^{-1/2}\log^{1/2} n)\bigr),\quad 1-\xi_k(s,s_1)=O\bigl(n^{-1/2}\log n\bigr),
\]
uniformly for $k$ and $l,\,s,\,s_1$ in question. (In particular, $\xi_k(s,s_1)\le 2$, i.e. bounded as $n\to\infty$.) To evaluate $h_k(s,s_1,\xi_k)$, we write
\begin{align*}
h_k(s,s_1,1)&=h_k(s,s_1,\xi_k)+(h_k)'_{\xi}(s,s_1,\xi_k)(1-\xi_k)\\
&+\frac{(h_k)''_\xi(s,s_1,\xi_k)}{2} (1-\xi_k)^2 +O\bigl(k(1-\xi_k)^3\bigr)\\
&=h_k(s,s_1,\xi_k)+\frac{k+m-n}{2\xi_k^2}(1-\xi_k)^2+O\bigl(k(1-\xi_k)^3\bigr)\\
&\ge h_k(s,s_1,\xi_k)+0.4 k(1-\xi_k)^2 +O\bigl(\log^3n\bigr),
\end{align*}
implying a bound
\begin{equation}\label{6.85}
\begin{aligned}
h_k(s,s_1,\xi_k)&\le h_k(s,s_1,1)-0.4k(1-\xi_k)^2+O\bigl(\log^3n\bigr)\\
&=\frac{s_1^2}{2}-s^2-m\log s-0.4k(1-\xi_k)^2+O\bigl(\log^3n\bigr)\\
&\le -\frac{s^2}{2}-m\log s-0.4k(1-\xi_k)^2+O\bigl(\log^3n\bigr).
\end{aligned}
\end{equation}
We hasten to add that here $\xi_k$ still depends on $s$ and $s_1$. However
\begin{align*}
1-\xi_{k_1}(s,s_1)&=1-\frac{2k_1}{n^{3/2}}\bigl(1+O(n^{-1/2}\log^{1/2} n)\bigr)\\
&=1-\frac{2k_1}{n^{3/2}} +O(n^{-1/2}\log^{1/2} n)\\
&\ge n^{-1/2}\log n+O(n^{-1/2}\log^{1/2}n)\ge 0.9 n^{-1/2}\log n, 
\end{align*}
so $\xi_{k_1}(s,s_1)<1$, in particular. Similarly, $\xi_{k_2}(s,s_1)-1\ge 0.9 n^{-1/2}\log n$. Therefore
\[
h_{k_i} \bigl(s,s_1,\xi_{k_i}(s,s_1)\bigr)\le -\frac{s^2}{2}-m\log s-\frac{1}{6}n^{1/2}\log^2n, \quad (i=1,2).
\]
Having gained this extra term $-\frac{1}{6}n^{1/2}\log^2n$, we extend the integration with respect to $\bold x$ from $D^*$ to the whole $D$ and obtain
\begin{align*}
\max\bigl\{\Bbb P_s^*(\le k_1;\Pi),\,\Bbb P_s^*(\ge k_2;\Pi)\bigr\}&\le \frac{\exp(-\tfrac{1}{6}n^{1/2}\log^2n)}{(n-1)!}\int_0^{\infty} s^{l-1} e^{-s^2/2}\,ds\\
&=\frac{\exp(-\tfrac{1}{6}n^{1/2}\log^2n)(l-2)!!}{(n-1)!}.
\end{align*}
So (with $m:=n-l$) the expected number of all, but collectively negligible, $\Pi$'s such that $R_s(\Pi)\ge k_2$ ($R_s(\Pi)\le k_1$, resp.) is of order
\begin{align*}
&\exp(-\tfrac{1}{6}n^{1/2}\log^2n)\sum_{l\ge n-\omega(n)}\frac{\binom{n}{m} m! (l-1)!! (l-2)!!}{(n-1)!}\\
&=\exp(-\tfrac{1}{6}n^{1/2}\log^2n)\sum_{l\ge n-\omega(n)}\frac{n}{l}\le \exp(-\tfrac{1}{6.1}n^{1/2}\log^2n).
\end{align*}
Therefore, with probability $\ge 1-o(1)$, we have
\[
\min_{\Pi}R_s(\Pi)\ge \lfloor 0.5 n^{3/2}-n\log n\rfloor,\quad \max_{\Pi}R_s(\Pi)\le \lfloor 0.5 n^{3/2}+n\log n\rfloor.
\]
{\bf (II)\/} Finally, for each agent $i$ from the fixed pairs $\Pi(i)=\Pi^{-1}(i)$, and with probability $1-o(1)$ there are at most $\omega(n)$ agents outside
the fixed pairs, whence $\max_{\Pi}|R_s(\Pi)-R_p(\Pi)|\le n (n- \mathcal L(\Pi_0))\le n\omega(n)$.  Therefore 
\[
\min_{\Pi}R_p(\Pi)\ge \lfloor 0.5 n^{3/2}-1.1n\log n\rfloor,\quad \max_{\Pi}R_p(\Pi)\le \lfloor 0.5 n^{3/2}+1.1n\log n\rfloor
\]
with probability $\ge 1-o(1)$ as well.
\end{proof}

{\bf Appendix.\/}  {\it Proof of the bound \eqref{6.714}.\/} Let $\bold L=\{L_1,\dots, L_n\}$ be the lengths of the consecutive subintervals of $[0,1]$
obtained by throwing in uniformly and independently $n-1$ points into $[0,1]$. Let $m:=n-l=o(n)\to\infty$, and let $\eps\in (0,1)$ be
fixed. We need to show that for large $n$
\[
\Bbb P\biggl(1-\biggl(\sum_{i=1}^{l}L_i\biggr)^2\ge \frac{2(1-\eps)}{1+\eps}\bigl(m/n-0.5(m/n)^2\bigr)\biggr)\ge 1 -1.1\, e^{mh(\eps)},
\]
where $h(\eps):=\log(1-\eps)+\eps$. We use the fact that $\bold L \overset{\Cal D}{\equiv}\biggl\{\frac{w_j}{W_n}\biggr\}_{i\in [n]}$,
where $w_1,\dots,w_n$ are independent exponentials and $W_n:=\sum_{i\in [n]}w_i$. If $\sum_{i=l+1}^n w_i\ge (1-\eps)m$
and $W_n\le (1+\eps)n$, then (since $(2-\eta)\eta$ increases on $[0,1]$) we have
\begin{multline*}
1-\biggl(\sum_{i=1}^{l}L_i\biggr)^2\overset{\Cal D}{\equiv}\biggl(2-\frac{\sum_{i=l+1}^n w_i}{W_n}\biggr)\frac{\sum_{i=l+1}^n w_i}{W_n}\\
\ge \biggl(2-\frac{(1-\eps)m}{(1+\eps)n}\biggr)\frac{(1-\eps)m}{(1+\eps)n}\\
\ge \frac{2(1-\eps)}{1+\eps}\bigl(m/n- 0.5(m/n)^2\bigr).
\end{multline*}
Now using $\Bbb E[e^{zw}]= (1-z)^{-1}$, $(z<1)$, and Chernoff-type bounds
\[
\Bbb P\biggl(\sum_{j\in [\nu]} w_j \le a\biggr)\le \min_{z\le 0}\frac{E^{\nu}[e^{zw}]}{e^{a\nu z}},\quad
\Bbb P\biggl(\sum_{j\in [\nu]} w_j \ge b\biggr)\le \min_{z\ge 0}\frac{E^{\nu}[e^{zw}]}{e^{b\nu z}},
\]
we obtain
\begin{align*}
\Bbb P\biggl(\sum_{i=l+1}^n w_i\le (1-\eps)m\biggr)\le e^{m\, h(\eps)},\quad\Bbb P\biggl(\sum_{i=1}^n w_i\ge (1+\eps)n\biggr)
\le e^{n\, h(-\eps)}.
\end{align*}
It remains to notice that
\begin{multline*}
\Bbb P\biggl(\sum_{i=l+1}^n w_i\ge (1-\eps)m;\,\,\sum_{i=1}^n w_i\le (1+\eps)n\biggr)\\
\ge \Bbb P\biggl(\sum_{i=l+1}^n w_i\ge (1-\eps)m\biggr)-\Bbb P\biggl(\sum_{i=1}^n w_i\ge (1+\eps)n\biggr)\\
\ge 1- e^{m\, h(\eps)} -e^{n\, h(-\eps)}\ge  1 -1.1\, e^{mh(\eps)}.
\end{multline*}
\end{document}